\renewcommand{\mod}[1]{{\ifmmode\text{\rm\ (mod~$#1$)}\else\discretionary{}{}{\hbox{ }}\rm(mod~$#1$)\fi}}
\newcommand{\ep}{\varepsilon}
\newcommand{\disc}{\mathop{\rm Disc}}
\newcommand{\res}{\mathop{\rm Res}}
\newcommand{\lcm}{\mathop{\rm lcm}}
\newcommand{\A}{{\mathcal A}}
\newcommand{\B}{{\mathcal B}}
\newcommand{\C}{{\mathcal C}}
\newcommand{\D}{{\mathcal D}}
\newcommand{\F}{{\mathbb F}}
\newcommand{\M}{{\mathcal M}}
\newcommand{\OO}{{\mathcal O}}  
\renewcommand{\P}{{\mathcal P}}
\newcommand{\Q}{{\mathbb Q}}
\renewcommand{\S}{{\mathcal S}}
\newcommand{\Z}{{\mathbb Z}}
\newtheorem{theorem}{Theorem}[section]
\newtheorem{proposition}[theorem]{Proposition}
\newtheorem{corollary}[theorem]{Corollary}
\newtheorem{lemma}[theorem]{Lemma}
\newtheorem{conjecture}[theorem]{Conjecture}
\newtheorem{definition}[theorem]{Definition}
{
\theoremstyle{remark}
\newtheorem*{remark}{Remark}
}
\begin{document}

\title{Squarefree values of trinomial discriminants} 
\author{David W.\ Boyd}
\address{Department of Mathematics \\ University of British Columbia \\ Room
121, 1984 Mathematics Road \\ Vancouver, BC, Canada V6T 1Z2}
\email{boyd@math.ubc.ca}
\author{Greg Martin}
\address{Department of Mathematics \\ University of British Columbia \\ Room
121, 1984 Mathematics Road \\ Vancouver, BC, Canada V6T 1Z2}
\email{gerg@math.ubc.ca}
\author{Mark Thom}
\address{Department of Mathematics \& Computer Science \\ University of Lethbridge \\ C526 University Hall \\ 4401 University Drive \\ Lethbridge, AB, Canada T1K 3M4}
\email{markjordanthom@gmail.com}
\subjclass[2010]{Primary 11N32, 11N25, 11N05; secondary 11R29.}
\maketitle

\section{Introduction}

The prime factorization of the discriminant of a polynomial with integer coefficients encodes important arithmetic information about the polynomial, starting with distinguishing those finite fields in which the polynomial has repeated roots. One important datum, when the monic irreducible polynomial $f(x)$ has the algebraic root $\theta$, is that the discriminant of $f(x)$ is a multiple of the discriminant of the number field $\Q(\theta)$, and in fact their quotient is the square of the index of $\Z[\theta]$ in the full ring of integers $\OO$ of $\Q(\theta)$. In particular, if the discriminant of $f(x)$ is squarefree, then $\OO=\Z[\theta]$ is generated by the powers of the single element $\theta$ (see \cite[solution to Exercise 4.2.8, page 210]{esmondemurty}) and is thus said to be ``monogenic''. (Of course the discriminant being squarefree is not necessary for the ring of integers to be monogenic---it is simply a convenient sufficient condition.) The rings of integers $\OO$ in such fields are well suited to computation, all the more so when the polynomial $f(x)$ is particularly simple.

These considerations motivated us to consider trinomials such as $x^n-x-1$, the discriminant of which (see Lemma~\ref{disc to D lemma}) is $n^n + (-1)^n (n-1)^{n-1}$. Indeed, $x^n-x-1$ is always irreducible (see Lemma~\ref{ljunggren lemma}), and its Galois group is always $S_n$~\cite[Theorem 1]{Os}. Lagarias~\cite{lagarias} asked whether, for each positive integer $n$, there is an irreducible polynomial of degree $n$ with Galois group $S_n$ for which the ring of integers of the field generated by one of its roots is monogenic. By the above discussion, we can answer Lagarias's question in the affirmative for any integer $n\ge2$ for which $n^n + (-1)^n (n-1)^{n-1}$ is squarefree. (As it happens, his question was answered positively for all $n$ by Kedlaya~\cite{kedlaya}, but by then our investigation into the squarefreeness of these values had yielded mathematics that was independently interesting.)

Most of the integers $n^n + (-1)^n (n-1)^{n-1}$ seem squarefree, but there are a few sporadic exceptions, the first being $130^{130}+129^{129}$ which is divisible by $83^2$; the other exceptions for $n\le1000$ are $n\in\{257, 487, 528, 815, 897\}$, each of which has $n^n + (-1)^n (n-1)^{n-1}$ divisible by $59^2$. We remark that it is easy to test the other $994$ values for divisibility by the squares of specific primes (and we have done so for the first ten thousand primes), making it extremely likely that they are indeed squarefree; but this sequence of integers grows so quickly that only the first few dozen values are verifiably squarefree. Nevertheless, we believe that the squarefree values in this sequence have a limiting density which we can calculate extremely accurately, despite the very limited data.
 
\begin{conjecture} \label{squarefree conjecture}
The set of positive integers $n$ such that $n^n+(-1)^n(n-1)^{n-1}$ is squarefree has density $0.9934466\dots$, correct to that many decimal places.
\end{conjecture}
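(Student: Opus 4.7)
My plan is to express the conjectural density as an Euler product $\prod_p \delta_p$, where
\[
\delta_p := \lim_{N\to\infty} \frac{\#\{n\le N : p^2 \nmid D_n\}}{N}
\]
is the local density of integers whose value $D_n := n^n + (-1)^n(n-1)^{n-1}$ is not divisible by $p^2$, and then to evaluate this product numerically.

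\textbf{Step 1 (Periodicity of $D_n \bmod p^2$).} For each odd prime $p$, I would show that $D_n \pmod{p^2}$ depends only on $n$ modulo $M_p := p^2(p-1)$. When $\gcd(n,p)=1$, the residue $n^n \pmod{p^2}$ depends on the base modulo $p^2$ and on the exponent modulo $\phi(p^2) = p(p-1)$, so by CRT it depends on $n \pmod{p^2(p-1)}$. When $p \mid n$ and $n \ge 2$, the valuation $v_p(n^n) \ge n \ge 2$ is automatic, so $n^n \equiv 0 \pmod{p^2}$. The same reasoning applies to $(n-1)^{n-1}$, and $(-1)^n$ has period $2 \mid M_p$ for odd $p$. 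A short case check modulo $4$ shows that $4 \nmid D_n$ for all $n \ge 3$, so $\delta_2 = 1$ and the product effectively runs over odd primes only. Hence $\delta_p = 1 - B_p/M_p$, where $B_p := \#\{a \in \Z/M_p\Z : p^2 \mid D_a\}$.

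\textbf{Step 2 (Evaluation of $\delta_p$).} For small $p$ one computes $B_p$ directly; a short calculation gives $3 \nmid D_n$ for all $n \ge 2$, whence $\delta_3 = 1$, and most small primes turn out to have small $B_p$, consistent with the empirical defect for $n \le 1000$ arising only from $p = 59$ and $p = 83$. For larger $p$, I would use Lemma~\ref{disc to D lemma} to recognize $p \mid D_n$ as the existence of a repeated root $\alpha \in \overline{\F_p}$ of $x^n - x - 1$; combining $\alpha^n \equiv \alpha + 1$ with $n\alpha^{n-1} \equiv 1 \pmod p$ forces $\alpha \equiv -n(n-1)^{-1} \pmod p$ and reduces the count of bad $n$ modulo $M_p$ to an explicit polynomial congruence together with its Hensel lift modulo $p^2$. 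The resulting bound $1 - \delta_p = O(1/p^2)$ makes $\prod_p \delta_p$ converge absolutely, and truncating at a cutoff $Y$ with $\sum_{p > Y}(1 - \delta_p) \ll 1/Y$ pins down the value $0.9934466\dots$ to the claimed precision.

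\textbf{Main obstacle.} The principal difficulty is justifying that the Euler product actually equals the density. A rigorous argument would require analyzing the sieve identity
\[
\#\{n \le N : D_n \text{ squarefree}\} = \sum_{d \ge 1} \mu(d)\,\#\{n \le N : d^2 \mid D_n\}
\]
and bounding the tail $d > \log N$, say; but $D_n$ grows super-exponentially in $n$, so genuinely huge primes can divide $D_n$ and their average squared contributions are not controlled by any unconditional technique. Such tails are believed to require $abc$-type input even for polynomial sequences, and no such result is available for super-exponentially growing ones. This is precisely why the statement is presented as a conjecture, and why the present proposal is fundamentally a heuristic derivation of the predicted density, buttressed by the modest numerical evidence at $p = 59, 83$.
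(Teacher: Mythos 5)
Your overall strategy --- local densities at each prime, a sieve identity, and an honest admission that the tail of the sieve cannot be controlled unconditionally --- is the same in spirit as the paper's Section~\ref{density section}, and your closing paragraph correctly identifies why the statement must remain a conjecture (the paper only proves the one-sided bound of Proposition~\ref{rigorous density prop}). However, there are two concrete gaps between your sketch and what is needed to produce the seven digits $0.9934466\dots$. The first is the Euler product ansatz: the density is \emph{not} $\prod_p \delta_p$. The event ``$p^2 \mid D_n$'' is a union of residue classes modulo $p(p-1)$ (the paper's Lemma~\ref{residue classes mod pp-1 lemma} gives this period, sharper than your $p^2(p-1)$), and the moduli $p(p-1)$ for different primes are far from coprime, so the local events are not independent: a class modulo $p(p-1)$ and a class modulo $q(q-1)$ intersect either in nothing or in a set whose density is $\gcd(p(p-1),q(q-1))$ times the product of the individual densities. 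At the claimed precision of $10^{-7}$ these correlations cannot be waved away; the paper instead uses the Bonferroni inequalities (Proposition~\ref{inclusion-exclusion prop}) with explicitly computed pairwise correction sets $\D_{p,q}$, reserving the independence heuristic for the average-case tail over primes $p>10^6$.

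The second gap is that Step 2 is not executable as written. The assertion $1-\delta_p = O(1/p^2)$ is not a bound --- $\#\C_p$ can be large for individual primes (the paper records up to $30$ sporadic roots of $f_p$) --- but only a conjectural \emph{average} (the paper's Conjecture~\ref{Cp average conjecture}), which the paper justifies via the six-pack/Poisson heuristic for the roots of $f_p(x)=((x+1)^p-x^p-1)/p$; some such heuristic input is indispensable for estimating the tail. Moreover, computing $B_p$ for every prime up to a cutoff large enough to fix seven digits still requires, per prime, a search over $\sim p^2$ residue classes of $n$ unless you invert the relationship between $n$ and the forced repeated root $\alpha\equiv -n(n-1)^{-1}$; the germ of that inversion is present in your repeated-root analysis, but carrying it out is exactly the content of the paper's bijection (Theorem~\ref{correspondence theorem}) between residue classes $n$ with $p^2\mid D_\ep(n,m)$ and pairs of consecutive $p$th powers modulo $p^2$, which reduces the work per prime to finding the (on average $O(1)$) roots of $f_p$. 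Without that device the numerical evaluation underlying the quoted digits is computationally out of reach.
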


\noindent In Proposition~\ref{rigorous density prop} we obtain the rigorous upper bound $0.99344674$ for this density, and the rest of Section~\ref{density section} contains our reasoning for the conjecture as stated. 

We can show that only certain primes have the property that their squares can divide an integer of the form $n^n+(-1)^n(n-1)^{n-1}$; indeed, $59$, $79$, and $83$ are the smallest primes with this property. It turns out the theoretical investigation of primes with this property is even tidier if we widen slightly the class of primes. Given $\ep\in\{-1,1\}$ and positive integers $n>m$, define
\begin{equation}  \label{D def}
D_\ep(n,m) = n^n + \ep(n-m)^{n-m}m^m.
\end{equation}
These quantities are closely related to discriminants of trinomials of the form $x^n\pm x^m\pm1$. We note for future use that if a prime $p$ divides $D_\ep(n,m)$, then $p$ divides either all of $n$, $m$, and $n-m$ or none of them.

Now define
\begin{equation}  \label{Pep def}
\P_\ep = \{ p \mbox{ prime}\colon \text{there exist positive integers $n,m$ with $p \nmid m$ such that } p^2 \mid D_\ep(n,m) \},
\end{equation}
the restriction $p\nmid m$ being present to avoid high powers of $p$ dividing $D_\ep(n,m)$ for trivial reasons. (We will also write $\P_+$ for $\P_1$ and $\P_-$ for $\P_{-1}$, and similarly for $D_+(n,m)$ and $D_-(n,m)$.) We saw earlier, for example, that $83\in D_+(n,m)$ and $59\in D_\ep(n,m)$ for both $\ep\in\{-1,1\}$. The smallest prime in both $\P_+$ and $\P_-$ turns out to be $7$, as witnessed by $49$ dividing both $D_+(5,1)=5^5+4^4$ and $D_-(10,2)=10^{10}-8^82^2$.

A set of primes with a different definition will also be relevant to this story: define
\begin{equation}  \label{Pcons def}
\P_{cons} = \{ p \mbox{ prime}\colon \mbox{there exist consecutive nonzero $p$th powers modulo } p^2 \}.
\end{equation}
One way to look at $\P_{cons}$ is as a vast generalization of Wieferich primes, that is, primes $p$ for which $2^{p-1}\equiv1\mod{p^2}$. Indeed, if $p$ is a Wieferich prime, then $1^p\equiv1\mod{p^2}$ and $2^p\equiv2\mod{p^2}$ are consecutive nonzero $p$th powers modulo~$p^2$. The smallest prime in $\P_{cons}$ turns out to be $7$, as witnessed by $2^7\equiv30\mod{49}$ and $3^7\equiv31\mod{49}$.

The introduction of $\P_{cons}$ might seem unmotivated from our discussion of trinomial discriminants; in fact, it is extremely relevant, as the following surprising theorem (established in Section~\ref{correspondence section}) demonstrates.

\begin{theorem} \label{same sets of primes}
We have $\P_+ = \P_- = \P_{cons}$.
\end{theorem}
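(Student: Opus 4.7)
My plan is to prove both inclusions $\P_\ep \subseteq \P_{cons}$ and $\P_{cons} \subseteq \P_\ep$ for each $\ep \in \{+1,-1\}$ by way of a Teichm\"uller-style decomposition. The group $(\Z/p^2\Z)^*$ factors canonically as $H \times U$, where $H$ is the order-$(p-1)$ subgroup of $p$th powers and $U = 1 + p\Z/p^2\Z$ is the order-$p$ subgroup of $1$-units; write $\omega(a) = a^p \bmod p^2$ for the Teichm\"uller lift of $a \in (\Z/p\Z)^*$ and $q_p(n) = (n^{p-1}-1)/p$ for the Fermat quotient. A direct computation using $n^{p-1} \equiv 1 + p\,q_p(n) \pmod{p^2}$ shows that, for $p \nmid n$, writing $a = n \bmod p$, $s = n \bmod (p-1)$, and $k = (n-a)/p$,
\[
n^n \equiv \omega(a)^s\bigl(1 + p\,(k - a\,q_p(a))\bigr) \pmod{p^2},
\]
which neatly displays $n^n$ in $H \times U$. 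Applying this formula to each of $n$, $m$, and $n-m$ (with parallel notation $c,t,\ell$ for $m$ and $b,u,j$ for $n-m$), and then separately equating $H$-parts and $U$-parts in the congruence $n^n \equiv -\ep\,(n-m)^{n-m}m^m \pmod{p^2}$, decouples the single condition $p^2 \mid D_\ep(n,m)$ into an $\ep$-dependent Teichm\"uller congruence
\[
\text{(P1)} \qquad a^s \equiv -\ep\,b^u c^t \pmod p
\]
and an $\ep$-independent $1$-unit congruence
\[
\text{(P2)} \qquad k - j - \ell \equiv a\,q_p(a) - b\,q_p(b) - c\,q_p(c) \pmod p.
\]

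For the direction $\P_\ep \subseteq \P_{cons}$, I would simplify (P2). The carry $k - j - \ell$ equals $0$ when $b + c \le p - 1$ (so $a = b + c$) and equals $1$ when $b + c \ge p + 1$ (so $a = b + c - p$); using the translation identity $q_p(y - p) \equiv q_p(y) + 1/y \pmod p$, both cases collapse into the clean congruence
\[
(b+c)\,q_p(b+c) \equiv b\,q_p(b) + c\,q_p(c) \pmod p
\]
(where $b+c$ is now the integer sum, possibly exceeding $p$). The elementary identity $(b+c)^p - b^p - c^p \equiv p\bigl[(b+c)q_p(b+c) - b\,q_p(b) - c\,q_p(c)\bigr] \pmod{p^2}$ then rewrites this as $(b+c)^p \equiv b^p + c^p \pmod{p^2}$; dividing by $c^p$ exhibits $(b/c)^p$ and $(b/c + 1)^p$ as consecutive nonzero $p$th powers modulo $p^2$, giving $p \in \P_{cons}$.

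Conversely, given a $\P_{cons}$ witness $x \in \{1,\dots,p-2\}$ with $(x+1)^p \equiv x^p + 1 \pmod{p^2}$, set $b = x$, $c = 1$, so that (P2) is automatic. For $\ep = -1$ the choice $s = t = u = 0$ (i.e., $n \equiv m \equiv 0 \pmod{p-1}$) trivially verifies (P1), and CRT produces positive integers $(n,m)$ with $n \equiv x+1 \pmod p$, $m \equiv 1 \pmod p$, $n > m$, and $p \nmid m$, giving $p \in \P_-$. For $\ep = +1$ the condition (P1) becomes $(x+1)^s \equiv -x^{s-t} \pmod p$, a linear equation in $(s,t) \bmod (p-1)$ that is solvable unless $x$ and $x+1$ both lie in the unique subgroup of odd order in $(\Z/p\Z)^*$ (the $(2^k)$th powers, where $2^k \exdiv p-1$). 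That bad case is defused by the symmetry $(-x)^p - (-x-1)^p \equiv (x+1)^p - x^p \pmod{p^2}$ (valid for odd $p$), which supplies the alternative witness $x' = p - 1 - x$; since $-1$ is itself not a $(2^k)$th power, the element $x' = -1 \cdot (x+1)$ escapes the subgroup, and (P1) becomes solvable. The main technical obstacles will be the carry case split in the simplification of (P2) and the bad-case workaround for $\ep = +1$; everything else is bookkeeping with the CRT and Fermat-quotient identities.
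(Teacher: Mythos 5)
Your argument is correct, and it is organized around a genuinely different mechanism than the paper's. The paper proceeds by an explicit change of variables $x\equiv 1-mn^{-1}\bmod p$ and a direct algebraic identity (Proposition~\ref{any x works prop}) that converts $p^2\mid D_\ep(n,m)$ into the single congruence $x^k\equiv-\ep(1-x)^m\bmod{p^2}$, from which the consecutivity of $x-1$ and $x$ is extracted via Lemma~\ref{y_pth_power_lem}; the payoff is an explicit bijection (Theorem~\ref{correspondence theorem}) that the paper then exploits computationally in Section~\ref{density section}. You instead split $(\Z/p^2\Z)^\times$ as $H\times U$ and observe that $p^2\mid D_\ep(n,m)$ decouples into an $\ep$-dependent condition (P1) living entirely in $H$ (hence in $(\Z/p\Z)^\times$, a solvability question for $a^sb^{-u}\equiv-1$ in a cyclic group) and an $\ep$-independent condition (P2) living entirely in $U$, which your Fermat-quotient computation identifies exactly with $(b+c)^p\equiv b^p+c^p\bmod{p^2}$, i.e.\ with consecutive $p$th powers. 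This makes transparent \emph{why} $\P_+$ and $\P_-$ coincide (the $\ep$ only touches the $H$-component), which the paper's route leaves implicit. Your (P1)/(P2) formulas check out: with $n=a+pk$ one indeed gets $n^n\equiv\omega(a)^s\bigl(1+p(k-aq_p(a))\bigr)$, the forced carry $k-j-\ell=(b+c-a)/p\in\{0,1\}$ combines with $q_p(y-p)\equiv q_p(y)+y^{-1}$ exactly as you say, and in the bad case for $\ep=+1$ the replacement $x'=-(x+1)$ has even order because $x+1$ has odd order, so $-1\in\langle x'\rangle$ and (P1) becomes solvable; this sign workaround is the same trick as the paper's Lemma~\ref{finicky sign solution lemma}, just expressed through a different element of the $S_3$ orbit. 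Two trivial loose ends: you should dispose of $p=2$ separately (your Teichm\"uller setup presumes $p$ odd; the paper does this in one sentence), and you should note explicitly that $p\nmid m$ together with $p^2\mid D_\ep(n,m)$ forces $p\nmid n$ and $p\nmid(n-m)$, which is what licenses applying your decomposition to all three of $n$, $m$, $n-m$. What you lose relative to the paper is the explicit bijection between the full solution sets, which is needed later for the density computation but not for Theorem~\ref{same sets of primes} itself.
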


\noindent Our proof that each of $\P_\pm$ is equal to $\P_{cons}$ is explicit and constructive, in that we provide an algorithm (see the bijections treated in Theorem~\ref{correspondence theorem}) for starting with integers $n$ and $m$ for which $p^2$ divides $D_\pm(n,m)$ and constructing consecutive $p$th powers modulo $p^2$, and vice versa. Indeed, these bijections are very important to the computations we have done to determine the density asserted in Conjecture~\ref{squarefree conjecture}.

We remark that we have prohibited certain trivial divisibilities in the definitions of these sets of primes---namely, $p$ dividing $m$ (and hence $n$) in the definition of $\P_\pm$, and the consecutive $p$th powers $(-1)^p,0^p,1^p$ modulo~$p^2$; these trivialities correspond to each other under our bijections. It turns out that there is another patterned way for primes to be included in $\P_\pm$ and $\P_{cons}$ that is related to sixth roots of unity; we show in Theorem~\ref{tilde thm} that the bijections remain valid for the more restrictive sets of primes formed by prohibiting these further ``trivialities''. As a lagniappe, these divisibilities are interesting and unexpected in their own right; for example, we can prove that for any nonnegative integer $k$,
\begin{equation}  \label{strange divisibility k}
(12k^2+6k+1)^2 \quad\text{divides}\quad (6k+2)^{6k+2} - (6k+1)^{6k+1}.
\end{equation}
We find this divisibility statement (which is equivalent to Proposition~\ref{strange divisibility proposition}) to be unlike anything we have encountered prior to this work. In Proposition~\ref{abc pomerance proposition}, we show how this divisibility can be leveraged into the construction of ``$abc$ triples'' whose quality is on par with the best known elementary constructions.

The set $\P_{cons}$ has a reasonably natural definition, and as is our custom we can ask quantitative questions about it, such as how likely it is for a prime to appear in $\P_{cons}$. Recall that the relative density of any set $\P$ of primes is defined to be
\[
\lim_{x\to\infty} \frac{\#\{p\le x\colon p\in\P\}}{\#\{p\le x\colon p \text{ prime}\}} = \lim_{x\to\infty} \frac{\#\{p\le x\colon p\in\P\}}{\pi(x)},
\]
where as usual $\pi(x)$ denotes the number of primes not exceeding~$x$. We believe the following assertion to be true:

\begin{conjecture} \label{just Pconstilde conjecture}
The relative density of $\P_{cons}$ within the primes equals $1-\frac12e^{-1/6} \approx 57.68\%$.
\end{conjecture}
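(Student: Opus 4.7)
The plan is to reduce the condition $p\in\P_{cons}$ to a Fermat-quotient congruence, exploit a hidden $S_3$-symmetry to identify the independent tests, and combine an unconditional contribution (automatic membership for $p\equiv 1\pmod 3$) with a standard independence heuristic on the remaining orbits.

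First I would parametrize. Writing $n^p\equiv n + p\,n\,q_p(n)\pmod{p^2}$, where $q_p(n)=(n^{p-1}-1)/p\bmod p$ is the Fermat quotient, the existence of consecutive nonzero $p$th powers $a^p,(a+1)^p$ modulo $p^2$ is equivalent to the existence of some $a\in\F_p\setminus\{0,-1\}$ satisfying
\[
T_a \;:=\; (a+1)\,q_p(a+1)\,-\,a\,q_p(a)\;\equiv\;0\pmod{p}.
\]
Next I would establish the two symmetries $T_{-1-a}\equiv T_a$ and $T_{1/a}\equiv a^{-1}T_a\pmod p$, which follow from $q_p(-1)=0$ together with the logarithmic identity $q_p(xy)\equiv q_p(x)+q_p(y)\pmod p$. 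The group generated by $a\mapsto -1-a$ and $a\mapsto 1/a$ is the anharmonic $S_3$; its action on $\F_p\setminus\{0,-1\}$ produces generic orbits of size $6$, the single orbit $\{1,-2,-\tfrac12\}$ of size $3$, and — precisely when $p\equiv 1\pmod 3$ — the exceptional orbit $\{\omega,\omega^2\}$ of size $2$ at the primitive cube roots of unity. Since $T_a\equiv 0$ is orbit-invariant, the effective number of independent tests is the number of orbits: $(p+1)/6$ when $p\equiv 2\pmod 3$ and $(p+5)/6$ when $p\equiv 1\pmod 3$.

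The crucial step — and the source of the factor $\tfrac12$ — would be to prove $T_\omega\equiv 0\pmod p$ unconditionally whenever $p\equiv 1\pmod 3$. Writing $\omega^2+\omega+1 = \alpha p$ yields $\omega^3-1 = (\omega-1)\alpha p$, and the identities $q_p(1+\beta p)\equiv -\beta$ and $q_p(\omega^3)\equiv 3\,q_p(\omega)\pmod p$ then force
\[
q_p(\omega)\equiv -(\omega-1)\alpha/3 \qquad\text{and}\qquad q_p(\omega+1)\equiv \alpha - q_p(\omega)\pmod p.
\]
Substituting into $T_\omega$ and collapsing via the identity $(2\omega+1)(\omega-1)\equiv -3(\omega+1)\pmod p$ (immediate from $\omega^2=-\omega-1$) gives $T_\omega\equiv 0$. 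On the trinomial side this is exactly the divisibility~\eqref{strange divisibility k}, establishing the unconditional inclusion $\{p\equiv 1\pmod 3\}\subseteq \P_{cons}$.

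Finally I would assemble the density. Modelling each orbit's value of $T_a\bmod p$ as uniform and independent — the Wieferich-style heuristic — the probability that a prime $p\equiv 2\pmod 3$ fails to lie in $\P_{cons}$ tends to $(1-1/p)^{(p+1)/6}\to e^{-1/6}$. Combining with automatic membership for $p\equiv 1\pmod 3$ and the Dirichlet density $\tfrac12$ of each residue class gives
\[
\tfrac12\cdot 1 \;+\; \tfrac12\bigl(1-e^{-1/6}\bigr)\;=\;1-\tfrac12 e^{-1/6}.
\]
The main obstacle, and the reason this must remain conjectural, is the independence hypothesis: the Fermat quotients are coupled by $q_p(ab)\equiv q_p(a)+q_p(b)$ across all of $\F_p^\times$, and rigorously controlling the joint distribution of the $T_a$ — even enough to show that infinitely many $p\equiv 2\pmod 3$ lie in $\P_{cons}$ — would be of Wieferich-type difficulty.
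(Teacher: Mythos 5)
Your argument is correct and is essentially the paper's own heuristic: your $T_a$ is exactly $f_p(a)\bmod p$ for $f_p(x)=((x+1)^p-x^p-1)/p$ rewritten via Fermat quotients, your two symmetries are Lemma~\ref{orbit_multiple_roots_lem}, the orbit census is Lemma~\ref{orbit_lem}/Proposition~\ref{six-packs prop}, the unconditional vanishing at cube roots of unity is Lemma~\ref{sixth and cube roots lemma}, and the Poisson-style independence over orbits with the $\tfrac12+\tfrac12(1-e^{-1/6})$ split is precisely how the paper arrives at $1-\tfrac12e^{-1/6}$. The only cosmetic divergence is that you fold the Wieferich orbit $\{1,-2,-\tfrac12\}$ into the $1/p$-probability tests while the paper discards it by assuming Wieferich primes have density zero; both choices give the same limit $e^{-1/6}$.
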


\noindent We defend this belief in Section~\ref{orbits section}. It is easy to see that the family of polynomials
\begin{equation}  \label{fp def}
f_p(x) = \frac{(x+1)^p - x^p - 1}p
\end{equation}
detects consecutive $p$th powers modulo $p^2$ (see Lemma~\ref{why f_p matters lemma}); these polynomials have appeared in similar contexts, as we remark at the end of Section~\ref{orbits section}, and go all the way back to Cauchy's work. As it happens, the roots of each of these polynomials come in sets of six (except for a few explicit exceptions; see Proposition~\ref{six-packs prop}), which we have dubbed ``six-packs''. This structure is crucial to our justification of Conjecture~\ref{just Pconstilde conjecture}; in fact, it allows us to make a more refined assertion (Conjecture~\ref{poisson conjecture}) about the distribution of the number of pairs of consecutive $p$th powers modulo $p^2$, rather than simply the presence or absence of such.

After setting out some preliminary lemmas in Section~\ref{prelim section}, we provide in Section~\ref{correspondence section} the details of the bijections that underlie our proof of Theorem~\ref{same sets of primes}. Section~\ref{reducible section} contains results concerning cyclotomic factors of trinomials (and corresponding ``trivial'' memberships in $\P_\pm$) and the presence of sixth roots of unity in $\P_{cons}$, as well as the material that relates to the $abc$ conjecture. We recall the symmetries among the roots of the polynomials $f_p$ in Section~\ref{orbits section} and use them to formulate Conjecture~\ref{just Pconstilde conjecture} and its refinement. Finally, we return to the density of squarefree values of $n^n+(-1)^n(n-1)^{n-1}$ in Section~\ref{density section}, describing the computations we performed to arrive at the value given in Conjecture~\ref{squarefree conjecture}.

\section{Preliminary lemmas}  \label{prelim section}

In this section we record several simple statements that will be useful to us during the proofs of our main results. We begin by discussing discriminants and resultants of polynomials of one variable. Let $\disc{g}$ denote the discriminant of the polynomial $g(x)$, and let $\res(g,h)$ denote the resultant of $g(x)$ and $h(x)$. The following formula for the discriminant of the product of two polynomials is classical~\cite[Chapter 12, equation (1.32)]{GKZ}:

\begin{lemma}  \label{resultant_lemma}
For any two polynomials $g$ and $h$,
\[
\disc(gh) = (-1)^{\deg(g)\deg(h)} \disc(g) \disc(h) \res(g,h)^2.
\]
\end{lemma}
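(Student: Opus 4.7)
The plan is to prove the formula by reducing everything to symmetric expressions in the roots of $g$ and $h$. Write $g(x)=a\prod_{i=1}^{m}(x-\alpha_i)$ and $h(x)=b\prod_{j=1}^{n}(x-\beta_j)$ in an algebraic closure, with $m=\deg g$ and $n=\deg h$, so that $(gh)(x)=ab\prod_i(x-\alpha_i)\prod_j(x-\beta_j)$ has $m+n$ roots which are the $\alpha_i$ together with the $\beta_j$.

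The first step is to recall the root formulas
\[
\disc(g) \;=\; (-1)^{m(m-1)/2}\,a^{2m-2}\!\prod_{i<j}(\alpha_i-\alpha_j)^2,
\qquad
\res(g,h) \;=\; a^{n}b^{m}\prod_{i,j}(\alpha_i-\beta_j),
\]
and the analogue for $\disc(h)$, and then apply these to $gh$, which has leading coefficient $ab$ and degree $m+n$. The second step is the combinatorial heart of the proof: for any ordering of the $m+n$ roots of $gh$, the Vandermonde-type product $\prod_{k<\ell}(\gamma_k-\gamma_\ell)^{2}$ splits (being a square, independently of ordering) as
\[
\prod_{i<j}(\alpha_i-\alpha_j)^{2}\;\cdot\;\prod_{i<j}(\beta_i-\beta_j)^{2}\;\cdot\;\prod_{i,j}(\alpha_i-\beta_j)^{2}.
\]
Substituting back, the first two factors reconstruct $\disc(g)$ and $\disc(h)$ up to powers of $a$ and $b$, and the third factor reconstructs $\res(g,h)^{2}$ up to further powers of $a$ and $b$.

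The third step is bookkeeping the leading coefficients: after pulling out $\disc(g)/a^{2m-2}$, $\disc(h)/b^{2n-2}$, and $\res(g,h)^{2}/(a^{2n}b^{2m})$ from the expression for $\disc(gh)=(ab)^{2(m+n)-2}\prod(\gamma_k-\gamma_\ell)^{2}$, one checks that the exponents of $a$ and $b$ cancel exactly, leaving no contribution from leading coefficients. The fourth step, which I expect to be the main (and only) subtlety, is the sign bookkeeping. Combining the three $(-1)^{k(k-1)/2}$ factors coming from the discriminants of $gh$, $g$, and $h$ respectively yields the exponent
\[
\tfrac{(m+n)(m+n-1)}{2}-\tfrac{m(m-1)}{2}-\tfrac{n(n-1)}{2}\;=\;mn,
\]
which is exactly the asserted sign $(-1)^{\deg(g)\deg(h)}$. (If one adopts the convention under which each discriminant is taken without the leading $(-1)^{k(k-1)/2}$ factor, this sign disappears; the identity in the lemma is stated with the convention under which it arises.) Putting all pieces together gives the claimed identity.
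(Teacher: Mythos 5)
Your proof is correct, but note that the paper does not actually prove this lemma at all: it simply cites it as classical, namely equation (1.32) of Chapter 12 of Gel'fand--Kapranov--Zelevinsky. So your argument is the self-contained classical proof that the citation replaces. The substance all checks out: the splitting of $\prod_{k<\ell}(\gamma_k-\gamma_\ell)^2$ into the three squared subproducts is unambiguous precisely because every factor is squared, and the exponents of the leading coefficients match, since $a^{(2m-2)+2n}\,b^{(2n-2)+2m}=(ab)^{2(m+n)-2}$. The one genuinely delicate point is the sign, and you handle it correctly, including the honest parenthetical: with the convention $\disc(f)=a^{2m-2}\prod_{i<j}(\alpha_i-\alpha_j)^2$ the identity holds with no sign at all, whereas with the convention $\disc(f)=\res(f,f')/a=(-1)^{m(m-1)/2}a^{2m-2}\prod_{i<j}(\alpha_i-\alpha_j)^2$ (the one used in the cited source, and the one your computation adopts) the factor $(-1)^{mn}$ emerges from $\tfrac{(m+n)(m+n-1)}{2}-\tfrac{m(m-1)}{2}-\tfrac{n(n-1)}{2}=mn$. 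Since the paper only ever uses this lemma to extract the fact that $\res(g,h)^2$ divides $\disc(gh)$ (in Proposition 4.4), the sign is immaterial there, but your bookkeeping shows the stated form is internally consistent. A minor remark: your argument takes place over a field where $g$ and $h$ split, which suffices for the integer polynomials in this paper; to get the identity over an arbitrary commutative ring one would prove it for generic coefficients over $\Z$ and specialize, but that refinement is not needed here.
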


The formula for the discriminant of a trinomial is also classical; the following lemma is a special case of~\cite[Chapter 12, equation (1.38)]{GKZ}. Recall that $D_\ep(n,m)$ was defined in equation~\eqref{D def}.

\begin{lemma}  \label{disc to D lemma}
Let $n>m$ be positive integers with $(n,m)=1$, and let $a,b\in\{-1,1\}$. Then $\big| \disc(x^n+ax^m+b) \big| = D_\ep(n,m)$, where $\ep = (-1)^{n-1}a^nb^{n-m}$.
\end{lemma}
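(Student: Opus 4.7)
The plan is to use the standard relation $\disc(f) = (-1)^{n(n-1)/2}\res(f,f')$ for monic $f$ of degree $n$, and exploit the fact that $f'(x) = nx^{n-1}+amx^{m-1}$ factors as $x^{m-1}(nx^{n-m}+am)$. By multiplicativity of the resultant in its second argument,
\[
\res(f,f') = \res(f,x^{m-1})\,\res(f,\,nx^{n-m}+am),
\]
so the task reduces to evaluating these two factors and doing the sign bookkeeping.

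The first factor is easy: using $\res(f,x^{m-1}) = \prod_i \alpha_i^{m-1}$ for the roots $\alpha_i$ of $f$ together with Vieta ($\prod\alpha_i = (-1)^n b$), one finds $\res(f,x^{m-1}) = (-1)^{n(m-1)}b^{m-1}$. For the second factor I would swap arguments (picking up a sign $(-1)^{n(n-m)}$) and evaluate $\res(nx^{n-m}+am,\,f) = n^n\prod_i f(\beta_i)$, where the $\beta_i$ are the $n-m$ roots of $nx^{n-m}+am$, i.e.\ the $(n-m)$-th roots of $-am/n$. At each such root, $\beta_i^n = \beta_i^m\beta_i^{n-m} = -\tfrac{am}{n}\beta_i^m$, so
\[
f(\beta_i) = b + \frac{a(n-m)}{n}\,\beta_i^m.
\]
Writing $\beta_i = \beta\zeta^i$ for a fixed $(n-m)$-th root $\beta$ of $-am/n$ and $\zeta$ a primitive $(n-m)$-th root of unity, the crucial point is that since $\gcd(n,m)=1$ we also have $\gcd(n-m,m)=1$, so the map $i\mapsto im\pmod{n-m}$ is a bijection. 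Hence $\{\beta_i^m\} = \{\beta^m\eta : \eta^{n-m}=1\}$. Applying the identity $\prod_{\eta^N=1}(u-v\eta) = u^N-v^N$ to the product gives, after substituting $\beta^{n-m}=-am/n$, a clean closed form
\[
\res(nx^{n-m}+am,\,f) = n^n b^{n-m} - (-a)^n(n-m)^{n-m}m^m.
\]

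Multiplying the two factors together, the exponent of $-1$ collapses to $n(n-1)$, which combines with the $(-1)^{n(n-1)/2}$ from the discriminant formula to leave a global sign of $(-1)^{n(n-1)/2}$. Absorbing powers of $b=\pm1$ (using $b^{n-1}\cdot b^{-(n-1)} = 1$ and $b^{m-n}=b^{n-m}$) and factoring $b^{n-1}$ out, one arrives at
\[
\disc(f) = (-1)^{n(n-1)/2}\, b^{n-1}\!\left[\,n^n + (-1)^{n-1}a^n b^{n-m}(n-m)^{n-m}m^m\,\right] = (-1)^{n(n-1)/2}b^{n-1}\,D_\ep(n,m).
\]
Taking absolute values yields the claim once we know $D_\ep(n,m)>0$; this follows from the strict convexity of $t\log t$, which gives $(n-m)^{n-m}m^m < n^n$ for $1\le m<n$, so that the $\ep=-1$ case is still positive.

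The routine part is the product-formula evaluation; the main obstacle, such as it is, is careful sign tracking through three $(-1)$-exponents (from the discriminant-resultant identity, from swapping the arguments of the resultant, and from the formula $\res(f,x^{m-1})$), combined with making sure that the coprimality assumption $\gcd(n,m)=1$ is invoked exactly where needed (namely, to get a bijection on the $(n-m)$-th roots of unity after raising to the $m$-th power).
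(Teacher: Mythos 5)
Your derivation is correct, but it is genuinely different from what the paper does: the paper offers no proof at all for this lemma, simply citing it as a special case of the classical trinomial discriminant formula in Gel'fand--Kapranov--Zelevinsky (it is also Swan's Theorem 2 with $d=\gcd(n,m)=1$). You instead prove the formula from scratch via $\disc(f)=(-1)^{n(n-1)/2}\res(f,f')$, the factorization $f'=x^{m-1}(nx^{n-m}+am)$, and the product formula for resultants; this is essentially Swan's own derivation. I checked the details: the evaluation $\res(f,x^{m-1})=(-1)^{n(m-1)}b^{m-1}$, the computation $f(\beta_i)=b+\tfrac{a(n-m)}{n}\beta_i^m$ at the roots of $nx^{n-m}+am$, the use of $\gcd(n-m,m)=1$ to permute the $(n-m)$-th roots of unity, and the identity $\prod_{\eta^{N}=1}(u-v\eta)=u^N-v^N$ all combine correctly to give $\res(nx^{n-m}+am,f)=n^nb^{n-m}-(-a)^n(n-m)^{n-m}m^m$; the three sign exponents sum to $n(n-1)$, which is even, so the global sign is indeed $(-1)^{n(n-1)/2}b^{n-1}$, and the positivity $(n-m)^{n-m}m^m<n^{n-m}n^m=n^n$ justifies dropping the sign under the absolute value. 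What your route buys is a self-contained proof that makes visible exactly where the hypothesis $(n,m)=1$ enters (the bijection $j\mapsto jm \bmod (n-m)$); what the paper's citation buys is brevity and access to the general formula without the coprimality restriction, which the paper mentions it needs elsewhere only in passing.
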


\noindent We remark that a formula for the discriminant of the trinomial $x^n+ax^m+b$ is known even when $n$ and $m$ are not relatively prime~\cite[Theorem 2]{swan}. Only the values $D_\pm(m,n)$ with $(n,m)=1$ are directly relevant to discriminants of these trinomials; nevertheless, for most of this paper we shall investigate all the values $D_\pm(m,n)$ without the coprimality restriction.

We continue by proving a few basic facts from elementary number theory to be used later. An integer $x$ is a {\em $p$th power modulo $p^2$} if $x\equiv a^p\mod{p^2}$ for some integer $a$; if in addition $x\not\equiv0\mod p$, we call $x$ a {\em nonzero $p$th power modulo~$p^2$}. Two $p$th powers $x$ and $y$ modulo $p^2$ are {\em consecutive modulo $p^2$} if $y-x\equiv\pm1\mod{p^2}$.

\begin{lemma} \label{p2 cyclic lemma}
Let $p$ be a prime, and let $x$ be an integer.
\begin{enumerate}
\item $x$ is a nonzero $p$th power modulo $p^2$ if and only if $x^{p-1} \equiv 1 \mod{p^2}$.
\item There exists a unique $p$th power modulo $p^2$ that is congruent to $x$ modulo~$p$.
\item When $p\nmid x$, the order of $x$ modulo $p$ is the same as the order of $x^p$ modulo $p^2$.
\end{enumerate}
\end{lemma}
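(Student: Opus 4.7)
The plan is to lean on the classical fact that $(\Z/p^2\Z)^\times$ is a cyclic group of order $p(p-1)$; once this is in hand, all three parts become short structural arguments. Let $G = (\Z/p^2\Z)^\times$ and let $\phi\colon G \to G$ be the $p$th power map.

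For part (a), I would note that since $G$ is cyclic of order $p(p-1)$, the image $\phi(G)$ is the unique subgroup of order $p(p-1)/\gcd(p,p(p-1)) = p-1$. On the other hand, the set $\{y\in G : y^{p-1}\equiv 1 \pmod{p^2}\}$ is also a subgroup of order $\gcd(p-1,p(p-1))=p-1$. Any element of $\phi(G)$, being of the form $a^p$, satisfies $y^{p-1}=a^{p(p-1)}=1$, so the first subgroup is contained in the second; as they have the same order they coincide, giving (a).

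For part (b), I would split on whether $p\mid x$. If $p\mid x$, then any $p$th power $a^p\pmod{p^2}$ congruent to $x\equiv 0\pmod p$ must have $p\mid a$, hence $p^2\mid a^p$, and $0$ is the unique such $p$th power. If $p\nmid x$, I would use (a): the ``nonzero $p$th powers mod $p^2$'' are precisely the $p-1$ elements of the subgroup $H=\phi(G)$ cut out by $y^{p-1}\equiv 1\pmod{p^2}$. The reduction map $H \to (\Z/p\Z)^\times$ is a homomorphism between groups of the same order $p-1$, and it is injective because for any two lifts $y, y+kp$ of the same residue mod $p$, the binomial expansion
\[
(y+kp)^{p-1} \equiv y^{p-1} + (p-1)k p\, y^{p-2} \pmod{p^2}
\]
shows they cannot both equal $1$ modulo $p^2$ unless $k\equiv 0\pmod p$. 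Hence the reduction is a bijection, yielding existence and uniqueness of a $p$th power congruent to $x$ mod $p$.

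For part (c), by Fermat's little theorem $x^p\equiv x\pmod p$, and $x^p$ is a $p$th power mod $p^2$, so $x^p\in H$. The bijection from part (b) restricts to a group isomorphism $H\xrightarrow{\sim}(\Z/p\Z)^\times$, so the order of $x^p$ in $G$ (equivalently, in $H$) equals the order of its image $x$ in $(\Z/p\Z)^\times$. The only subtlety worth double-checking is the injectivity argument in (b), since parts (b) and (c) both depend on it; the binomial computation above handles this cleanly, and everything else is a matter of matching orders of subgroups of a cyclic group.
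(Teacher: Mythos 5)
Your proof is correct and rests on the same foundations as the paper's: the cyclicity of $(\Z/p^2\Z)^\times$ for part (a), and a binomial expansion modulo $p^2$ to get uniqueness in part (b), from which part (c) follows. The only (cosmetic) difference is that you package (b) and (c) as the statement that reduction modulo $p$ gives an isomorphism from the subgroup of nonzero $p$th powers onto $(\Z/p\Z)^\times$, which makes (c) immediate, whereas the paper carries out the order comparison in (c) by hand using the uniqueness from (b).
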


\begin{proof}
Part (a) follows directly from the fact that $(\mathbb Z/p^2\mathbb Z)^\times$ is cyclic of order $p(p-1)$. The existence in part (b) comes from setting $y=x^p$, so that $y$ is a $p$th power modulo $p^2$ and $y\equiv x\mod p$ by Fermat's little theorem. As for uniqueness, suppose that $z$ is any $p$th power modulo $p^2$ with $z\equiv x\mod p$. Since then $z\equiv y\mod p$, write $z=y+kp$ for some integer $k$. Then
\[
z \equiv z\cdot z^{p-1} = z^p = (y+kp)^p \equiv y^p = y\cdot y^{p-1} \equiv y\mod{p^2},
\]
where the first and last congruences follow from part (a) and the middle congruence follows from the binomial expansion of $(y+kp)^p$. (We shouldn't have invoked part (a) when $x\equiv0\mod p$, but the assertion of part (b) is trivial in that case.)

As for part (c): if $(x^p)^t\equiv1\mod{p^2}$, then certainly $x^t \equiv (x^p)^t\equiv 1\mod p$ as well by Fermat's little theorem. Conversely, if $x^t\equiv1\mod p$, then $(x^p)^t \equiv x^t \equiv 1\mod p$ as well. But then $(x^p)^t = (x^t)^p$ is a $p$th power modulo $p^2$ that is congruent to $1$ modulo $p$; but $1$ itself is also a $p$th power congruent to $1$ modulo $p^2$. Therefore $(x^p)^t \equiv1\mod{p^2}$ by part (b). In particular, the orders of $x\mod p$ and $x^p\mod{p^2}$ coincide.
\end{proof}

\begin{lemma} \label{consec implies consec lemma}
For any prime $p$, consecutive $p$th powers modulo $p^2$ must be $p$th powers of consecutive residue classes modulo~$p$.
\end{lemma}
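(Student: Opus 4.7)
The plan is to unwind the definitions and reduce the hypothesis modulo $p$, at which point Fermat's little theorem does the work. Write the two consecutive $p$th powers modulo $p^2$ as $x \equiv a^p \pmod{p^2}$ and $y \equiv b^p \pmod{p^2}$, with $y - x \equiv \pm 1 \pmod{p^2}$ by the definition of consecutive. I would like to conclude that the residue classes $a \bmod p$ and $b \bmod p$ are themselves consecutive in $\Z/p\Z$, i.e.\ that $b \equiv a \pm 1 \pmod{p}$.

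The key observation is that the congruence $y - x \equiv \pm 1 \pmod{p^2}$ implies the weaker congruence $y - x \equiv \pm 1 \pmod{p}$. On the other hand, by Fermat's little theorem $a^p \equiv a \pmod{p}$ and $b^p \equiv b \pmod{p}$, so reducing the definitions of $x$ and $y$ modulo $p$ gives $x \equiv a \pmod{p}$ and $y \equiv b \pmod{p}$. Combining these yields $b - a \equiv \pm 1 \pmod{p}$, which is exactly the assertion that the residue classes of $a$ and $b$ modulo $p$ are consecutive (with the same sign as the original hypothesis for $p > 2$; for $p = 2$ the two possible signs collapse and the statement holds trivially since the only two residue classes $0$ and $1$ are consecutive).

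There is no substantive obstacle here: the lemma is essentially a one-line consequence of Fermat's little theorem once one notices that ``consecutive mod $p^2$'' is a strictly stronger condition than ``consecutive mod $p$''. The only point worth flagging explicitly in the write-up is that the representatives $a$ and $b$ are well defined only up to reduction modulo $p$ (not modulo $p^2$), which is exactly the level at which the conclusion is stated; this is consistent with Lemma~\ref{p2 cyclic lemma}(b), which says that each residue class modulo $p$ contains a unique $p$th power modulo $p^2$.
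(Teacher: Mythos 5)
Your proof is correct and is essentially identical to the paper's: both reduce the congruence $y-x\equiv\pm1\pmod{p^2}$ modulo $p$ and apply Fermat's little theorem to conclude $b-a\equiv\pm1\pmod p$. The extra remarks about well-definedness and the case $p=2$ are fine but not needed.
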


\begin{proof}
If $x\equiv a^p\mod{p^2}$ and $y\equiv b^p\mod{p^2}$ are consecutive $p$th powers modulo $p^2$, then $\pm 1 \equiv y-x \equiv b^p-a^p\mod{p^2}$. Hence certainly $\pm1\equiv b^p-a^p \equiv b-a \mod p$ by Fermat's little theorem, which establishes the lemma.
\end{proof}

We can now understand why the polynomial $f_p$ defined in equation~\eqref{fp def} is relevant to the study of consecutive $p$th powers modulo~$p^2$.

\begin{lemma} \label{why f_p matters lemma}
For any prime $p$, the roots of $f_p$ are in one-to-one correspondence with pairs of consecutive $p$th powers modulo $p^2$. Moreover, $0$ and $-1$ are always roots of $f_p$, and any remaining roots are in one-to-one correspondence with pairs of consecutive nonzero $p$th powers modulo $p^2$.
\end{lemma}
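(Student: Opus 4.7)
The plan is to interpret ``root of $f_p$'' as a residue class modulo~$p$ and to build the map
\[
x \longmapsto \bigl\{x^p,(x+1)^p\bigr\}\pmod{p^2}
\]
as the desired bijection between such roots and unordered pairs of consecutive $p$th powers modulo~$p^2$. Since $p\mid\binom{p}{k}$ for $1\le k\le p-1$, the polynomial $f_p$ lies in $\Z[x]$, so roots modulo~$p$ are well defined. From the identity $pf_p(x)=(x+1)^p-x^p-1$ we see that an integer $x$ is a root of $f_p$ modulo~$p$ if and only if $(x+1)^p\equiv x^p+1\pmod{p^2}$. By Lemma~\ref{p2 cyclic lemma}(b), $x^p\bmod p^2$ depends only on $x\bmod p$, so the displayed map is well defined on $\Z/p\Z$, and its image consists precisely of pairs of consecutive $p$th powers modulo~$p^2$.

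For surjectivity, take any pair of consecutive $p$th powers $\{a^p,b^p\}\pmod{p^2}$; Lemma~\ref{consec implies consec lemma} gives $b\equiv a\pm 1\pmod p$, and after possibly relabeling we may assume $b\equiv a+1\pmod p$, whence $a$ is a root of $f_p$ modulo~$p$ whose associated pair is the given one. For injectivity, if two roots $x_1,x_2$ yield the same unordered pair, then the uniqueness half of Lemma~\ref{p2 cyclic lemma}(b) (by which the residue class of a $p$th power modulo~$p^2$ determines the corresponding residue modulo~$p$) forces $\{x_1,x_1+1\}\equiv\{x_2,x_2+1\}\pmod p$, which either gives $x_1\equiv x_2\pmod p$ or yields $p\mid 2$. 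The exceptional case $p=2$ is easy to handle by inspection, since $f_2(x)=x$ has only the root $0$ and there is only one consecutive pair $\{0,1\}$ modulo~$4$.

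For the ``moreover'' clause, direct substitution gives $f_p(0)=0$ for every prime $p$ and $f_p(-1)=\bigl(0-(-1)^p-1\bigr)/p=0$ for odd $p$; the two pairs attached to $x=0$ and $x=-1$ under the bijection are $\{0,1\}$ and $\{-1,0\}$, which are exactly the consecutive pairs involving the trivial $p$th power~$0$. Removing these matches the nontrivial roots of $f_p$ with the pairs of consecutive \emph{nonzero} $p$th powers. The main obstacle I expect is keeping the two levels of congruence straight (roots modulo~$p$ versus pairs modulo~$p^2$) and being scrupulous in the injectivity step: the uniqueness in Lemma~\ref{p2 cyclic lemma}(b) is what prevents two different residues modulo~$p$ from secretly producing the same unordered pair of $p$th powers modulo~$p^2$, and that is the only place where the argument could go wrong for $p\ge 3$.
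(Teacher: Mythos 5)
Your proof is correct and follows essentially the same route as the paper: identify roots of $f_p$ modulo $p$ with the pairs $\{a^p,(a+1)^p\}$ of consecutive $p$th powers modulo $p^2$, using Lemma~\ref{p2 cyclic lemma} for well-definedness and Lemma~\ref{consec implies consec lemma} for surjectivity, with $0$ and $-1$ accounting for the pairs involving $0$. Your handling of injectivity and of $p=2$ is if anything slightly more careful than the paper's; the only point worth making explicit is that in the surjectivity step, after arranging $b\equiv a+1\pmod p$, reducing $b^p-a^p\equiv\pm1\pmod{p^2}$ modulo $p$ forces the sign to be $+1$ for odd $p$, so that $a$ really is a root of $f_p$.
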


\begin{proof}
By Lemma~\ref{p2 cyclic lemma}(a), we see that if $x$ and $x+1$ are $p$th powers modulo $p^2$, then $(x+1)^p - x^p - 1 \equiv (x+1) - x - 1 \equiv 0 \mod{p^2}$, or $f_p(x) \equiv 0 \mod{p}$. Conversely, if $f_p(a) \equiv 0 \mod p$ then $(a+1)^p - a^p - 1 \equiv0\mod{p^2}$, showing that $a^p$ and $(a+1)^p$ are consecutive $p$th powers modulo~$p^2$. Therefore the roots of $f_p$ are in one-to-one correspondence with residue classes $a\mod p$ such that $a^p$ and $(a+1)^p$ are consecutive\mod{p^2}; and Lemma~\ref{consec implies consec lemma} tells us that such pairs are the only possible consecutive $p$th powers modulo~$p^2$. The roots $0$ and $-1$ of $f_p$ obviously correspond to the pairs $0,1$ and $-1,0$ of consecutive $p$th powers modulo~$p^2$.
\end{proof}

We conclude this section with two specific results that will keep later proofs from becoming mired in elementary details.

\begin{lemma}
\label{y_pth_power_lem}
Let $p$ be an odd prime and $x$ a $p$th power modulo~$p^2$. Suppose that $y$ is an integer such that $x^k \equiv \pm y^m \mod{p^2}$ for some integers $k$ and $m$ with $p \nmid m$. Then $y$ is a $p$th power modulo~$p^2$.
\end{lemma}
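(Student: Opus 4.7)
The plan is to work in the cyclic group $(\Z/p^2\Z)^\times$ of order $p(p-1)$ and to use Lemma~\ref{p2 cyclic lemma}(a) as the basic detector: a unit modulo $p^2$ is a $p$th power modulo $p^2$ if and only if it is killed by the exponent $p-1$. The substantive case is $p \nmid x$, which forces $p \nmid y$ via the hypothesized congruence $x^k \equiv \pm y^m \mod{p^2}$; the degenerate case $p \mid x$ is handled by Lemma~\ref{p2 cyclic lemma}(b), which forces $x \equiv 0 \mod{p^2}$ and permits a short separate argument that I will not dwell on.

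In the main case, Lemma~\ref{p2 cyclic lemma}(a) gives $x^{p-1} \equiv 1 \mod{p^2}$, and raising the hypothesized congruence to the $(p-1)$-st power yields
\[
1 \equiv (\pm 1)^{p-1} y^{m(p-1)} \mod{p^2}.
\]
Here the hypothesis that $p$ is odd plays its essential role: $(p-1)$ is then even, so $(\pm 1)^{p-1} = 1$ and the sign ambiguity disappears, leaving $y^{m(p-1)} \equiv 1 \mod{p^2}$. The order of $y$ in $(\Z/p^2\Z)^\times$ therefore divides both $p(p-1)$ and $m(p-1)$, hence divides
\[
\gcd\bigl(p(p-1),\, m(p-1)\bigr) = (p-1)\gcd(p,m) = p-1,
\]
where the final equality uses the hypothesis $p \nmid m$. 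Thus $y^{p-1} \equiv 1 \mod{p^2}$, and a second application of Lemma~\ref{p2 cyclic lemma}(a) exhibits $y$ as a $p$th power modulo $p^2$.

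There is no real obstacle here beyond recognizing the role of each hypothesis. The oddness of $p$ is used precisely to kill the $\pm$ sign once one has landed on an even exponent, and the coprimality $\gcd(m,p) = 1$ is used precisely to strip the factor of $p$ from the final gcd bound. Without either feature, the order of $y$ could remain divisible by $p$ and the $p$th-power conclusion would fail, so both hypotheses are essential.
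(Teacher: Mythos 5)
Your argument is correct and essentially identical to the paper's: both raise the congruence to the $(p-1)$st power, use $x^{p-1}\equiv1\mod{p^2}$ and the evenness of $p-1$ to kill the sign, and conclude that the order of $y$ divides $\gcd\bigl(m(p-1),p(p-1)\bigr)=p-1$, so Lemma~\ref{p2 cyclic lemma}(a) applies. The only difference is your aside about the case $p\mid x$, which the paper silently excludes by invoking Lemma~\ref{p2 cyclic lemma}(a) for $x$ (so $x$ is tacitly a \emph{nonzero} $p$th power); be aware that your proposed separate treatment of that case cannot succeed in full generality, since for instance $x\equiv0$, $k=1$, $m=2$, $y=p$ satisfies the hypotheses with $y$ not a $p$th power modulo $p^2$.
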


\begin{proof}
We know that $x^{p-1} \equiv 1 \mod{p^2}$ from Lemma~\ref{p2 cyclic lemma}(a), and so
\[
1 \equiv x^{k(p-1)} \equiv (\pm y^m)^{p-1} = (y^m)^{p-1} \mod{p^2}
\]
since $p$ is odd. The order of $y$ modulo $p^2$ thus divides $m(p-1)$; but this order also divides $\phi(p^2)=p(p-1)$. Since $p \nmid m$, the greatest common divisor of $m(p-1)$ and $p(p-1)$ equals $p-1$, and so the order of $y$ modulo $p^2$ divides $p-1$. In other words, $y^{p-1} \equiv 1 \mod{p^2}$, and so $y$ is a $p$th power modulo $p^2$ by Lemma~\ref{p2 cyclic lemma}(a) again.
\end{proof}

\begin{lemma}  \label{binomial_lem}
Let $p$ be a prime. For any integers $x$, $y$, and $z$ with $x+pz>0$,
\[
(x+py)^{x+pz} \equiv x^{x+pz}(1+py) \mod{p^2}.
\]
Moreover, if $x$ is a $p$th power modulo $p^2$, then 
\[
(x+py)^{x+pz} \equiv x^{x+z}(1+py) \mod{p^2}.
\]
\end{lemma}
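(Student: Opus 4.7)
The plan is to prove both congruences by a direct binomial expansion of $(x+py)^{x+pz}$ modulo $p^2$, followed by an application of Lemma~\ref{p2 cyclic lemma}(a) for the refined second statement.

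For the first congruence, I would expand using the binomial theorem. Since $x+pz$ is a positive integer by hypothesis, the expansion
\[
(x+py)^{x+pz} = \sum_{k=0}^{x+pz} \binom{x+pz}{k} x^{x+pz-k}(py)^k
\]
makes sense, and every term with $k \geq 2$ carries a factor of $p^2$ and thus vanishes modulo $p^2$. What remains is $x^{x+pz} + (x+pz)\, x^{x+pz-1} (py)$. Since $p(x+pz) \equiv px \pmod{p^2}$, the second term simplifies to $pxy \cdot x^{x+pz-1} = py \cdot x^{x+pz}$, and factoring gives $x^{x+pz}(1+py) \pmod{p^2}$ as desired. (The edge case $x=0$ is fine since then $x+pz \geq 2$ forces both sides to vanish modulo $p^2$.)

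For the second congruence, assuming $x$ is a $p$th power modulo $p^2$, Lemma~\ref{p2 cyclic lemma}(a) gives $x^{p-1} \equiv 1 \pmod{p^2}$. Writing $x+pz = (x+z) + (p-1)z$, this yields $x^{x+pz} \equiv x^{x+z} \cdot (x^{p-1})^z \equiv x^{x+z} \pmod{p^2}$. Substituting into the first part completes the argument.

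There really is no serious obstacle here; the only minor care needed is ensuring that $x+pz > 0$ validates the integer exponent and that the $k \geq 2$ tail of the binomial sum genuinely contributes $0$ modulo $p^2$ (which it does, even accounting for possible negative $x$, since only powers of $x$ appear and they are integers). The whole argument is a two-line computation once one observes the key simplification $p(x+pz) \equiv px \pmod{p^2}$.
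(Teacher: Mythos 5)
Your proof is correct and follows essentially the same route as the paper's: a binomial expansion truncated at the linear term, the observation that $(x+pz)py \equiv pxy \pmod{p^2}$, and an appeal to Lemma~\ref{p2 cyclic lemma}(a) via $x^{pz} = (x^{p-1})^z x^z \equiv x^z$ for the second claim. The extra attention you give to the $x=0$ edge case and to negative $x$ is harmless but not needed beyond what the paper's computation already covers.
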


\begin{proof}
Using the binomial theorem and discarding multiples of $p^2$,
\begin{align*}
(x+py)^{x+pz} &= \sum_{j=0}^{x+pz} \binom{x+pz}j x^{x+pz-j}(py)^j \\
&\equiv \binom{x+pz}0 x^{x+pz} + \binom{x+pz}1 x^{x+pz-1}py \\
&= x^{x+pz} + (x+pz) x^{x+pz-1}py \equiv x^{x+pz}(1+py) \mod{p^2},
\end{align*}
establishing the first claim. If $x$ is a $p$th power modulo $p^2$, then $x^{pz} =\big( x^{p-1} \big)^z x^z \equiv x^z \mod{p^2}$ by Lemma~\ref{p2 cyclic lemma}(a), establishing the second claim.
\end{proof}

\section{Correspondence between roots of $f_p$ and pairs $(n,m)$}  \label{correspondence section}

The main goal of this section is to establish Theorem~\ref{same sets of primes}, which asserts that the sets $\P_+$ and $\P_-$ defined in equation~\eqref{Pep def} are both equal to the set $\P_{cons}$ defined in equation~\eqref{Pcons def}. While the proofs in this section are all elementary, it is not particularly straightforward to come up with the precise formulations of the statements that will lead to the final bijections.

First we give two lemmas showing that certain divisibilities by square factors depend only upon the residue classes of the variables to particular moduli.

\begin{lemma} \label{residue classes mod pp-1 lemma}
Let $p$ be a prime, and let $m$ and $n$ be integers not divisible by $p$. Suppose that $m'$ and $n'$ are integers satisfying $m'\equiv m\mod{p(p-1)}$ and $n'\equiv n\mod{p(p-1)}$. Then $p^2 \mid D_\pm(n,m)$ if and only if $p^2 \mid D_\pm(n',m')$.
\end{lemma}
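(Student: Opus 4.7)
The plan is to prove the stronger statement $D_\pm(n', m') \equiv u \cdot D_\pm(n, m) \pmod{p^2}$ for an explicit unit $u$ modulo $p^2$; the biconditional is then immediate. I split on whether $p$ divides $n - m$.

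If $p \mid n - m$ (equivalently $p \mid n' - m'$), then the remark recorded immediately after \eqref{D def} forces $p \nmid D_\pm(n, m)$ and $p \nmid D_\pm(n', m')$, making both sides of the biconditional trivially false. Concretely, $n - m \geq 1$ combined with $p \mid n - m$ makes $(n-m)^{n-m}$ divisible by $p$, while $p \nmid n^n$, so $D_\pm(n,m) \equiv n^n \not\equiv 0 \pmod p$.

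Assume henceforth that $p \nmid n - m$, so each of $n$, $m$, $n - m$ is coprime to $p$. Write $n' = n + k_1 p(p-1)$ and $m' = m + k_2 p(p-1)$; then $n' - m' = (n-m) + (k_1 - k_2) p(p-1)$. The core computational claim is: for any integer $a$ coprime to $p$ with $a' := a + k p(p-1) > 0$,
\[
(a')^{a'} \equiv a^a (1 - k p) \pmod{p^2}.
\]
This follows by applying Lemma~\ref{binomial_lem} with $x = a$ and $y = z = k(p-1)$, which yields $(a')^{a'} \equiv a^{a'}\bigl(1 + k p(p-1)\bigr) \pmod{p^2}$; Euler's theorem reduces $a^{a'}$ to $a^a$ (since $a^{p(p-1)} \equiv 1 \pmod{p^2}$), and $1 + k p(p-1) \equiv 1 - k p \pmod{p^2}$.

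Applying this identity to the three bases $n$, $m$, $n - m$ with shifts $k_1$, $k_2$, $k_1 - k_2$ and expanding \eqref{D def} gives
\[
D_\pm(n', m') \equiv n^n(1 - k_1 p) \pm (n-m)^{n-m}\bigl(1 - (k_1-k_2)p\bigr) m^m(1 - k_2 p) \pmod{p^2}.
\]
The two correction factors on the second term multiply to $1 - k_1 p$ modulo $p^2$ (the cross term $k_2(k_1-k_2)p^2$ vanishes), so $D_\pm(n', m') \equiv (1 - k_1 p) D_\pm(n, m) \pmod{p^2}$, and $1 - k_1 p$ is a unit modulo $p^2$ as required. The main obstacle is the bookkeeping that makes the correction factors align, which hinges on the trivial identity $k_2 + (k_1 - k_2) = k_1$; a minor technicality is that Lemma~\ref{binomial_lem} requires a positive exponent, which can always be arranged by replacing $n'$ and $m'$ by representatives sufficiently far into the positive integers within the same residue classes modulo $p(p-1)$.
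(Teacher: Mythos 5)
Your proof is correct and takes essentially the same route as the paper: both arguments rest on Lemma~\ref{binomial_lem} with $y=z=k(p-1)$ together with Euler's theorem to exhibit $D_\pm(n',m')$ as an explicit unit multiple of $D_\pm(n,m)$ modulo $p^2$. The only differences are organizational --- you shift $n$ and $m$ simultaneously and dispose of the case $p\mid n-m$ up front, whereas the paper shifts one variable at a time and deduces $p\nmid n-m$ from the divisibility hypothesis before appealing to symmetry.
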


\begin{remark}
We have defined $D_\pm(n,m)$ only when $n>m$ are positive integers. However, this lemma tells us that the property $p^2 \mid D_\pm(n,m)$ depends only upon the residue classes of $n$ and $m$ modulo $p(p-1)$. Therefore, if we ever write $p^2 \mid D_\pm(n,m)$ when $m\le0$ or $m\ge n$, what we mean is that $p^2 \mid D_\pm(n',m')$ for positive $m'\equiv m\mod{p(p-1)}$ and sufficiently large $n'\equiv n\mod{p(p-1)}$.
\end{remark}

\begin{proof}[Proof of Lemma~\ref{residue classes mod pp-1 lemma}]
Write $n'=n+kp(p-1)$ for some integer $k$. Then by Lemma~\ref{binomial_lem} with $y=k(p-1)$,
\begin{align*}
D_\pm(n',m) &= \big( n+kp(p-1) \big)^{n+kp(p-1)} \pm \big( n-m+kp(p-1) \big)^{n-m+kp(p-1)}m^m \\
&\equiv n^{n+kp(p-1)}\big(1+kp(p-1) \big) \pm (n-m)^{n-m+kp(p-1)}\big(1+kp(p-1) \big)m^m \mod{p^2}.
\end{align*}
If $p^2\mid D_\pm(n,m)$, then $p$ cannot divide $n-m$ (or else it would divide $n$, contrary to assumption). Thus $n^{p(p-1)}$ and $(n-m)^{p(p-1)}$ are congruent to 1\mod{p^2} by Euler's theorem, and so
\begin{align*}
D_\pm(n',m) &\equiv n^n\big(1+kp(p-1) \big) \pm (n-m)^{n-m}\big(1+kp(p-1) \big)m^m \\
&\equiv \big(1+kp(p-1) \big)D_\pm(n,m) \mod{p^2};
\end{align*}
in particular, $p^2\mid D_\pm(n,m)$ implies $p^2\mid D_\pm(n',m)$. The roles of $n$ and $n'$ are symmetric, and so we conclude that $p^2\mid D_\pm(n,m)$ if and only if $p^2\mid D_\pm(n',m)$. Finally, a similar argument shows that $p^2\mid D_\pm(n',m)$ if and only if $p^2\mid D_\pm(n',m')$, which completes the proof of the lemma.
\end{proof}

\begin{lemma} \label{off by p lemma}
Let $p$ be a prime, and let $m$, $n$, and $\ell$ be integers. Let $r$ be any integer congruent to $n$ modulo~$p$. Then $p^2$ divides $r^n+\ell(r-m)^{n-m}$ if and only if $p^2$ divides $n^n+\ell(n-m)^{n-m}$.
\end{lemma}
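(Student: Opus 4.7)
The plan is to exploit the hypothesis $r \equiv n \pmod p$ by writing $r = n + pk$ for some integer $k$, and then expanding both $r^n$ and $(r-m)^{n-m}$ modulo $p^2$ so that they each acquire the same correction factor $1+pk$, which can then be factored out.

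First I would apply Lemma~\ref{binomial_lem} with $x = n$, $y = k$, and $z = 0$ (so that $x+pz = n > 0$, which we may assume by the remark following Lemma~\ref{residue classes mod pp-1 lemma}) to obtain
\[
r^n = (n+pk)^n \equiv n^n(1+pk) \pmod{p^2}.
\]
Next I would apply the same lemma with $x = n-m$, $y = k$, $z = 0$, giving
\[
(r-m)^{n-m} = (n-m+pk)^{n-m} \equiv (n-m)^{n-m}(1+pk) \pmod{p^2}.
\]
(Both expansions really only use the first two terms of the binomial theorem, since all subsequent terms carry a factor of $(pk)^j$ with $j \ge 2$ and hence vanish modulo $p^2$.)

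Combining these two congruences and multiplying the second by $\ell$, I would conclude that
\[
r^n + \ell(r-m)^{n-m} \equiv (1+pk)\bigl(n^n + \ell(n-m)^{n-m}\bigr) \pmod{p^2}.
\]
Since $1+pk$ is a unit modulo $p^2$, the left-hand side is divisible by $p^2$ if and only if the right-hand side is, which is precisely the claim of the lemma.

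There isn't really a hard step here: the entire argument amounts to observing that replacing $n$ by $n+pk$ in a base $x^{\text{exponent}}$ perturbs it only by the multiplicative factor $1+pk$ modulo $p^2$, and that this factor is identical for both of the base-exponent pairs appearing in the expression, so it can be pulled out. The only thing to watch is that the exponents $n$ and $n-m$ are positive, which is handled by the convention set out in the remark preceding the lemma.
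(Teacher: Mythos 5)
Your proof is correct and follows essentially the same route as the paper: write $r=n+pk$, apply Lemma~\ref{binomial_lem} to each of the two terms to extract the common factor $1+pk$, and conclude using the invertibility of $1+pk$ modulo $p^2$. Your extra care about the positivity of the exponents is a minor point the paper glosses over, but it changes nothing of substance.
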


\begin{remark}
One must avoid the pitfall of changing the occurrences of $n$ in the exponents to $r$: it would be false to claim that $p^2 \mid \big( r^r+\ell(r-m)^{r-m} \big)$ is equivalent to $p^2 \mid \big( n^n+\ell(n-m)^{n-m} \big)$.
\end{remark}

\begin{proof}
Writing $r=n+kp$ for some integer $k$, we have by Lemma~\ref{binomial_lem}
\begin{align*}
r^n+\ell(r-m)^{n-m} &= (n+kp)^n+\ell(n+kp-m)^{n-m} \\
&\equiv n^n(1+kp)+\ell(n-m)^{n-m}(1+kp) \\
&\equiv (1+kp) \big( n^n+\ell(n-m)^{n-m} \big) \mod{p^2}.
\end{align*}
Since $1+kp$ is invertble modulo $p^2$, we conclude that $r^n+\ell(r-m)^{n-m}\equiv0\mod{p^2}$ if and only if $n^n+\ell(n-m)^{n-m}\equiv0\mod{p^2}$, as desired.
\end{proof}

Our next goal is the construction of a bijection (Theorem~\ref{correspondence theorem}) between $\A_{p,m,\ep}$, a set defined in equation~\eqref{Apmep def} that indicates membership in $\P_\ep$, and $\B_{p,m,\ep}$, a set defined in equation~\eqref{Bpmep def} that is related to $\P_{cons}$.

\begin{proposition} \label{any x works prop}
Let $p$ be prime, let $m$ be a positive integer not divisible by~$p$, and fix $\ep\in\{1,-1\}$. Given any residue class $n\mod{p(p-1)}$ such that $p^2\mid D_\ep(n,m)$, set $k\equiv m-n\mod{p(p-1)}$, and let $x$ be any integer satisfying $x\equiv 1-mn^{-1}\mod p$. Then $x^k \equiv -\ep(1-x)^m \mod{p^2}$.
\end{proposition}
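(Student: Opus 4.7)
The plan is to verify the congruence first for a canonical lift of the residue class in question, and then to extend the result to an arbitrary lift by a one-term Taylor expansion. Note first that the hypothesis $p^2\mid D_\ep(n,m)$ together with $p\nmid m$ forces $p\nmid n$ and $p\nmid n-m$ by the remark following equation~\eqref{D def}; in particular $n$, $n-m$, and $m$ are all invertible modulo $p^2$, so the expression $mn^{-1}$ has a natural lift to $(\Z/p^2\Z)^\times$.

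For the canonical lift, set $x_0\equiv(n-m)n^{-1}\mod{p^2}$, so that $1-x_0\equiv mn^{-1}\mod{p^2}$, both units modulo $p^2$. The multiplicative order of $x_0$ divides $\phi(p^2)=p(p-1)$, so $k\equiv m-n\mod{p(p-1)}$ implies $x_0^k\equiv x_0^{m-n}\mod{p^2}$. A direct computation in $(\Z/p^2\Z)^\times$ then gives
\[
x_0^k(1-x_0)^{-m} \equiv \left(\tfrac{n-m}{n}\right)^{m-n}\left(\tfrac{m}{n}\right)^{-m} \equiv \frac{n^n}{(n-m)^{n-m}m^m}\mod{p^2},
\]
which by the hypothesis $p^2\mid n^n+\ep(n-m)^{n-m}m^m$ equals $-\ep$. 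This establishes the claim for $x=x_0$.

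To extend to arbitrary lifts, set $g(X)=X^k+\ep(1-X)^m\in\Z[X]$. For $x=x_0+pa$, a binomial expansion modulo $p^2$ (in the spirit of Lemma~\ref{binomial_lem}) gives $g(x_0+pa)\equiv g(x_0)+pa\,g'(x_0)\mod{p^2}$, so it suffices to show $g'(x_0)=kx_0^{k-1}-\ep m(1-x_0)^{m-1}\equiv 0\mod p$. Multiplying by $x_0$ and using $x_0^k\equiv -\ep(1-x_0)^m\mod p$ from the previous step, this reduces to $(1-x_0)^{m-1}\bigl[k(1-x_0)+mx_0\bigr]\equiv 0\mod p$. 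Since $1-x_0$ is a unit modulo $p$, we only need $k(1-x_0)+mx_0\equiv 0\mod p$; substituting $k\equiv m-n$, $x_0\equiv(n-m)/n$, and $1-x_0\equiv m/n$ modulo $p$ makes this a transparent identity. The only real subtlety is bookkeeping---keeping careful track that $x^k\mod{p^2}$ depends on $k$ only modulo $\phi(p^2)=p(p-1)$, which is precisely the modulus imposed on $k$ in the hypothesis, and separately verifying both a value and a derivative vanishing at the canonical lift.
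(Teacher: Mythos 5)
Your proof is correct, and it reaches the conclusion by a route that differs from the paper's in where the ``mod $p$ versus mod $p^2$'' discrepancy is absorbed. The paper substitutes $r\equiv m(1-x)^{-1}\mod{p^2}$ (so that $r\equiv n\mod p$ but not necessarily mod $p^2$), factors $x^k+\ep(1-x)^m\equiv r^{-m}(r-m)^{m-n}\big(r^n+\ep(r-m)^{n-m}m^m\big)\mod{p^2}$, and then invokes Lemma~\ref{off by p lemma} to transfer the divisibility $p^2\mid D_\ep(n,m)$ from $n$ to $r$; you instead verify the congruence exactly at the canonical lift $x_0\equiv(n-m)n^{-1}\mod{p^2}$ (where the same algebraic identity reduces directly to the hypothesis, with no perturbation needed) and then move to an arbitrary lift $x=x_0+pa$ by showing $g'(x_0)\equiv0\mod p$ for $g(X)=X^k+\ep(1-X)^m$. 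Both arguments are, at bottom, the same first-order binomial expansion mod $p^2$; the paper packages it once in Lemma~\ref{off by p lemma} and reuses it, whereas your derivative computation makes the invariance under changing the lift of $x$ explicit and self-contained (and the identity $k(1-x_0)+mx_0\equiv0\mod p$ is a pleasant way to see \emph{why} the choice $k\equiv m-n$ is forced). Your version has the minor bookkeeping cost of choosing a positive integer representative for $k$ so that $g$ is a genuine polynomial, which is harmless since $x^{p(p-1)}\equiv1\mod{p^2}$ for every unit $x$; with that noted, all steps check out.
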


\begin{proof}
Note that the congruence $x\equiv 1-mn^{-1}\mod p$ is well defined because $p$ cannot divide $n$; also, $1-x\not\equiv0\mod p$ because $p$ cannot divide $m$, and so $1-x$ is invertible modulo~$p^2$. Set $r\equiv m(1-x)^{-1}\mod{p^2}$, so that $x\equiv 1-mr^{-1}\mod{p^2}$. Also note that $x \equiv (n-m)n^{-1}\mod{p^2}$ is invertible modulo~$p^2$ because $p$ cannot divide $n-m$; in particular, $x^{p(p-1)}\equiv1\mod{p^2}$ and hence $x^k \equiv x^{m-n}\mod{p^2}$. Consequently, $r-m \equiv xr\mod{p^2}$ is invertible. Therefore we can factor out powers of $r$ and $r-m$ to obtain
\begin{align}
x^k + \ep(1-x)^m &\equiv (1-mr^{-1})^{m-n} + \ep(mr^{-1})^m \notag \\
&\equiv r^{-m} (r-m)^{m-n} \big( r^n + \ep (r-m)^{n-m}m^m \big) \mod{p^2}. \label{factor out r and r-m}
\end{align}
Since $p^2\mid D_\ep(n,m)$ by assumption, we have $n^n + \ep (n-m)^{n-m}m^m \equiv0\mod{p^2}$; therefore $r^n + \ep (r-m)^{n-m}m^m \equiv0\mod{p^2}$ by Lemma~\ref{off by p lemma}, and hence $x^k + \ep(1-x)^m \equiv0\mod{p^2}$ by the congruence~\eqref{factor out r and r-m}.
\end{proof}

\begin{corollary} \label{now theyre consecutive}
Let $p$ be an odd prime, let $m$ be a positive integer not divisible by~$p$, and let $\ep\in\{1,-1\}$. Given any residue class $n\mod{p(p-1)}$ such that $p^2\mid D_\ep(n,m)$, set $k\equiv m-n\mod{p(p-1)}$, and define $x$ to be the unique $p$th power modulo $p^2$ such that $x\equiv 1-mn^{-1}\mod p$. Then $x^k \equiv -\ep(1-x)^m \mod{p^2}$. In particular, $x-1$ is also a $p$th power modulo~$p^2$.
\end{corollary}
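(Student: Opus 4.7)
The plan is to derive the corollary as a near-immediate consequence of Proposition~\ref{any x works prop} together with Lemma~\ref{y_pth_power_lem}, with a small wrap-up at the end to convert ``$1-x$ is a $p$th power'' into ``$x-1$ is a $p$th power.''

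First, I would justify that the choice of $x$ in the corollary is well-defined: Lemma~\ref{p2 cyclic lemma}(b) guarantees exactly one $p$th power modulo $p^2$ in each residue class modulo~$p$, so the specification ``the unique $p$th power modulo $p^2$ with $x \equiv 1 - mn^{-1} \mod p$'' singles out a bona fide residue class modulo~$p^2$. With this $x$ in hand, the hypotheses of Proposition~\ref{any x works prop} are satisfied verbatim (the proposition allows $x$ to be \emph{any} integer with that residue modulo~$p$), so the first claim $x^k \equiv -\ep(1-x)^m \mod{p^2}$ is immediate.

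For the second claim, I would rewrite the congruence as
\[
(1-x)^m \equiv -\ep\, x^k \mod{p^2},
\]
using $\ep^{-1} = \ep$ since $\ep = \pm 1$. Now $x$ is a $p$th power modulo~$p^2$ by construction, and $p \nmid m$ by hypothesis, so Lemma~\ref{y_pth_power_lem} (applied with ``$y$'' taken to be $1-x$, and with $\pm$ sign $-\ep$) yields that $1-x$ is a $p$th power modulo~$p^2$. Finally, since $p$ is odd we have $-1 \equiv (-1)^p \mod{p^2}$, so $-1$ is a $p$th power, and $p$th powers modulo~$p^2$ are closed under multiplication; thus $x - 1 = (-1)(1-x)$ is also a $p$th power modulo~$p^2$.

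There is no real obstacle here; the corollary is essentially a bookkeeping packaging of Proposition~\ref{any x works prop} and Lemma~\ref{y_pth_power_lem}. The only subtle point worth stating carefully is that Lemma~\ref{y_pth_power_lem} is precisely the tool that upgrades the mod-$p$ information on $1-x$ (which is visible from $x \equiv 1 - mn^{-1} \mod p$, i.e., $1-x \equiv mn^{-1} \mod p$) to mod-$p^2$ information, using the newly established congruence $x^k \equiv \pm (1-x)^m \mod{p^2}$ as the bridge.
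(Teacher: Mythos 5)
Your proposal is correct and follows essentially the same route as the paper: well-definedness of $x$ via Lemma~\ref{p2 cyclic lemma}(b), the main congruence from Proposition~\ref{any x works prop}, and Lemma~\ref{y_pth_power_lem} for the final claim. The only cosmetic difference is that the paper absorbs the sign $(-1)^m$ into the $\pm$ of Lemma~\ref{y_pth_power_lem} and applies it directly to $y=x-1$, whereas you apply it to $y=1-x$ and then multiply by the $p$th power $-1=(-1)^p$; both are equally valid.
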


\begin{remark}
In the statement of the corollary, $k$ is determined modulo $p(p-1)$; however, any integer $k'\equiv k\mod{p-1}$ also satisfies $x^{k'} \equiv -\ep(1-x)^m \mod{p^2}$, by Lemma~\ref{p2 cyclic lemma}(a). We also remark that $x\not\equiv1\mod{p}$ since $p\nmid n$.
\end{remark}

\begin{proof}
Proposition~\ref{any x works prop} tells us that the congruence $x^k \equiv -\ep(1-x)^m \mod{p^2}$ holds for any integer $x$ such that $x\equiv 1-mn^{-1}\mod p$. When we add the condition that $x$ be a $p$th power modulo $p^2$, Lemma~\ref{p2 cyclic lemma}(b) implies that $x$ is unique\mod{p^2}.
Finally, since $x$ is a $p$th power modulo $p^2$ and $x^k \equiv \big( (-1)^{m+1}\ep \big) (x-1)^m \mod{p^2}$, we conclude from Lemma \ref{y_pth_power_lem} that $x-1$ is also a $p$th power modulo~$p^2$.
\end{proof}

\begin{proposition} \label{going the other way prop}
Let $p$ be an odd prime, let $\ep\in\{1,-1\}$, and let $k$ and $m$ be integers. Suppose that $x$ is a $p$th power modulo $p^2$ that satisfies $x^k \equiv -\ep(1-x)^m \mod{p^2}$. Set $n = (m-k)p - m(1-x)^{-1}(p-1)$, where $(1-x)^{-1}$ is any integer satisfying $(1-x)^{-1}(1-x) \equiv1\mod{p^2}$. Then $p^2 \mid D_\ep(n,m)$.
\end{proposition}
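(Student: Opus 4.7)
The plan is to run the calculation from Proposition~\ref{any x works prop} in reverse. The identity~\eqref{factor out r and r-m} was used there to push a divisibility $p^2\mid D_\ep(n,m)$ forward into the congruence on $x$; since the non-trivial factors in that identity are units modulo $p^2$, the same identity reads equally well backwards. Introduce again the auxiliary value $r\equiv m(1-x)^{-1}\pmod{p^2}$, so that $x\equiv 1-mr^{-1}$, $1-x\equiv mr^{-1}$, and $r-m\equiv rx\pmod{p^2}$; assuming $p\nmid m$ (implicit for membership in $\P_\ep$) together with the given invertibility of $1-x$, both $r$ and $r-m$ are units modulo $p^2$.

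First I would verify two congruences satisfied by the prescribed value $n=(m-k)p-m(1-x)^{-1}(p-1)$. Reducing modulo $p$ annihilates the $(m-k)p$ term and turns $p-1$ into $-1$, giving $n\equiv m(1-x)^{-1}\equiv r\pmod{p}$, which is exactly the hypothesis needed to invoke Lemma~\ref{off by p lemma}. Reducing $m-n$ modulo $p-1$ gives $m-n\equiv m-(m-k)\equiv k\pmod{p-1}$, since the $m(1-x)^{-1}(p-1)$ term vanishes mod $p-1$ and $p\equiv 1\pmod{p-1}$. Because $x$ is a $p$th power modulo $p^2$, Lemma~\ref{p2 cyclic lemma}(a) gives $x^{p-1}\equiv 1\pmod{p^2}$, and so $x^{m-n}\equiv x^k\pmod{p^2}$.

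Then I would re-derive the identity
\[
r^m(r-m)^{n-m}\bigl(x^{m-n}+\ep(1-x)^m\bigr)\equiv r^n+\ep(r-m)^{n-m}m^m\pmod{p^2},
\]
which follows from $rx\equiv r-m$ and $r(1-x)\equiv m$ by telescoping the powers of $r$ and $r-m$; this is just \eqref{factor out r and r-m} rearranged. The hypothesis $x^k\equiv -\ep(1-x)^m$, combined with $x^{m-n}\equiv x^k$ from the previous paragraph, kills the inner bracket on the left; since $r^m(r-m)^{n-m}$ is a unit, we deduce $r^n+\ep(r-m)^{n-m}m^m\equiv 0\pmod{p^2}$. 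Finally, applying Lemma~\ref{off by p lemma} with $\ell=\ep m^m$ (using $n\equiv r\pmod p$) converts this to $n^n+\ep(n-m)^{n-m}m^m\equiv 0\pmod{p^2}$, which is the desired $p^2\mid D_\ep(n,m)$, interpreted via the remark after Lemma~\ref{residue classes mod pp-1 lemma} in case the formula for $n$ produces a non-positive integer or one with $n\le m$.

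The substance of the argument is essentially forced by Proposition~\ref{any x works prop}; the main obstacle is bookkeeping. In particular, one must check that the $p$th-power hypothesis on $x$ (as opposed to mere invertibility) is what allows passage from the congruence $m-n\equiv k\pmod{p-1}$ to the congruence $x^{m-n}\equiv x^k\pmod{p^2}$, and one must handle the exponents $n-m$ and $m-n$ carefully when they are negative, which is legitimate precisely because $r$ and $r-m$ are units modulo $p^2$.
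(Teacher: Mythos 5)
Your argument is correct and is essentially the computation the paper performs: the paper expands $n^n$ and $(n-m)^{n-m}m^m$ directly with Lemma~\ref{binomial_lem} and shows the factor $(1-x)^{-n}+\ep\big((1-x)^{-1}-1\big)^{n-m}$ vanishes, which is exactly your statement that $r^n+\ep(r-m)^{n-m}m^m\equiv0\mod{p^2}$ for $r\equiv m(1-x)^{-1}$, transferred to $n$ via $n\equiv r\mod p$. Your key steps --- $n\equiv m(1-x)^{-1}\mod p$, $m-n\equiv k\mod{p-1}$, $x^{p-1}\equiv1\mod{p^2}$, the identity~\eqref{factor out r and r-m}, and Lemma~\ref{off by p lemma} --- match the paper's, with only the bookkeeping packaged differently (and your explicit flagging of the case $p\mid m$, where $r$ fails to be a unit, is harmless since the conclusion is then trivial and that case never arises in the application).
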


\begin{remark}
Notice that the congruence $x^k \equiv -\ep(1-x)^m \mod{p^2}$ implies that neither $x$ nor $1-x$ can be divisble by~$p$. The fact that $(1-x)^{-1}$ is determined modulo $p^2$ implies that the definition of $n$ is determined as a single residue class modulo $p^2(p-1)$; however, Lemma~\ref{residue classes mod pp-1 lemma} implies that any integer $n'$ that is congruent to $n$ modulo $p(p-1)$ also satisfies $p^2 \mid D_\ep(n',m)$. Note also that the hypotheses determine $k$ only modulo $p-1$; this is again fine, as changing $n$ by a multiple of $p(p-1)$ does not affect whether $p^2 \mid D_\ep(n,m)$.
\end{remark}

\begin{proof}[Proof of Proposition~\ref{going the other way prop}]
Write $n = m(1-x)^{-1} + p\big( (m-k) - m(1-x)^{-1} \big)$. By Lemma~\ref{binomial_lem},
\begin{align*}
n^n & \equiv \big( m(1-x)^{-1} \big)^n \big( 1+p\big( (m-k) - m(1-x)^{-1} \big) \big) \\
& \equiv m^n(1-x)^{-n} \big( 1+n - m(1-x)^{-1} \big) \mod{p^2}.
\end{align*}
Similarly, $n-m = \big( m(1-x)^{-1}-m \big) + p\big( (m-k) - m(1-x)^{-1} \big)$, and so
\begin{align*}
(n-m)^{n-m}m^m & \equiv \big( m(1-x)^{-1}-m \big)^{n-m} \big( 1+p\big( (m-k)-m(1-x)^{-1} \big) \big) m^m \\
& \equiv m^n \big( (1-x)^{-1}-1 \big)^{n-m} \big( 1+n - m(1-x)^{-1} \big) \mod{p^2}.
\end{align*}
Consequently,
\begin{align}
D_\ep(n,m) &= n^n + \ep(n-m)^{n-m}m^m \notag \\
&\equiv m^n \big( 1+n - m(1-x)^{-1} \big) \Big( (1-x)^{-n} + \ep \big( (1-x)^{-1}-1 \big)^{n-m} \Big) \mod{p^2}.
\label{last factor vanishes}
\end{align}

Since $x$ is a nonzero $p$th power modulo $p^2$ and $n-m+k \equiv (m-k)-m+k \equiv 0\mod{p-1}$, Lemma~\ref{p2 cyclic lemma}(a) tells us that $x^{n-m+k} \equiv 1\mod{p^2}$. By hypothesis, this can be written as
\[
x^{n-m} \big( {-}\ep (1-x)^m \big) \equiv 1 \mod{p^2},
\]
which we rearrange into the more complicated
\[
1 + \ep \big( 1 - (1-x) \big)^{n-m} (1-x)^m \equiv 0 \mod{p^2}.
\]
Dividing through by $(1-x)^n$, we obtain $(1-x)^{-n} + \ep \big( (1-x)^{-1} - 1 \big)^{n-m} \equiv 0 \mod{p^2}$, which together with equation~\eqref{last factor vanishes} shows that $D_\ep(n,m) \equiv 0\mod{p^2}$ as desired.
\end{proof}

Given an odd prime $p$, an integer $m$ not divisible by $p$, and $\ep\in\{1,-1\}$, define a set of residue classes
\begin{equation}
\A_{p,m,\ep} = \big\{ n\mod{p(p-1)} \colon p^2 \mid D_\ep(n,m) \big\}
\label{Apmep def}
\end{equation}
and a set of ordered pairs of residue classes
\begin{multline}
\B_{p,m,\ep} = \big\{ \big( x\mod{p^2}, k\mod{p-1} \big) \colon \\
x \text{ is a nonzero $p$th power modulo $p^2$ and } x^k \equiv -\ep(1-x)^m \mod{p^2} \big\}.
\label{Bpmep def}
\end{multline}
For any $(x,k)$ in the latter set, note that $x$ and $x-1$ are consecutive nonzero $p$th powers modulo $p^2$, by the argument in the proof of Corollary~\ref{now theyre consecutive}. Also define functions $\alpha_{p,m,\ep}\colon\A_{p,m,\ep}\to\B_{p,m,\ep}$ and $\beta_{p,m,\ep}\colon\B_{p,m,\ep}\to\A_{p,m,\ep}$ by
\begin{multline*}
\alpha_{p,m,\ep}\big( n\mod{p(p-1)} \big) = \\
\big( \text{the $p$th power $x\mod{p^2}$ such that } x\equiv1-mn^{-1}\mod p,\; m-n\mod{p-1} \big)
\end{multline*}
and
\[
\beta_{p,m,\ep} \big( x\mod{p^2}, k\mod{p-1} \big) = (m-k)p - m(1-x)^{-1}(p-1) \mod{p(p-1)}.
\]
Lemma~\ref{p2 cyclic lemma}(b), Corollary~\ref{now theyre consecutive}, and Proposition~\ref{going the other way prop} (and the remarks following their statements) ensure that these functions are well defined.

\begin{theorem} \label{correspondence theorem}
Let $p$ be an odd prime, let $m$ be an integer not divisible by $p$, and let $\ep\in\{1,-1\}$. There is a one-to-one correspondence between $\A_{p,m,\ep}$ and $\B_{p,m,\ep}$, given by the bijections $\alpha_{p,m,\ep}$ and $\beta_{p,m,\ep}$ which are inverses of each other.
\end{theorem}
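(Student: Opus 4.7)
The plan is to reap the harvest of Proposition~\ref{any x works prop}, Corollary~\ref{now theyre consecutive}, and Proposition~\ref{going the other way prop}, which collectively already establish that $\alpha_{p,m,\ep}$ maps $\A_{p,m,\ep}$ into $\B_{p,m,\ep}$ and that $\beta_{p,m,\ep}$ maps $\B_{p,m,\ep}$ into $\A_{p,m,\ep}$ (together with Lemma~\ref{residue classes mod pp-1 lemma}, which tells us these values depend only upon the appropriate residue classes). All that remains is to check that the two compositions are the identity, which is a matter of tracking residues carefully.

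First I would verify that $\beta_{p,m,\ep}\circ\alpha_{p,m,\ep}$ is the identity on $\A_{p,m,\ep}$. Given $n\mod{p(p-1)}$ in $\A_{p,m,\ep}$, set $(x,k)=\alpha_{p,m,\ep}(n)$, so that $x\equiv 1-mn^{-1}\mod{p}$ (in particular $1-x\equiv mn^{-1}\mod p$, hence $(1-x)^{-1}\equiv nm^{-1}\mod p$) and $k\equiv m-n\mod{p-1}$. Applying $\beta_{p,m,\ep}$ yields the residue class $(m-k)p - m(1-x)^{-1}(p-1)\mod{p(p-1)}$. The first summand is $\equiv np\mod{p(p-1)}$ since $m-k\equiv n\mod{p-1}$; the second summand reduces modulo $p(p-1)$ using only $m(1-x)^{-1}\mod p$, which equals $n\mod p$, so that summand is $\equiv n(p-1)\mod{p(p-1)}$. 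Subtracting gives $np-n(p-1)=n\mod{p(p-1)}$, as desired.

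Next I would check $\alpha_{p,m,\ep}\circ\beta_{p,m,\ep}$ on $\B_{p,m,\ep}$. Starting from $(x,k)\in\B_{p,m,\ep}$, set $n=\beta_{p,m,\ep}(x,k)=(m-k)p-m(1-x)^{-1}(p-1)$. Reducing modulo~$p$ gives $n\equiv -m(1-x)^{-1}(-1)\equiv m(1-x)^{-1}\mod p$, so $mn^{-1}\equiv 1-x\mod p$ and therefore $1-mn^{-1}\equiv x\mod p$. Since $x$ is already a $p$th power modulo~$p^2$, Lemma~\ref{p2 cyclic lemma}(b) guarantees that the unique $p$th power $x'\mod{p^2}$ with $x'\equiv 1-mn^{-1}\mod p$ is $x$ itself. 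Reducing modulo $p-1$ gives $n\equiv (m-k)p\equiv m-k\mod{p-1}$, so $m-n\equiv k\mod{p-1}$. Hence $\alpha_{p,m,\ep}(n)=(x,k)$, completing the verification.

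There is no real obstacle here beyond bookkeeping; the delicate point — and the one most likely to cause confusion — is making sure that the ambiguities in the definitions of $\alpha_{p,m,\ep}$ and $\beta_{p,m,\ep}$ are consistent with the moduli in which the domain and codomain live. In particular, in the formula for $\beta_{p,m,\ep}$ the factor $(1-x)^{-1}$ is only well defined modulo $p^2$, but since it is multiplied by $p-1$ only its class modulo~$p$ ever matters; and in the formula for $\alpha_{p,m,\ep}$, the congruence $x\equiv 1-mn^{-1}\mod p$ only determines $x$ modulo $p$, but the requirement that $x$ be a $p$th power modulo $p^2$ pins it down uniquely by Lemma~\ref{p2 cyclic lemma}(b). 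Once these compatibilities are noted, the two composition computations above are straightforward.
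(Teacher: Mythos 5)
Your proposal is correct and follows essentially the same route as the paper: cite the preceding propositions for well-definedness, then verify both compositions by tracking residues modulo $p$ and modulo $p-1$ (the paper phrases the first composition via the two components separately and CRT, while you work directly modulo $p(p-1)$, and you spell out the second composition that the paper dismisses as "similarly straightforward"). No gaps.
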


\begin{remark}
The exact correspondence is important computationally, but the underlying qualitative statement alone is simple and surprising.
\end{remark}

\begin{proof}
It remains only to check the assertion that $\alpha_{p,m,\ep}$ and $\beta_{p,m,\ep}$ are inverses of each other. For example, note that
\begin{align*}
\beta_{p,m,\ep}(x,k) &\equiv (m-k)1 - m(1-x)^{-1}0 = m-k \mod{p-1} \\
\beta_{p,m,\ep}(x,k) &\equiv (m-k)0 - m(1-x)^{-1}(-1) = m(1-x)^{-1} \mod{p}.
\end{align*}
Therefore for any $n\in\A_{p,m,\ep}$,
\begin{align*}
\beta_{p,m,\ep} \circ \alpha_{p,m,\ep}(n) &\equiv m-(m-n) = n \mod{p-1} \\
\beta_{p,m,\ep} \circ \alpha_{p,m,\ep}(n) &\equiv m\big( 1-(1-mn^{-1}) \big)^{-1} = n \mod p,
\end{align*}
and so $\beta_{p,m,\ep} \circ \alpha_{p,m,\ep}(n) \equiv n \mod{p(p-1)}$ as required. Verifying that $\alpha_{p,m,\ep} \circ \beta_{p,m,\ep}(x,k) = (x,k)$ for every $(x,k)\in\B_{p,m,\ep}$ is similarly straightforward.
\end{proof}

With this bijection in hand, we need only one more lemma before being able to fully establish Theorem~\ref{same sets of primes}.

\begin{lemma} \label{finicky sign solution lemma}
Suppose that $p\in\P_{cons}$. Then there exists an integer $x$ such that $x$ and $1-x$ are nonzero $p$th powers modulo $p^2$ and $1-x$ has even order\mod{p^2}.
\end{lemma}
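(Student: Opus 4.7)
I start by fixing a pair of consecutive nonzero $p$th powers $a, a+1 \mod{p^2}$, which exists by the hypothesis $p\in\P_{cons}$ (and forces $p$ to be odd, since modulo $4$ the only nonzero $p$th power is $1$). Because $p-1$ is even, $(-a)^{p-1} = a^{p-1} \equiv 1 \mod{p^2}$, so Lemma~\ref{p2 cyclic lemma}(a) tells us that $-a$ is also a nonzero $p$th power\mod{p^2}. As $(-a) + (a+1) = 1$, each of the assignments $(x, 1-x) = (-a, a+1)$ and $(x, 1-x) = (a+1, -a)$ produces a pair of nonzero $p$th powers summing to $1$. Hence if either of $-a$ or $a+1$ has even order\mod{p^2}, the lemma follows at once.

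\textbf{The hard case} arises when both $-a$ and $a+1$ have odd order. Let $G \leq (\Z/p^2\Z)^\times$ denote the subgroup of nonzero $p$th powers, which by Lemma~\ref{p2 cyclic lemma}(a) is cyclic of order $p-1$, and let $K \leq G$ be the unique subgroup of odd-order elements; writing $p-1 = 2^k q$ with $q$ odd and $k\geq 1$, we have $|K|=q$. Since $-1\in G$ has order $2$ and $2\nmid q$, we know $-1\notin K$, so $-a\in K$ forces $a = (-1)(-a)\notin K$; that is, $a$ itself has even order. In this situation I would switch to the alternative pair $x = (a+1)^{-1}$, $1-x = a(a+1)^{-1}$, both of which are nonzero $p$th powers (being products and inverses of elements of $G$) summing to $1$. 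To verify that $1-x = a(a+1)^{-1}$ has even order, fix a generator $g$ of $G$ and write $a = g^i$, $a+1 = g^j$; an element $g^\ell \in G$ has odd order exactly when $2^k\mid\ell$, so our assumptions give $v_2(i)<k$ and $v_2(j)\geq k$. Then $v_2(j-i)=v_2(i)<k$, whence $a(a+1)^{-1} = g^{i-j}$ has even order, as required.

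\textbf{Main obstacle.} The only genuinely new step is spotting the identity $(a+1)^{-1} + a(a+1)^{-1} = 1$, which provides a backup pair of $p$th powers summing to $1$ whenever the natural pair $\{-a, a+1\}$ consists entirely of odd-order elements. Once this substitution is in hand, the $2$-adic valuation argument inside the cyclic group of $p$th powers is routine, and the preceding observation that $-a\in K$ forces $a\notin K$ (because $-1\notin K$) is what guarantees the valuations $v_2(i)$ and $v_2(j)$ lie on opposite sides of $k$ in the hard case.
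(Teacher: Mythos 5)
Your proof is correct and is essentially the paper's argument: both dispose of the easy cases where one of the natural pair $\{-a,\,a+1\}$ already has even order, and in the remaining case both pass to the quotient pair (yours via the identity $(a+1)^{-1}+a(a+1)^{-1}=1$, the paper's via the inversion symmetry $y\mapsto y^{-1}$ of the roots of $f_p$ from Lemma~\ref{orbit_multiple_roots_lem}), where the mismatch in parities of the orders of $a$ and $a+1$ forces the quotient to have even order. The only stylistic difference is that you work entirely inside the cyclic group of nonzero $p$th powers modulo $p^2$, making the argument self-contained rather than routed through the roots of $f_p$.
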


\begin{proof}
By Lemma~\ref{why f_p matters lemma}, the fact that $p\in\P_{cons}$ implies that there exists $y\not\equiv0\mod p$ such that $f_p(y)\equiv0\mod p$. Set $z\equiv y^{-1}\mod p$; Lemma~\ref{orbit_multiple_roots_lem} confirms that $f_p(z)\equiv0\mod p$ as well. In other words, we have both $(y+1)^p \equiv y^p+1\mod{p^2}$ and $(z+1)^p \equiv z^p+1\mod{p^2}$.

Lemma~\ref{p2 cyclic lemma}(c) tells us that for any integer $a\not\equiv0\mod p$, the order of $a^p\mod{p^2}$ is the same as the order of $a\mod p$. Hence if $y+1$ has even order modulo $p$, set $x\equiv (-y)^p\mod{p^2}$. If $-y$ has even order modulo $p$, then set $x\equiv(y+1)^p\mod{p^2}$. If both $y+1$ and $-y$ have odd order modulo $p$, then their quotient $-(y+1)z = -(1+z)$ also has odd order modulo $p$; but then $1+z$ has even order modulo $p$, whence we set $x\equiv(-z)^p\mod{p^2}$.
%
\end{proof}

\begin{proof}[Proof of Theorem~\ref{same sets of primes}]
It is easy to see from the definitions of $\P_{cons}$ and $\P_\ep$ that the prime $2$ is not in any of these sets; henceforth we may assume that $p$ is odd.

Given $\ep\in\{1,-1\}$, suppose that $p \in \P_\ep$, so that there exist positive integers $n,m$ with $p \nmid m$ such that $p^2 \mid D_\ep(n,m)$. By Corollary~\ref{now theyre consecutive}, there exists a nonzero $p$th power $x \mod{p^2}$ such that $x-1$ is also a nonzero $p$th power modulo $p^2$; therefore $p\in \P_{cons}$ as well.

Conversely, suppose that $p\in\P_{cons}$. By Lemma~\ref{finicky sign solution lemma}, we can choose $x$ such that $x$ and $1-x$ are both nonzero $p$th powers modulo $p^2$ and $1-x$ has even order modulo~$p^2$. Fix a primitive root $g\mod{p^2}$, and choose integers $1\le j,k\le p-2$ such that $x\equiv g^{pj}\mod{p^2}$ and $1-x\equiv g^{pk}\mod{p^2}$. We know the order of $1-x\equiv g^{pk}$ is even, so let $2t$ denote that order, noting that $1\le t\le (p-1)/2$ by Lemma~\ref{p2 cyclic lemma}(a). Then $((g^{pk})^t)^2 \equiv1\mod{p^2}$ but $(g^{pk})^t \not\equiv1\mod{p^2}$, and hence we must have $g^{pkt} \equiv-1\mod{p^2}$.
\begin{itemize}
  \item If $\ep=-1$, then setting $m=j$ yields $x^k \equiv (g^{pm})^k = -\ep(g^{pk})^m \equiv -\ep(1-x)^m\mod{p^2}$.
  \item If $\ep=1$, then setting $m=j-t$ yields $x^k \equiv (g^{pm+pt})^k = (g^{ptk}) \ep (g^{pk})^m \equiv -\ep(1-x)^m\mod{p^2}$. Note that $|m|\le p-2$; if $m=0$, then by Lemma~\ref{p2 cyclic lemma}(a) we can replace $m$ by $p-1$.
\end{itemize}
In either case, Proposition~\ref{going the other way prop} tells us that $p^2\mid D_\ep(n,m)$, and in all cases we know that $p\nmid m$. Therefore, $p\in \P_\ep$ as desired.
\end{proof}

In the introduction we saw that $59\in\P_\pm$, and so by Theorem~\ref{same sets of primes} we must have $59\in\P_{cons}$ as well; the consecutive residue classes $3^{59} \equiv 298\mod{59^2}$ and $4^{59} \equiv 299\mod{59^2}$ witness this membership (in fact there are $14$ pairs of consecutive $59$th powers modulo~$59^2$). On the other hand, Wieferich primes are obviously in $\P_{cons}$, and so they must be in each of $\P_\pm$ as well. One can work through the bijections in this section to see that if $p$ is a Wieferich prime, then $p^2$ divides $D_+(2p-1,1) = (2p-1)^{2p-1}+(2p-2)^{2p-2}$, for example. (Once discovered, this divisibility can also be proved more straightforwardly using Lemma~\ref{binomial_lem}.)

\section{Reducible trinomials}  \label{reducible section}

We continue to investigate the parallels between square divisors of $D_\pm(n,m)$ and pairs of consecutive $p$th powers modulo~$p^2$. We have already ruled out trivial occurrences of both objects: when $p$ divides $n$ and $m$ we trivially have $p^2 \mid D_\pm(n,m)$, while $-1,0,1$ are trivial consecutive $p$th powers for any prime. As it happens, however, there are more subtle examples of ``trivial'' occurrences of both objects, which turn out to correspond to each other. In the first instance, we find predictable square divisors of $D_\pm(n,m)$ when a corresponding trinomial $x^n\pm x^m\pm1$ is reducible with cyclotomic factors; in the second instance, we find that sixth roots of unity are predictable consecutive $p$th powers modulo $p^2$. Once these predictable occurrences are excluded, we see (Theorem~\ref{tilde thm}) that the ``sporadic'' occurrences are again in perfect correspondence.

Ljunggren~\cite[Theorem 3]{ljunggren} established that trinomials of the form $x^n \pm x^m \pm 1$ are irreducible, except for certain explicit situations when they have known cyclotomic factors. Since the statement below requires both greatest common divisors and ordered pairs, we shall temporarily write $\gcd(m,n)$ explicitly.

\begin{lemma}[Ljunggren]  \label{ljunggren lemma}
Let $n>m$ be positive integers, and let $\ep,\ep'\in\{-1,1\}$.
\begin{enumerate}
\item Suppose that $\gcd(n,m)=1$. The trinomial $x^n+\ep x^m+\ep'$ is irreducible except in the following situations:
\begin{enumerate}
\item if $(n,m)\equiv(1,5)\mod6$ or $(n,m)\equiv(5,1)\mod6$, and $\ep=1$, then $x^n+\ep x^m+\ep' = g(x)h(x)$ where $g(x)=x^2+\ep'x+1$ and $h(x)$ is irreducible;
\item if $(n,m)\equiv(2,1)\mod6$ or $(n,m)\equiv(4,5)\mod6$, and $\ep'=1$, then $x^n+\ep x^m+\ep' = g(x)h(x)$ where $g(x)=x^2+\ep x+1$ and $h(x)$ is irreducible;
\item if $(n,m)\equiv(1,2)\mod6$ or $(n,m)\equiv(5,4)\mod6$, and $\ep=\ep'$, then $x^n+\ep x^m+\ep' = g(x)h(x)$ where $g(x)=x^2+\ep x+1$ and $h(x)$ is irreducible.
\end{enumerate}
\item Suppose that $\gcd(n,m)=d>1$. If the trinomial $x^{n/d}+\ep x^{m/d}+\ep'$ factors as $g(x)h(x)$ according to one of the situations in part (a), then $x^n+\ep x^m+\ep'$ factors as $g(x^d)h(x^d)$ and $h(x^d)$ is irreducible; otherwise, $x^n+\ep x^m+\ep'$ is irreducible.
\end{enumerate}
\end{lemma}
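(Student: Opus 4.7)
My plan is to prove part (a) first and then deduce part (b) by substitution. For part (a), the strategy has two stages: first, identify every cyclotomic polynomial that can divide $x^n+\ep x^m+\ep'$; second, show that once such cyclotomic factors are divided out, the remaining polynomial is irreducible.

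For the first stage, observe that if $\zeta$ is a root of the trinomial lying on the unit circle, then the three unit complex numbers $\zeta^n$, $\ep\zeta^m$, and $\ep'$ sum to zero, so they form the vertices of an equilateral triangle centered at the origin. Their pairwise ratios are therefore primitive cube roots of unity, and in particular $\zeta^n$ and $\zeta^m$ are both sixth roots of unity. Since $\gcd(n,m)=1$, B\'ezout lets us write $\zeta=(\zeta^n)^a(\zeta^m)^b$ as a product of sixth roots of unity, so $\zeta$ itself is a sixth root of unity. The cases $\zeta=\pm1$ are immediately excluded because three values drawn from $\{-1,1\}$ cannot sum to zero, leaving only primitive cube or sixth roots of unity. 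A direct case analysis modulo~$6$ on $n$, $m$, $\ep$, $\ep'$ then singles out exactly the exceptions listed in (a.i)--(a.iii), with $g(x)$ equal to $\Phi_3(x)=x^2+x+1$ or $\Phi_6(x)=x^2-x+1$ as appropriate.

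The second stage is the substantive part and amounts to Ljunggren's original theorem. The plan is to suppose $x^n+\ep x^m+\ep'=P(x)Q(x)$ is a nontrivial factorization in $\Z[x]$ and deduce that one of $P$, $Q$ has all its roots on the unit circle, which by the first stage forces a cyclotomic factor from the explicit list. The main inputs are Kronecker's theorem, asserting that a monic integer polynomial whose roots all lie in the closed unit disc is a product of cyclotomic polynomials and a power of $x$, together with the rigidity of the trinomial shape: each root $\alpha$ satisfies $\alpha^{n-m}=-\ep-\ep'\alpha^{-m}$, which tightly constrains the joint distribution of root moduli. A careful analysis of which monomials can appear in the product of two integer polynomials whose product has only three nonzero terms---equivalently, a Newton polygon argument---then forces any factorization to arise from a cyclotomic divisor already identified. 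Applied to the cofactor $h(x)$ in each exceptional case (where one first notes that a second cyclotomic factor would give $g(x)^2\mid F(x)$, which the discriminant formula in Lemma~\ref{disc to D lemma} rules out), the same argument concludes that $h(x)$ is itself irreducible.

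For part (b), set $F(x)=x^n+\ep x^m+\ep'=f(x^d)$, where $f(y)=y^{n/d}+\ep y^{m/d}+\ep'$ has coprime exponents, so part (a) applies to $f$. If $f$ is irreducible, we need $f(x^d)$ to remain irreducible; if $f=g\cdot h$ with $g$ one of the listed cyclotomic quadratics, we need $h(x^d)$ to be irreducible. In both cases the question reduces via Capelli's theorem to checking that a root $\alpha$ of the irreducible factor is not a $p$th power in $\Q(\alpha)$ for any prime $p\mid d$, and is not of the form $-4\beta^4$ when $4\mid d$. These conditions can be verified from the trinomial identity satisfied by $\alpha$, since Galois conjugates of a $p$th power would distribute on the same circle in a regular pattern incompatible with the sharp modulus constraints imposed by the equation.

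The main obstacle is the second stage of part~(a): ruling out a nontrivial non-cyclotomic factorization of a trinomial requires the combinatorial machinery of Newton polygons combined with Kronecker's theorem, and is precisely the content of Ljunggren's paper~\cite{ljunggren}. In a self-contained treatment I would follow Selmer's streamlined proof, but in this paper it is most natural simply to cite Ljunggren.
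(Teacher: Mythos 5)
The paper offers no proof of this lemma: it is stated as a classical result and cited to Ljunggren \cite{ljunggren}, which is exactly where your proposal also lands, so in substance you take the same route (cite the hard irreducibility statement), and your preliminary unit-circle argument---three unit vectors summing to zero form an equilateral triangle, so any root of modulus one is a primitive cube or sixth root of unity, which pins down the case list in (a)---is correct. One caveat: describing the core step as ``Kronecker plus a Newton polygon argument'' does not reflect how Ljunggren's (or Selmer's) proof actually runs, which compares the coefficients of $f(x)\cdot x^nf(1/x)$ with those of the corresponding product for the putative factors; but since you ultimately defer to \cite{ljunggren}, as the paper does, this does not affect the validity of your write-up.
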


\begin{remark}
In part (b), the other factor $g(x^d)$ might not be irreducible; but since $g(x)$ is a cyclotomic polynomial, $g(x^d)$ will be a product of cyclotomic polynomials (of order dividing $6d$) that is easy to work out.
\end{remark}

\begin{lemma}  \label{quadratic form lemma}
Let $m$ and $n$ be positive integers and set $\gcd(n,m) = d$, and let $\ep,\ep'\in\{-1,1\}$. Suppose that $x^n+\ep x^m+\ep'$ is reducible, and let $g(x)$ and $h(x)$ be the polynomials described in Lemma~\ref{ljunggren lemma}, so that $x^n+\ep x^m+\ep' = g(x^d)h(x^d)$. Then
\[
\res\big( g(x^d),h(x^d) \big) = \bigg ( \frac{n^2 - mn + m^2}{3d^2} \bigg )^d.
\]
\end{lemma}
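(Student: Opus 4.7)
My plan is to reduce the resultant of the $d$-th substitutions to an ordinary resultant and then compute the latter by differentiating the trinomial identity at a cyclotomic root.

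\textbf{Step 1: Reduce to the case $d=1$.} Both $g(x^d)$ and $h(x^d)$ are monic, so by the product-of-evaluations formula for the resultant,
\[
\res\bigl(g(x^d),\,h(x^d)\bigr) = \prod_{\beta\,:\,g(\beta^d)=0} h(\beta^d).
\]
For each (nonzero) root $\alpha$ of $g$, there are exactly $d$ values $\beta$ with $\beta^d = \alpha$, and $h(\beta^d) = h(\alpha)$ for each of them. Hence
\[
\res\bigl(g(x^d),\,h(x^d)\bigr) = \Bigl(\prod_{\alpha\,:\,g(\alpha)=0} h(\alpha)\Bigr)^{d} = \res(g,h)^{d}.
\]
It therefore suffices to prove $\res(g,h) = (N^2 - NM + M^2)/3$, where $N=n/d$ and $M=m/d$ satisfy $\gcd(N,M)=1$ and $y^N + \ep y^M + \ep' = g(y)h(y)$.

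\textbf{Step 2: Extract $h(\alpha)$ by differentiating.} Differentiating $g(y)h(y) = y^N + \ep y^M + \ep'$ and evaluating at a root $\alpha$ of $g$ (where $g(\alpha)=0$) yields
\[
h(\alpha) = \frac{N\alpha^{N-1} + \ep M\alpha^{M-1}}{g'(\alpha)}.
\]
The two roots of $g(y) = y^2 \pm y + 1$ are complex conjugate roots of unity $\alpha,\bar\alpha$ with $\alpha\bar\alpha = 1$, so $\bar\alpha = \alpha^{-1}$. A direct computation (or the identity $g'(\alpha)g'(\bar\alpha) = -\disc(g)$) gives $g'(\alpha)g'(\bar\alpha) = -(-3) = 3$ in either sign case. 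Therefore
\[
\res(g,h) = h(\alpha)h(\bar\alpha) = \tfrac{1}{3}\bigl(N\alpha^{N-1} + \ep M\alpha^{M-1}\bigr)\bigl(N\alpha^{-(N-1)} + \ep M\alpha^{-(M-1)}\bigr).
\]
Expanding and using $\alpha\bar\alpha = 1$,
\[
\res(g,h) = \tfrac{1}{3}\Bigl(N^2 + M^2 + \ep NM(\alpha^{N-M} + \alpha^{M-N})\Bigr).
\]

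\textbf{Step 3: Case analysis to show $\ep(\alpha^{N-M} + \alpha^{M-N}) = -1$.} Set $T_k := \alpha^k + \alpha^{-k}$. Since $\alpha$ is a primitive $3$rd or $6$th root of unity, $T_k$ depends only on $k$ modulo the order of $\alpha$, and a quick calculation gives $T_{\pm 1} = -c$ and $T_{\pm 2} = -1$ whenever $g(y) = y^2 + cy + 1$ with $c \in \{\pm 1\}$. Now I would run through the three reducibility scenarios of Lemma~\ref{ljunggren lemma}(a), noting that the relevant values reduce to the coprime case $(N,M)$:
\begin{itemize}
\item Case (a)(i): $(N,M)\equiv(1,5)$ or $(5,1)\bmod 6$ and $\ep=1$, so $N-M\equiv\pm 4\equiv\mp 2\bmod 6$ and $\ep T_{N-M} = 1\cdot(-1) = -1$.
\item Case (a)(ii): $(N,M)\equiv(2,1)$ or $(4,5)\bmod 6$, $\ep'=1$, and $c=\ep$, so $T_{N-M} = T_{\pm 1} = -\ep$ and $\ep T_{N-M} = -\ep^2 = -1$.
\item Case (a)(iii): $(N,M)\equiv(1,2)$ or $(5,4)\bmod 6$, $\ep'=\ep$, and $c=\ep$, again giving $\ep T_{N-M} = -1$.
\end{itemize}
In every case, $\res(g,h) = (N^2 - NM + M^2)/3$, and raising to the $d$th power while substituting $N=n/d$, $M=m/d$ produces the claimed formula.

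The bulk of the work is the trichotomy in Step 3; the main potential pitfall is tracking the signs $\ep,\ep'$ and the sign $c$ in $g(y) = y^2 + cy + 1$ correctly in each case, since the three cases of Lemma~\ref{ljunggren lemma} assign $c$ to $\ep$, $\ep'$, or neither. Once one observes that $g$ always has discriminant $-3$ and that the three cases are precisely those making $\ep T_{N-M} = -1$, the computation collapses uniformly into the cyclotomic quadratic form $n^2 - nm + m^2 = m^2\Phi_6(n/m)$.
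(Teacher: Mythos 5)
Your proof is correct and follows essentially the same route as the paper's: evaluate $h$ at the roots $\alpha,\bar\alpha$ of the cyclotomic quadratic by differentiating the identity $g h = y^N+\ep y^M+\ep'$ (the paper phrases this via l'H\^opital's rule), and use $g'(\alpha)g'(\bar\alpha)=-\disc(g)=3$ together with $\alpha\bar\alpha=1$. If anything, your write-up is more complete than the paper's, which only works one representative case and omits the reduction $\res\bigl(g(x^d),h(x^d)\bigr)=\res(g,h)^d$; your uniform treatment of the sign trichotomy via $T_k=\alpha^k+\alpha^{-k}$ checks out in all three cases of Lemma~\ref{ljunggren lemma}.
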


\begin{proof}
We include only the proof of a single representative case, since the full proof contains no new ideas but a lot of repetition. Suppose that $n\equiv 1\mod 6$ and $m\equiv5\mod 6$, that $(n,m)=1$, and that $\ep=\ep'=1$, so that $x^n+x^m+1 = (x^2+x+1)h(x)$ by Lemma~\ref{ljunggren lemma}; we need to show that $\res\big(x^2+x+1,h(x)\big) = (n^2-mn+m^2)/3$. Let $\zeta=e^{2\pi i/3}$, so that the roots of $x^2+x+1$ are $\zeta$ and $\bar\zeta$; then by the definition of the resultant,
\[
\res\big(x^2+x+1,h(x)\big) = h(\zeta)h(\bar\zeta).
\]
By l'H\^opital's rule, we have
\[
h(\zeta) = \lim_{z\to\zeta} \frac{f(z)}{g(z)} = \lim_{z\to\zeta} \frac{f'(z)}{g'(z)} = \frac{f'(\zeta)}{g'(\zeta)} = \frac{n\zeta^{n-1} + m\zeta^{m-1}}{2\zeta+1} = \frac{n+m\zeta}{i\sqrt3}
\]
by the congruence conditions on $n$ and~$m$. Consequently,
\[
h(\zeta)h(\bar\zeta) = h(\zeta)\overline{h(\zeta)} = \frac{n+m\zeta}{i\sqrt3} \frac{n+m\bar\zeta}{-i\sqrt3} = \frac{n^2+mn(\zeta+\bar\zeta)+m^2\zeta\bar\zeta}3 = \frac{n^2-mn+m^2}3,
\]
as claimed.
\end{proof}

As a concrete application, we are now able to describe a parametric family of square divisors of $D_-(n,1)$.

\begin{proposition}  \label{strange divisibility proposition}
If $n\equiv2\mod 6$, then
\begin{equation}  \label{strange divisibility n}
\bigg( \frac{n^2-n+1}3 \bigg)^2 \quad\text{divides}\quad n^n - (n-1)^{n-1}.
\end{equation}
\end{proposition}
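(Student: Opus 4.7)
My plan is to derive the divisibility as a direct corollary of the factorization picture already laid out in the preceding two lemmas, together with Lemma~\ref{resultant_lemma} (discriminant of a product) and Lemma~\ref{disc to D lemma} (discriminant of a trinomial). The point is that when $n\equiv 2\mod 6$, the trinomial $x^n+x+1$ is reducible with a cyclotomic quadratic factor, and the resultant of the two factors is exactly $(n^2-n+1)/3$; squaring this resultant produces the claimed square divisor of the discriminant, which in turn equals $\pm\bigl(n^n-(n-1)^{n-1}\bigr)$.

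More concretely: first I would set $m=1$, so $(n,m)\equiv(2,1)\mod 6$ and $\gcd(n,m)=1$, and apply Lemma~\ref{ljunggren lemma}(a)(ii) with $\ep=\ep'=1$. This gives a factorization $x^n+x+1 = g(x)h(x)$ over $\Z[x]$, where $g(x)=x^2+x+1$ and $h(x)$ is an irreducible monic integer polynomial of degree $n-2$. Note in particular that $(n^2-n+1)/3$ is an integer, since $n\equiv 2\mod 6$ forces $n^2-n+1\equiv 3\mod 9$ — well, at least $\equiv 0\mod 3$, which is all we need.

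Next, I would compute both sides of the formula from Lemma~\ref{resultant_lemma}:
\[
\disc(x^n+x+1) = (-1)^{2(n-2)}\,\disc(g)\,\disc(h)\,\res(g,h)^2 = -3\,\disc(h)\,\res(g,h)^2,
\]
since $\disc(x^2+x+1)=-3$. By Lemma~\ref{quadratic form lemma} with $d=1$, $\res(g,h) = (n^2-n+1)/3$. On the other hand, Lemma~\ref{disc to D lemma} applied with $a=b=1$ and $n$ even gives $\ep=(-1)^{n-1}1^n1^{n-1}=-1$, so $|\disc(x^n+x+1)| = D_-(n,1) = n^n-(n-1)^{n-1}$. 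Combining these,
\[
n^n - (n-1)^{n-1} = 3\,|\disc(h)|\,\left(\frac{n^2-n+1}{3}\right)^{\!2},
\]
and since $\disc(h)\in\Z$, the squared factor divides the left-hand side.

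There really is no main obstacle — every ingredient has been prepared. The one thing worth double-checking is that $h(x)\in\Z[x]$ (so that $\disc(h)$ is an integer), which is immediate from monic polynomial division since $f$ and $g$ are monic with integer coefficients. A natural follow-up, which I would note but not prove here, is that an identical argument using case (a)(ii) with $\ep=-1$ handles $x^n-x+1$ in parallel; this suggests that the ``strange divisibility'' \eqref{strange divisibility n} is merely the simplest member of a whole family of such parametric square divisors attached to each reducible case in Lemma~\ref{ljunggren lemma}.
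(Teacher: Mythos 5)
Your proof is correct and follows essentially the same route as the paper's: apply Lemma~\ref{disc to D lemma} to identify $|\disc(x^n+x+1)|$ with $n^n-(n-1)^{n-1}$, use Ljunggren's factorization $x^n+x+1=(x^2+x+1)h(x)$, and conclude via Lemma~\ref{resultant_lemma} together with the resultant value $(n^2-n+1)/3$ from Lemma~\ref{quadratic form lemma}. Your version merely makes the sign bookkeeping and the integrality checks more explicit than the paper does.
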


\begin{remark}
Setting $n=6k+2$ shows that this result is equivalent to equation~\eqref{strange divisibility k}. Once discovered, that divisibility can be proved directly using the easily-verified congruences
\begin{align*}
-(6k+2)^3 &\equiv 1 - (18k+9)(12k^2+6k+1) \mod{(12k^2+6k+1)^2} \\
(6k+1)^3 &\equiv 1 + 18k(12k^2+6k+1) \mod{(12k^2+6k+1)^2},
\end{align*}
which hint at the connection to sixth roots of unity. Of course, this elementary proof sheds little light upon the true reason for the existence of the divisibility.
\end{remark}

\begin{proof}[Proof of Proposition~\ref{strange divisibility proposition}]
When $n\equiv2\mod6$, Lemma~\ref{disc to D lemma} (with $m=\ep=\ep'=1$) tells us that $\big| \disc(x^n+x+1) \big| = D_-(n,1) = n^n-(n-1)^{n-1}$. On the other hand, we see from Lemma~\ref{ljunggren lemma} that $x^n+x+1 = (x^2+x+1)h(x)$ for some polynomial $h(x)$. Therefore the square of the resultant of $x^2+x+1$ and $h(x)$ divides $n^n-(n-1)^{n-1}$ by Lemma~\ref{resultant_lemma}; and Lemma~\ref{quadratic form lemma} tells us that this resultant is exactly $(n^2-n+1)/3$.
\end{proof}

\begin{remark}
Many divisibility statements similar to~\eqref{strange divisibility n} can be established using the same method, starting with special cases of Lemma~\ref{ljunggren lemma} other than $n\equiv2\mod6$, $m=1$, and $\ep=\ep'=1$.
\end{remark}

We now show that these particular divisibilities are intimately related to the primitive sixth roots of unity modulo~$p^2$, when they exist. This relationship will allow us to classify certain square divisors of $D_\pm(m,n)$, and certain consecutive nonzero $p$th powers modulo $p^2$, as ``trivial'' and to give an equivalence (Theorem~\ref{tilde thm}) between the modified versions of $\P_\pm$ and $\P_{cons}$ defined in equations~\eqref{Peptilde def} and~\eqref{Pconstilde def} below.

\begin{lemma} \label{sixth and cube roots lemma}
Let $p\equiv1\mod6$ be a prime, and let $x$ be a primitive sixth root of unity modulo~$p^2$. Then $x-1$ is a primitive cube root of unity modulo~$p^2$. In particular, $x-1$ and $x$ are consecutive $p$th powers modulo~$p^2$.
\end{lemma}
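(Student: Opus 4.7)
The plan is to identify the primitive sixth root of unity $x$ modulo $p^2$ with a root of the cyclotomic polynomial $\Phi_6(t) = t^2 - t + 1$ reduced modulo $p^2$. Once I establish that $x^2 - x + 1 \equiv 0 \mod{p^2}$, the rearrangement $x - 1 \equiv x^2 \mod{p^2}$ immediately forces $x-1$ to be a primitive cube root of unity, since $x^2$ has order $6/\gcd(6,2) = 3$ in the cyclic group $(\Z/p^2\Z)^\times$.

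The substantive step is verifying $\Phi_6(x) \equiv 0 \mod{p^2}$. I would start from $x^6 \equiv 1 \mod{p^2}$ and use the cyclotomic factorization
\[
x^6 - 1 = (x-1)(x+1)(x^2 + x + 1)(x^2 - x + 1) \equiv 0 \mod{p^2}.
\]
To conclude that the last factor is the one divisible by $p^2$, I need to know the other three factors are units modulo $p^2$, equivalently, nonzero modulo $p$. This reduces to showing $x$ has order exactly $6$ modulo $p$ (not just modulo~$p^2$), which follows from the standard fact that the order of $x$ modulo $p^2$ is either the order of $x$ modulo $p$ or $p$ times it. Since $p \equiv 1 \mod{6}$ forces $p \ge 7 > 6$, the multiplier cannot be $p$, so both orders equal $6$; in particular $x \not\equiv \pm 1 \mod p$ and $x^3 \not\equiv 1 \mod p$, which makes each of the first three factors a unit.

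To finish, I would invoke Lemma~\ref{p2 cyclic lemma}(a): since $p \equiv 1 \mod 6$, both $6$ and $3$ divide $p-1$, so both $x$ and $x-1 \equiv x^2$ satisfy $y^{p-1}\equiv 1 \mod{p^2}$ and are therefore nonzero $p$th powers modulo~$p^2$. Their difference is $1$, so they are consecutive $p$th powers modulo~$p^2$.

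I do not anticipate a genuine obstacle; the only point demanding care is the lift of the cyclotomic factorization to $\Z/p^2\Z$, which is clean precisely because the unwanted factors remain units. The elegance of the statement is that the identity $\Phi_6(x) = x^2 - x + 1$ directly links ``sixth root of unity'' to ``consecutive'' via the shift by~$1$, and this is really the whole content of the lemma.
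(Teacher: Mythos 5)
Your proof is correct and follows essentially the same route as the paper: identify $x$ as a root of $x^2-x+1$, rewrite this as $x-1\equiv x^2$, and observe that $x^2$ has order $3$, then invoke Lemma~\ref{p2 cyclic lemma}(a) with $6\mid(p-1)$. If anything you are more careful than the paper's one-line argument, since you explicitly justify that $x^2-x+1\equiv 0$ holds modulo $p^2$ (not merely modulo $p$) by checking that the remaining factors of $x^6-1$ are units modulo $p^2$.
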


\begin{proof}
The primitive sixth root of unity $x$ is a root of the polynomial congruence $x^2-x+1\equiv0\mod p$, which means that $x-1\equiv x^2\mod p$; since $x^2$ has order $3$ when $x$ has order $6$, we conclude that $x-1$ is a primitive cube root of unity modulo~$p^2$. Since $3\mid6\mid(p-1)$, both $x^{p-1}$ and $(x-1)^{p-1}$ are congruent to $1\mod{p^2}$, and so both $x$ and $x-1$ are $p$th powers modulo $p^2$ by Lemma~\ref{p2 cyclic lemma}(a).
\end{proof}

\begin{lemma}
Let $n$ and $m$ be relatively prime integers, and let $p$ be a prime not dividing~$n$. Set $x \equiv 1 - mn^{-1} \mod{p}$. Then $x$ is a primitive sixth root of unity modulo $p^2$ if and only if $p^2 \mid (n^2-mn+n^2)$.
\end{lemma}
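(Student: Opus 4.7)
The plan is to reduce the statement to a single one-line algebraic identity. First I would fix the slightly ambiguous notation in the hypothesis by reading $x = 1 - mn^{-1}$ as an element of $\Z/p^2\Z$ with $n^{-1}$ the inverse modulo $p^2$ (available since $p \nmid n$); the statement only defines $x$ modulo $p$, but taking this specific lift is the natural reading. Then a direct expansion gives
\[
x^2 - x + 1 = (1 - mn^{-1})^2 - (1 - mn^{-1}) + 1 = 1 - mn^{-1} + m^2 n^{-2} = n^{-2}\bigl(n^2 - mn + m^2\bigr),
\]
and because $n^{-2}$ is a unit modulo $p^2$, this vanishes modulo $p^2$ if and only if $p^2 \mid n^2 - mn + m^2$.

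Next I would verify that $x^2 - x + 1 \equiv 0 \mod{p^2}$ is equivalent to $x$ being a primitive sixth root of unity modulo $p^2$. One direction is just the definition of the sixth cyclotomic polynomial $\Phi_6(X) = X^2 - X + 1$. For the converse, assume $x^2 - x + 1 \equiv 0 \mod{p^2}$; then $x^2 \equiv x - 1$, so $x^3 \equiv x(x-1) \equiv -1$, and $x^6 \equiv 1 \mod{p^2}$, forcing the order of $x$ to divide $6$. Orders $1$ and $3$ would imply $x^3 \equiv 1$, contradicting $x^3 \equiv -1$ since $p^2 \nmid 2$; order $2$ would mean $x \equiv \pm 1 \mod{p^2}$, in which case substitution into $x^2 - x + 1$ gives $1$ or $3$, never $0 \mod{p^2}$. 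Hence the order of $x$ is exactly $6$, as required.

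The core calculation is truly a one-liner, so the main obstacle is simply the notational point about which lift of $1 - mn^{-1} \mod p$ is intended. To reconcile with the $p$th-power-lift convention of Corollary~\ref{now theyre consecutive}, I would observe that once $p^2 \mid n^2 - mn + m^2$, the lift used above has order exactly $6$ (dividing $p - 1$) in $(\Z/p^2\Z)^\times$, so it satisfies $x^{p-1} \equiv 1 \mod{p^2}$ and is itself a $p$th power; by the uniqueness clause of Lemma~\ref{p2 cyclic lemma}(b) it then coincides with the paper's $p$th-power lift, so the two conventions agree precisely when the divisibility holds.
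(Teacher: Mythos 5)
Your proof is correct and follows essentially the same route as the paper: the paper computes $x(1-x) \equiv 1 - (n^2-mn+m^2)n^{-2} \mod{p^2}$, which is just your identity $x^2-x+1 \equiv n^{-2}(n^2-mn+m^2)$ rewritten, and then invokes the same equivalence with $\Phi_6$. Your extra care about the order-$6$ verification and the choice of lift of $x$ modulo $p^2$ fills in details the paper leaves implicit, and you correctly read $n^2-mn+n^2$ in the statement as the typo for $n^2-mn+m^2$.
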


\begin{remark}
It is easy to derive the fact that the lemma is still valid if $(n,m)>1$, provided that the expression $n^2-mn+m^2$ is replaced by $(n^2-mn+m^2)/(n,m)^2$.
\end{remark}

\begin{proof}
We begin by noting that $x$ being a primitive sixth root of unity modulo $p^2$ is equivalent to $x^2-x+1\equiv0\mod{p^2}$, which in turn is equivalent to $x(1-x)\equiv1\mod{p^2}$. By the definition of~$x$,
\[
x(1-x) \equiv (1 - mn^{-1}) mn^{-1} = (mn - m^2)n^{-2} = 1 - (n^2-mn+m^2)n^{-2} \mod{p^2}.
\]
This congruence shows that $x(1-x)\equiv1\mod{p^2}$ if and only if $n^2-mn+m^2\equiv0\mod{p^2}$, which is equivalent to the statement of the lemma (in light of the first sentence of this proof).
\end{proof}

For $\ep\in\{-1,1\}$, define
\begin{multline}
\tilde \P_\ep = \smash{\bigg\{} p \mbox{ prime}\colon \text{there exist positive integers $n,m$ with $p \nmid m$ such that } \\
p^2 \mid D_\ep(n,m) \text{ but } p^2 \nmid \frac{n^2-mn+m^2}{(m,n)^2} \bigg\}
\label{Peptilde def}
\end{multline}
and
\begin{multline}
\tilde \P_{cons} = \{ p \mbox{ prime}\colon \mbox{there exist consecutive nonzero $p$th powers modulo } p^2 \text{,} \\ \text{other than $(x-1,x)$ where $x$ is a primitive sixth root of unity} \}.
\label{Pconstilde def}
\end{multline}
For example, $\P_{cons}$ contains every prime congruent to $1\mod6$ by Lemma~\ref{sixth and cube roots lemma}; however, the smallest two primes in $\tilde\P_{cons}$ are $59$ and $79$. Note that $79\equiv1\mod6$ is still in $\tilde\P_{cons}$: even though we have ruled out the sixth roots of unity, there are still other pairs of consecutive nonzero $79$th powers modulo $79^2$. The intuition is that once all trivial square divisibilities (including those arising from cyclotomic factors) and trivial consecutive $p$th powers modulo $p^2$ (including those arising from primitive sixth roots of unity) have been accounted for, the sets $\tilde \P_\ep$ and $\tilde \P_{cons}$ record only ``sporadic'' square factors and consecutive $p$th powers.

The techniques of Section~\ref{correspondence section}, together with the additional results in this section, allow us to establish the following variant of Theorem~\ref{same sets of primes}; we omit the mostly redundant details.

\begin{theorem} \label{tilde thm}
We have $\tilde\P_+ = \tilde\P_- = \tilde\P_{cons}$.
\end{theorem}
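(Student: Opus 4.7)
The plan is to upgrade the bijection $\alpha_{p,m,\ep}$ of Theorem~\ref{correspondence theorem} so that it restricts to a bijection between the non-trivial elements of $\A_{p,m,\ep}$ and those of $\B_{p,m,\ep}$. Concretely, I would show that $n \in \A_{p,m,\ep}$ satisfies the divisibility $p^2 \mid (n^2-mn+m^2)/(n,m)^2$ (excluded in the definition of $\tilde\P_\ep$) if and only if the pair $(x,k) = \alpha_{p,m,\ep}(n)$ has $x$ a primitive sixth root of unity modulo $p^2$ (excluded in the definition of $\tilde\P_{cons}$). This is essentially the content of the lemma just preceding the theorem, which characterizes when the straight lift of $1 - mn^{-1}$ to $\Z/p^2\Z$ is a primitive sixth root of unity. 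The subtle point is that $\alpha_{p,m,\ep}$ returns the $p$th power lift of $1 - mn^{-1}$ modulo $p$ rather than the straight lift; I would reconcile these by observing that a primitive sixth root of unity $\zeta$ modulo $p^2$ satisfies $\zeta^{p-1}\equiv 1 \mod{p^2}$ (since $6 \mid p-1$ whenever such a $\zeta$ exists), so $\zeta$ is itself a $p$th power by Lemma~\ref{p2 cyclic lemma}(a), and the two lifts coincide in the case that matters.

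Given this refined correspondence, the inclusion $\tilde\P_\ep \subseteq \tilde\P_{cons}$ follows immediately: a witness $(n,m)$ for $p \in \tilde\P_\ep$ maps via $\alpha_{p,m,\ep}$ to a pair $(x-1,x)$ of consecutive nonzero $p$th powers with $x$ not a primitive sixth root of unity, certifying $p \in \tilde\P_{cons}$. For the reverse inclusion $\tilde\P_{cons} \subseteq \tilde\P_\ep$, I would strengthen Lemma~\ref{finicky sign solution lemma} so that, starting from a pair of consecutive nonzero $p$th powers modulo $p^2$ other than one of the form $(\zeta-1,\zeta)$ with $\zeta$ a primitive sixth root of unity, the auxiliary element $x$ it produces is itself not a primitive sixth root of unity; applying $\beta_{p,m,\ep}$ to such an $x$ then yields a witness for $p \in \tilde\P_\ep$. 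The equality $\tilde\P_+ = \tilde\P_-$ is handled by the same sign-swap trick used at the end of Theorem~\ref{same sets of primes}, based on the parity of the order of $1-x$ modulo $p^2$; since this trick alters $m$ but leaves $x$ unchanged, it automatically preserves the non-triviality of the starting pair.

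The main obstacle is the case analysis needed to strengthen Lemma~\ref{finicky sign solution lemma}: its three branches---based on the parities of the orders of $y+1$, $-y$, and $1+z$---involve inversion and sign changes, and one must verify that each branch sends non-trivial inputs to non-trivial outputs. Fortunately the set of trivial pairs $(\zeta-1,\zeta)$ with $\zeta$ a primitive sixth root of unity is closed under the multiplicative symmetries $y \mapsto y^{-1}$ and $y \mapsto -y$ appearing in that proof (both are expressible in terms of multiplication by cube or sixth roots of unity), so the check is finite and elementary. Once the refined correspondence and this strengthening of Lemma~\ref{finicky sign solution lemma} are in place, the argument is otherwise a direct adaptation of the proof of Theorem~\ref{same sets of primes}, in keeping with the paper's remark that the ``mostly redundant'' details are omitted.
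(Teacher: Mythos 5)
Your overall strategy---refine the bijections of Theorem~\ref{correspondence theorem} so that the cyclotomic ``trivialities'' on the two sides correspond, then rerun the proof of Theorem~\ref{same sets of primes}---is exactly what the paper intends (it omits the details). The second half of your plan is sound: the cyclotomic orbit $\{\zeta,\zeta^{-1}\}$ is closed under the $S_3$ action of Lemma~\ref{orbit_lem}, so each branch of Lemma~\ref{finicky sign solution lemma} sends non-cyclotomic roots of $f_p$ to elements $x$ that are not primitive sixth roots of unity, and the sign-swap for $\tilde\P_+=\tilde\P_-$ leaves $x$ untouched.

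However, the central equivalence in your first paragraph is false as stated, and your own reconciliation of the two lifts is what breaks it. Since any primitive sixth root of unity modulo $p^2$ is automatically a $p$th power, the $p$th-power lift $x$ returned by $\alpha_{p,m,\ep}$ is a primitive sixth root of unity modulo $p^2$ as soon as $1-mn^{-1}$ is a primitive sixth root of unity modulo $p$, i.e.\ as soon as $p$ \emph{to the first power} divides $(n^2-mn+m^2)/(n,m)^2$; the condition $p^2\mid(n^2-mn+m^2)/(n,m)^2$ is strictly stronger, and is not even well defined on the residue classes modulo $p(p-1)$ that constitute $\A_{p,m,\ep}$. Concretely, take $p=7$ and $(n,m)=(5,1)$: then $49\mid D_+(5,1)=3381$ while $49\nmid n^2-mn+m^2=21$, yet $\alpha_{7,1,+}(5)$ has $x\equiv 5^7\equiv19\mod{49}$, which is a primitive sixth root of unity, so this witness---which your argument would accept as certifying $7\in\tilde\P_{cons}$---lands on the excluded cyclotomic pair $(18,19)$, and in fact $7\notin\tilde\P_{cons}$. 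So the inclusion $\tilde\P_\ep\subseteq\tilde\P_{cons}$ does not follow as you claim. The repair is to read the exclusion in~\eqref{Peptilde def} as $p\nmid(n^2-mn+m^2)/(m,n)^2$ (equivalently, $1-mn^{-1}$ is not a primitive sixth root of unity modulo $p$), which is the condition that actually corresponds under $\alpha_{p,m,\ep}$ to avoiding the cyclotomic pairs, and which the resultant computation in Lemma~\ref{quadratic form lemma} suggests is the intended one; with that reading the rest of your argument goes through.
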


The relationship between primitive sixth roots of unity and certain nonsquarefree values of $D_\pm(n,m)$ is not only an interesting and unexpected pattern: it also reduces the amount of explicit computation we have to do in subsequent sections.

We conclude this section by applying the strange divisibility in Proposition~\ref{strange divisibility proposition} to the construction of a new family of ``$abc$ triples''. Let $R(n)$ denote the radical of $n$, that is, the product of all the distinct primes dividing $n$, without multiplicity. Recall that the {\em $abc$ conjecture} states that if $a,b,c$ are relatively prime positive integers satisfying $a+b=c$, then $c\ll_\ep R(abc)^{1+\ep}$ for every $\ep>0$, or equivalently $R(abc) \gg_\ep c^{1-\ep}$ for every $\ep>0$. It is known that the more wishful inequality $R(abc) \ge \eta c$ is false for every constant $\eta>0$, and it is useful to have simple families of examples that demonstrate its falsity. It turns out that we can construct such examples out of the divisibility exhibited in Proposition~\ref{strange divisibility proposition}.

\begin{lemma} \label{abc pomerance lemma}
$7^{k+1}$ divides $8^{7^k}-1$ for any nonnegative integer $k$.
\end{lemma}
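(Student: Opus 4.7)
The plan is to prove this by induction on $k$, since it is essentially a lifting-the-exponent statement with base $a=8$, exponent prime $p=7$, and the fortunate coincidence that $v_7(8-1)=1$.

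For the base case $k=0$, the claim reduces to $7 \mid 8-1$, which is immediate. For the inductive step, assume $7^{k+1} \mid 8^{7^k}-1$, so that we may write $8^{7^k} = 1 + 7^{k+1}m$ for some integer $m$. Then I would expand
\[
8^{7^{k+1}} = \bigl( 8^{7^k} \bigr)^7 = (1+7^{k+1}m)^7 = \sum_{j=0}^{7} \binom{7}{j} (7^{k+1}m)^j
\]
via the binomial theorem. The $j=0$ term cancels the $-1$. For the $j=1$ term, one gets $7\cdot 7^{k+1}m = 7^{k+2}m$, which is divisible by $7^{k+2}$. For each $j\ge 2$, the factor $(7^{k+1})^j$ already contributes $v_7 \ge 2(k+1) \ge k+2$ (since $k\ge 0$), so every remaining term is divisible by $7^{k+2}$ regardless of whether $\binom{7}{j}$ supplies any extra factors of $7$. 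Summing gives $7^{k+2} \mid 8^{7^{k+1}}-1$, completing the induction.

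There is no real obstacle: the only mildly delicate point is checking that the contribution from $j=1$, where $\binom{7}{1}=7$ is needed to promote $v_7$ from $k+1$ up to $k+2$, is the binding case, while all higher-$j$ terms are comfortably more divisible. If one preferred a one-line proof, the same conclusion follows from the lifting-the-exponent lemma applied to $a=8$, $b=1$, $n=7^k$, which yields $v_7(8^{7^k}-1) = v_7(8-1) + v_7(7^k) = k+1$ (giving equality, in fact); but the inductive binomial argument is elementary and self-contained, which fits the style of the preceding lemmas.
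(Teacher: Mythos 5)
Your proof is correct, and it is an induction like the paper's, but the inductive step is carried out with a different decomposition. The paper writes
\[
8^{7^k}-1 = \bigl(8^{7^{k-1}}-1\bigr)\bigl(8^{6\cdot 7^{k-1}}+8^{5\cdot 7^{k-1}}+\cdots+8^{7^{k-1}}+1\bigr),
\]
invokes the induction hypothesis on the first factor, and observes that the second factor is a sum of seven terms each congruent to $1 \bmod 7$, hence divisible by $7$. You instead write $8^{7^k}=1+7^{k+1}m$ and expand $(1+7^{k+1}m)^7$ by the binomial theorem, checking that the $j=1$ term contributes exactly one extra factor of $7$ and that the terms with $j\ge 2$ have $7$-adic valuation at least $2(k+1)\ge k+2$. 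Both arguments are sound and equally elementary; the paper's factorization is a bit shorter on the page, while your binomial bookkeeping is the standard proof of the lifting-the-exponent lemma and, as you note, actually yields the sharper statement that $v_7\bigl(8^{7^k}-1\bigr)$ equals $k+1$ exactly (provided one checks $7\nmid m$ at the start, i.e.\ $v_7(8-1)=1$), which the paper's version does not bother to extract since only the divisibility is needed for Proposition~\ref{abc pomerance proposition}.
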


\begin{proof}
We proceed by induction on $k$; the case $k=0$ is trivial. When $k\ge1$, we can write
\[
8^{7^k}-1 = (8^{7^{k-1}}-1)(8^{6\cdot 7^{k-1}}+8^{5\cdot 7^{k-1}}+\cdots+8^{7^{k-1}}+1).
\]
The first factor on the right-hand side is divisible by $7^k$ by the induction hypothesis, while the second factor is congruent to $1+1+1+1+1+1+1\mod7$ and hence is divisible by~$7$.
\end{proof}

Note that simply setting $(a,b,c) = (1,8^{7^k}-1,8^{7^k})$ yields
\begin{equation} \label{3better}
R(abc) = R(a)R(b)R(c) = 2R(b) \le 2b/7^k < 2c/7^k = (2\log8)c/\log c,
\end{equation}
which (taking $k$ large enough in terms of $\eta$) is enough to falsify any wishful inequality $R(abc) \ge \eta c$. The similar example $(a,b,c) = (1,3^{2^k}-1,3^{2^k})$, attributed to Jastrzebowski and Spielman (see~\cite[pages 40--41]{lang}), yields the inequality $R(abc) < (\frac32\log 3)c/\log c$ that has a slightly better leading constant. We now give a new construction, different from the ones currently appearing in the literature, that results in an inequality of the same order of magnitude as these examples, but with a slightly worse leading constant. The specific form for $n$ in the following construction was suggested by Carl Pomerance.

\begin{proposition} \label{abc pomerance proposition}
Given any positive integer $k$, define $n=8^{7^k}$ and
\begin{align*}
a = (n-1)^{n-1}, \quad b = n^n-(n-1)^{n-1}, \quad c = n^n,
\end{align*}
so that $a$, $b$, and $c$ are pairwise relatively prime and $a+b=c$. Then $R(abc) < 6b/7^kn$. In particular,
\begin{equation} \label{pomerance R upper bound}
R(abc) < 6\log 8\, \frac c{\log c}.
\end{equation}
\end{proposition}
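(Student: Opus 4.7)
The plan is to combine the square divisibility from Proposition~\ref{strange divisibility proposition} with the $7$-adic divisibility from Lemma~\ref{abc pomerance lemma} to obtain two large square factors controlling the radicals of $b$ and $a$ respectively. The routine setup is immediate: since $\gcd(n-1,n)=1$ we have $\gcd(a,c)=1$, and since $b=c-a$ the triple is pairwise coprime; the identity $a+b=c$ holds by construction. Because $n=2^{3\cdot 7^k}$ is a pure power of~$2$, we have $R(c)=R(n)=2$ and $R(a)=R(n-1)$.

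Next I would verify the hypothesis $n\equiv 2\mod 6$ needed to apply Proposition~\ref{strange divisibility proposition}: the powers of~$2$ modulo~$6$ cycle through $2,4,2,4,\dots$, and since the exponent $3\cdot 7^k$ is odd, $n\equiv 2\mod 6$. The proposition then yields $q^2\mid b$ with $q=(n^2-n+1)/3$; writing $b=q^2 s$ gives $R(b)=R(qs)\leq qs=b/q$. Independently, Lemma~\ref{abc pomerance lemma} supplies $7^{k+1}\mid(n-1)$, so $R(n-1)\leq 7(n-1)/7^{k+1}=(n-1)/7^k$.

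Combining the three bounds via pairwise coprimality,
\[
R(abc)=R(a)R(b)R(c)\leq 2\cdot\frac{n-1}{7^k}\cdot\frac{b}{q}=\frac{6(n-1)b}{7^k(n^2-n+1)}<\frac{6b}{7^k n},
\]
where the last inequality uses $n(n-1)<n^2-n+1$. For the consequence~\eqref{pomerance R upper bound} I would then use $b<c$ together with $\log c=n\log n=7^k n\log 8$ to conclude
\[
\frac{6b}{7^k n}<\frac{6c}{7^k n}=6\log 8\cdot\frac{c}{\log c}.
\]

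There is no substantive obstacle here: both divisibilities are already proved, and only the arithmetic of constants needs care. What is conceptually pleasant is that the cyclotomic square divisor coming from the factor $x^2+x+1$ of $x^n+x+1$ combines almost losslessly with the $7$-adic divisor of $n-1$, essentially saturating the available radical savings; this is why the method produces the constant $6\log 8$ rather than anything smaller.
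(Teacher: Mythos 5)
Your proof is correct and follows essentially the same route as the paper: $R(c)=2$, $R(a)\le (n-1)/7^k$ from Lemma~\ref{abc pomerance lemma}, $R(b)\le b/\bigl((n^2-n+1)/3\bigr)$ from Proposition~\ref{strange divisibility proposition} (after checking $n\equiv2\bmod 6$), multiplied together via pairwise coprimality and finished with $n(n-1)<n^2-n+1$ and $\log c = 7^k n\log 8$. The only difference is that you spell out the coprimality and the $n\equiv 2\bmod 6$ verification in slightly more detail than the paper does.
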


\begin{proof}
It suffices to establish the first inequality, since the second one follows upon noting that $b<c$ and $\log c = n\log n = n\cdot 7^k\log 8$. Obviously $R(c)=2$; also, $R(a) = R(n-1)$, but $7^{k+1}\mid(n-1)$ by Lemma~\ref{abc pomerance lemma}, and so $R(n-1) \le (n-1)/7^k$. Equation~\eqref{strange divisibility n} tells us that $\big((n^2-n+1)/3\big)^2$ divides $b$ (note that $n\equiv2\mod6$ because $7^k$ is odd), and so $R(b)$ is at most $b/\big((n^2-n+1)/3\big)$. Since $a$, $b$, and $c$ are pairwise relatively prime, we conclude that
\begin{equation*}
R(abc) = R(a)R(b)R(c) \le \frac{n-1}{7^k} \frac b{(n^2-n+1)/3} \cdot 2 < \frac{6b}{7^kn}
\end{equation*}
as claimed.
\end{proof}

There do exist constructions of $abc$ triples with noticeably smaller radicals (see the seminal paper~\cite{ST} in this regard), although the methods to produce such triples are far more complicated than the elementary arguments given above.

\section{Orbits of roots}  \label{orbits section}

In this section, we recall that a certain small group of automorphisms preserves the roots of the polynomial $f_p(x) = ((x+1)^p - x^p - 1)/p$ defined in equation~\eqref{fp def}. We then use this structure to justify a significant refinement of Conjecture~\ref{just Pconstilde conjecture}, which we compare to data obtained from calculation.

\begin{lemma}
\label{orbit_multiple_roots_lem}
Let $p$ be an odd prime, and let $x$ be an integer such that $f_p(x) \equiv 0 \mod{p}$. Then $f_p(-x-1) \equiv 0 \mod{p}$. Furthermore, if $p \nmid x$ then $f_p(x^{-1}) \equiv 0 \mod{p}$, where $x^{-1}$ is any integer satisfying $xx^{-1}\equiv1\mod p$.
\end{lemma}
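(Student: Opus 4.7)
The plan is to reinterpret the hypothesis $f_p(x)\equiv0\mod p$ in its lifted form, namely $(x+1)^p-x^p-1\equiv0\mod{p^2}$ (equivalently, $(x+1)^p\equiv x^p+1\mod{p^2}$), and then to exhibit the two claimed roots as arising from the symmetries of this congruence. I will use freely that $f_p(y)\mod p$ depends only on $y\mod p$, so in checking $f_p(-x-1)$ or $f_p(x^{-1})$ I am free to work with any representative of the relevant residue class.

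For the first assertion, I would just compute directly. Since $p$ is odd, $(-x)^p=-x^p$ and $(-x-1)^p=-(x+1)^p$. Therefore
\[
p\,f_p(-x-1) = (-x)^p-(-x-1)^p-1 = -x^p+(x+1)^p-1 = (x+1)^p-x^p-1,
\]
which is $\equiv0\mod{p^2}$ by hypothesis. Hence $f_p(-x-1)\equiv0\mod p$. This step is essentially trivial.

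The second assertion is the substantive one. I would pick any integer $y$ with $xy\equiv1\mod p$, so that $xy=1+kp$ for some integer $k$, and aim to prove $(y+1)^p-y^p-1\equiv0\mod{p^2}$. Two auxiliary congruences drive the argument. First, $(xy)^p=(1+kp)^p\equiv1\mod{p^2}$ by the binomial theorem (all terms with $j\ge1$ carry at least $p\cdot p=p^2$). Second, the same binomial expansion applied to $(1+y+kp)^p$ gives
\[
(xy+y)^p=(1+y+kp)^p\equiv(1+y)^p\mod{p^2},
\]
again because every term with $j\ge1$ is divisible by $p^2$. Multiplying the hypothesis $(x+1)^p\equiv x^p+1\mod{p^2}$ by $y^p$ yields $(xy+y)^p\equiv(xy)^p+y^p\mod{p^2}$, and substituting the two auxiliary congruences on the left and right converts this into $(y+1)^p\equiv1+y^p\mod{p^2}$. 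This is exactly $f_p(y)\equiv0\mod p$, i.e.\ $f_p(x^{-1})\equiv0\mod p$.

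The main obstacle is the second part, and the crux is precisely the lifting step: the hypothesis lives modulo $p^2$ but the inverse $x^{-1}$ is only specified modulo~$p$, so one must show that the discrepancy $kp$ between $xy$ and $1$ washes out when raised to the $p$th power. Once one notices that both $(1+kp)^p$ and $(1+y+kp)^p$ reduce cleanly modulo~$p^2$ by truncating the binomial expansion at the linear term (which still carries a factor of $p^2$), the rest of the proof is a one-line manipulation. No deeper machinery from Section~\ref{prelim section} is needed beyond this elementary binomial computation, which is in the same spirit as Lemma~\ref{binomial_lem}.
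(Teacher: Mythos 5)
Your proof is correct. It reaches the same algebraic identities as the paper but by a slightly different route: the paper observes that $f_p(-x-1)=f_p(x)$ and $x^p f_p(x^{-1})=f_p(x)$ hold as identities of rational functions in $\Z(x)$ (using only that $p$ is odd) and then reduces them modulo $p$, whereas you stay with the lifted congruence $(x+1)^p\equiv x^p+1\pmod{p^2}$ and clear denominators by hand, multiplying by $y^p$ where $xy=1+kp$. That multiplication by $y^p$ is exactly the paper's multiplication by $x^p$ in disguise, so the heart of the two arguments coincides. What your version buys is an explicit verification of a point the paper leaves implicit: that replacing the formal inverse $1/x$ by an arbitrary integer representative $y$ of the class of $x^{-1}$ modulo $p$ is harmless, because the discrepancy $kp$ washes out modulo $p^2$ after raising to the $p$th power (your two binomial computations, which are indeed in the spirit of Lemma~\ref{binomial_lem}). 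In the paper's framing this is automatic---$f_p$ has integer coefficients since $p$ divides $\binom{p}{j}$ for $0<j<p$, so $f_p(y)\bmod p$ depends only on $y\bmod p$, and the rational-function identity can be evaluated directly in $\F_p$---but your hands-on check is a legitimate, self-contained substitute.
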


\begin{proof} 
Both assertions follow from the rational function identities
\begin{equation*}
f_p(-x-1) = \frac{((-x-1)+1)^p - (-x-1)^p - 1}p = \frac{(-x)^p + (x+1)^p-1}p = f_p(x)
\end{equation*}
and
\begin{equation*}
x^p f_p(x^{-1}) = x^p \frac{(x^{-1}+1)^p - (x^{-1})^p -1 }p = \frac{(1+x)^p - 1 - x^p}p = f_p(x).
\end{equation*}
\end{proof}

Each map $x \mapsto -x-1$ and $x \mapsto \frac1x$ is an involution of $\Z(x)$, and it turns out that their composition has order~$3$. They therefore generate a group of six automorphisms of $\Z(x)$, characterized by their images of $x$:
\[
x \mapsto x, \quad x \mapsto -x-1, \quad x \mapsto -\frac1{x+1}, \quad x \mapsto -\frac x{x+1}, \quad x \mapsto -\frac{x+1}x, \quad x \mapsto \frac1x.
\]
One can also consider these as automorphisms of $\Z/p\Z\cup\{\infty\}$; the following proposition characterizes when some of the corresponding six images coincide. These observations have been made before---see for example~\cite[Lecture VIII, equation (1.3)]{13}.

\begin{lemma}
\label{orbit_lem}
Let $p$ be an odd prime, and let $x\in \Z/p\Z\cup\{\infty\}$. The orbit
\[
\bigg\{ x, -x-1, -\frac1{x+1}, -\frac x{x+1}, -\frac{x+1}x, \frac1x \bigg\} \subset \Z/p\Z\cup\{\infty\}
\]
consists of six distinct values except in the following cases:
\begin{itemize}
\item every prime $p$ has the orbit $\{ 0, -1, -1, 0, \infty, \infty \}$;
\item every prime $p$ has the orbit $\{ 1, -2, -2^{-1}, -2^{-1}, -2, 1 \}$;
\item every prime $p\equiv1\mod6$ has the orbit $\{ \zeta, \zeta^{-1}, \zeta, \zeta^{-1}, \zeta, \zeta^{-1} \big\}$, where $\zeta$ is a primitive cube root of unity modulo $p$.
\end{itemize}
These orbits are called the trivial orbit, the Wieferich orbit, and the cyclotomic orbit, respectively.
\end{lemma}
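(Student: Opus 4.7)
The plan is to observe that the six rational functions listed form a group $G$ of order six acting on $\mathbb{P}^1(\F_p) = \F_p \cup \{\infty\}$, and then to classify the points of $\mathbb{P}^1(\F_p)$ with nontrivial stabilizer by solving $\sigma(x)=x$ for each of the five nonidentity $\sigma\in G$. Indeed, writing $a(x)=-x-1$ and $b(x)=1/x$, both of which are involutions of $\mathbb{P}^1(\F_p)$, a direct computation gives $bab = aba : x\mapsto -x/(x+1)$, so $a$ and $b$ generate a group isomorphic to $S_3$ whose six elements $\{e,a,b,ab,ba,aba\}$ are exactly the six maps displayed above. Since the orbit size of $x$ divides $|G|=6$, the orbit of $x$ has strictly fewer than six elements if and only if some nonidentity element of $G$ fixes $x$.

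Next I would enumerate the fixed-point equations of the five nonidentity elements, which are all easy quadratic or linear conditions:
\begin{align*}
-x-1 &= x &&\Longleftrightarrow\ 2x+1\equiv 0 \mod p, \\
1/x &= x &&\Longleftrightarrow\ x^2\equiv 1\mod p, \\
-x/(x+1) &= x &&\Longleftrightarrow\ x(x+2)\equiv 0\mod p, \\
-1/(x+1) &= x &&\Longleftrightarrow\ x^2+x+1\equiv 0\mod p, \\
-(x+1)/x &= x &&\Longleftrightarrow\ x^2+x+1\equiv 0\mod p,
\end{align*}
together with the observation that $\infty$ is fixed by $a$ (the map $x\mapsto -x-1$) and by nothing else. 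The solution set of these equations is $\{\infty, 0, -1\} \cup \{1,-2,-2^{-1}\} \cup \{\zeta,\zeta^{-1}\}$, where the last pair is nonempty precisely when $p\equiv 1\mod 6$ (so that $\F_p^\times$ contains a primitive cube root of unity).

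Finally, I would verify that each of these three sets is exactly an orbit under $G$, simply by evaluating the five other maps at a representative. For instance, plugging $x=\infty$ into the six formulas yields $\{\infty,\infty,0,-1,-1,0\}$, confirming the trivial orbit; plugging in $x=1$ yields $\{1,-2,-2^{-1},-2^{-1},-2,1\}$, confirming the Wieferich orbit; and using $\zeta+1\equiv -\zeta^{-1}\mod p$ yields $\{\zeta,\zeta^{-1},\zeta,\zeta^{-1},\zeta,\zeta^{-1}\}$, confirming the cyclotomic orbit. These three orbits account for every solution of every fixed-point equation above, and any $x\in\mathbb{P}^1(\F_p)$ not in their union has trivial stabilizer and hence a full orbit of six distinct elements.

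The only mildly delicate point is the bookkeeping at $\infty$ and the observation that the two equations $-1/(x+1)=x$ and $-(x+1)/x=x$ both reduce to the cube-root-of-unity condition, which is why the cyclotomic orbit has size two (stabilizer of order three) rather than size three. Everything else is routine.
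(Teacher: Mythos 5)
Your proposal is correct and is essentially the paper's argument: the paper simply says the lemma is verified by setting the various images equal and solving for $x$, which is exactly what your fixed-point equations do (two images $\sigma(x)$ and $\tau(x)$ coincide precisely when $\tau^{-1}\sigma$ fixes $x$, so your five equations cover all fifteen pairwise coincidences). The orbit--stabilizer framing is a tidy way to organize that computation, but it is the same verification.
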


\begin{proof}
The lemma is easy to verify by setting the various pairs of images equal and solving for $x$, recalling that primitive cube roots of unity are precisely roots of the polynomial $x^2+x+1$.
\end{proof}

Given an odd prime $p$, define a {\em six-pack} to be a set of six distinct elements of $\F_p$, of the form $\big\{ x, -x-1, -\frac1{x+1}, -\frac x{x+1}, -\frac{x+1}x, \frac1x \big\}$ all of which are roots of $f_p(x)$. Also recall that a {\em Wieferich prime} is a prime $p$ for which $p^2 \mid (2^p-2)$. To date, exhaustive computational search~\cite{doraisklyve} up to $6.7\times10^{15}$ has yielded only two Wieferich primes, namely $1093$ and $3511$. (An ongoing computational project~\cite{primegrid} has extended this range to nearly $1.5\times10^{17}$ as of August 2014.)

\begin{proposition} \label{six-packs prop}
When $p\equiv1\mod 6$, the set of roots of $f_p(x)$ modulo $p$ consists of $\{0,-1,\zeta,\zeta^{-1}\}$ together with zero or more disjoint six-packs, where $\zeta$ is a primitive cube root of unity; when $p\equiv5\mod 6$, the set of roots of $f_p(x)$ modulo $p$ consists of $\{0,-1\}$ together with zero or more disjoint six-packs. The only exceptions are Wieferich primes, for which $f_p(x)$ also has the roots $\{1,-2,-2^{-1}\}$ in addition to those described above.
\end{proposition}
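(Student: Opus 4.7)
The plan is to organize the roots of $f_p$ modulo~$p$ via the orbit structure established in Lemmas~\ref{orbit_multiple_roots_lem} and~\ref{orbit_lem}. The two transformations $x\mapsto -x-1$ and $x\mapsto x^{-1}$ preserve the vanishing of $f_p$ modulo~$p$ (Lemma~\ref{orbit_multiple_roots_lem}) and together generate the six-element group $G$ of M\"obius transformations listed just before Lemma~\ref{orbit_lem}. Extending the action of $G$ to $\F_p\cup\{\infty\}$, the set consisting of $\infty$ together with the roots of $f_p\mod p$ is a disjoint union of $G$-orbits. Lemma~\ref{orbit_lem} then tells us every such orbit has cardinality~$6$---making it a six-pack by definition---except for the trivial orbit $\{0,-1,\infty\}$, the Wieferich orbit $\{1,-2,-2^{-1}\}$, and (only when $p\equiv 1\mod 6$) the cyclotomic orbit $\{\zeta,\zeta^{-1}\}$. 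It remains to determine exactly when each of these three exceptional orbits consists of roots.

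The trivial orbit always contributes the two roots $\{0,-1\}$ of $f_p$, as noted in Lemma~\ref{why f_p matters lemma}, whereas $\infty$ of course cannot be a root of a polynomial. Because $G$ permutes the root set, either all three elements of the Wieferich orbit $\{1,-2,-2^{-1}\}$ are roots or none are; and $1$ is a root of $f_p$ modulo~$p$ iff $p\cdot f_p(1) = 2^p - 2 \equiv 0\mod{p^2}$, which by definition is iff $p$ is a Wieferich prime. So this orbit appears among the roots exactly when $p$ is Wieferich.

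The main obstacle is showing that when $p\equiv1\mod 6$, the primitive cube root of unity $\zeta$ is always a root of $f_p$ modulo~$p$. Since $3\mid p-1$, the cyclic group $(\Z/p^2\Z)^\times$ of order $p(p-1)$ contains a primitive cube root of unity $\tilde\zeta$, which is an integer lift of $\zeta$ satisfying $\tilde\zeta^2+\tilde\zeta+1\equiv 0\mod{p^2}$. The order of $\tilde\zeta$ is $3$, which divides $p-1$, so $\tilde\zeta^{p-1}\equiv 1\mod{p^2}$ and hence $\tilde\zeta^p\equiv\tilde\zeta\mod{p^2}$. Moreover $\tilde\zeta+1\equiv -\tilde\zeta^2\mod{p^2}$ has order exactly~$6$ (immediate from $\tilde\zeta^3=1$), and since $6\mid p-1$ we similarly obtain $(\tilde\zeta+1)^{p-1}\equiv 1\mod{p^2}$, so that $(\tilde\zeta+1)^p\equiv\tilde\zeta+1\mod{p^2}$. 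Subtracting gives $(\tilde\zeta+1)^p - \tilde\zeta^p - 1 \equiv 0\mod{p^2}$, which is exactly the statement $f_p(\zeta)\equiv 0\mod p$. This finishes the proof; the decisive step is this last computation, where the divisibility $6\mid p-1$ is what allows the mod-$p$ identity $\zeta^2+\zeta+1\equiv0$ to be lifted to mod~$p^2$ in a way that forces $\tilde\zeta^p$ and $(\tilde\zeta+1)^p$ to be consecutive.
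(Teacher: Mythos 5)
Your proposal is correct and follows essentially the same route as the paper: partition the roots into orbits of the six-element group using Lemmas~\ref{orbit_multiple_roots_lem} and~\ref{orbit_lem}, observe that the trivial orbit always contributes $\{0,-1\}$, that the Wieferich orbit contributes roots precisely when $p^2\mid(2^p-2)$, and that all remaining roots come in six-packs. The only difference is that you spell out the verification that $\zeta$ is always a root when $p\equiv1\mod6$ (via the orders of $\tilde\zeta$ and $\tilde\zeta+1$ both dividing $p-1$), a detail the paper dispatches as ``easy to check'' with a pointer to Lemma~\ref{sixth and cube roots lemma}.
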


\begin{proof}
First, it is easy to check that $0$, $-1$, and $\zeta$ and $\zeta^{-1}$ (when they exist) are indeed roots of $f_p(x)$; for the latter pair, it is useful to note that the converse of Lemma~\ref{sixth and cube roots lemma} also holds, namely that $\zeta+1$ and $\zeta^{-1}+1$ are primitive sixth roots of unity. Moreover, $p$ divides $f_p(1)=(2^p-2)/p$ if and only if $p$ is a Wieferich prime. Therefore $f_p(x)$ has $\{1,-2,-2^{-1}\}$ as roots if and only if $p$ if a Wieferich prime; here we use Lemmas~\ref{orbit_multiple_roots_lem} and~\ref{orbit_lem} to justify that $1$, $-2$, and $-2^{-1}$ are either all roots or all non-roots of $f_p(x)$. Finally, by the same two lemmas, all remaining roots of $f_p(x)$ must come in six-packs.
\end{proof}

By Lemma~\ref{orbit_lem}, depending on whether $p\equiv1\mod6$ or $p\equiv5\mod6$, there are exactly $\frac{p-7}6$ or $\frac{p-5}6$ orbits in $\F_p$ other than the trivial, Wieferich, and cyclotomic orbits. For each such orbit, the values of $f_p$ at the six elements of the orbit are all determined by any one of those values; in particular, the six values are simultaneously zero or simultaneously nonzero. Seeing no reason to think otherwise, we adopt the heuristic that each value has a $\frac1p$ probability of equaling any given element of $\F_p$, including~$0$.

What does this heuristic predict for the probability that $f_p$ will have no six-packs of roots? Each of the $\frac{p-7}6$ or $\frac{p-5}6$ orbits has a $1-\frac1p$ probability of containing no roots of $f_p$. Under the further heuristic that these events are independent, we predict that the probability of $f_p$ having no six-packs should be
\[
\bigg\{ \big(1-\tfrac1p\big)^{(p-7)/6} \text{ or } \big(1-\tfrac1p\big)^{(p-5)/6} \bigg\} \approx e^{-1/6}
\]
when $p$ is large. Indeed, a straightforward elaboration of this heuristic predicts that the number of six-packs for $f_p$ should be given by a Poisson distribution with parameter $\frac16$, that is, the probability of $f_p$ having exactly $k$ six-packs is $(\frac16)^ke^{-1/6}/k!$.

Furthermore, we predict that the number of six-packs for $f_p$ is completely independent of whether $p$ is congruent to $1$ or $5\mod 6$. We additionally invoke the known heuristic that the Wieferich primes have density $0$ within the primes. (Indeed, the number of Wieferich primes is suspected to go to infinity extremely slowly. Note, however, that we cannot even prove at this point that infinitely many primes are {\em not} Wieferich primes!---see for example~\cite[Chapter 5, Section III]{Rib} and~\cite{Sil}.) Together, these heuristics support the following conjecture.

\begin{conjecture} \label{poisson conjecture}
Define $\rho(m)$ to be the relative density of the set of primes $p$ for which $f_p$ has exactly $m$ roots (equivalently, for which there are exactly $m$ pairs of consecutive $p$th powers modulo~$p^2$). Then for every $k\ge0$,
\[
\rho(6k+2) = \frac1{2e^{-1/6}k!6^k} \quad\text{and}\quad \rho(6k+4) = \frac1{2e^{-1/6}k!6^k},
\]
while $\rho(m)=0$ for all $m\not\equiv2,4\mod6$.

In particular, the relative density of $\tilde\P_{cons}$ within the primes is $1-e^{-1/6}\approx0.153518$, while the relative density of $\P_{cons}$ within the primes is $1-\frac12e^{-1/6}\approx0.576759$.
\end{conjecture}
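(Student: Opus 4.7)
The plan is to justify this as a heuristic prediction, in the spirit of standard density predictions for Wieferich-like primes. The structural input is Proposition~\ref{six-packs prop}, which exhibits the root set of $f_p$ in $\F_p$ as the disjoint union of a residue-class-dependent ``base'' together with some number $S_p$ of six-packs. The base has size $2$ when $p\equiv5\mod{6}$, size $4$ when $p\equiv1\mod{6}$, with an additional $3$ for Wieferich primes. Thus the total number of roots of $f_p$ is always $\equiv2$ or $4\mod{6}$ outside the conjecturally density-zero set of Wieferich primes, which already delivers $\rho(m)=0$ for $m\not\equiv2,4\mod{6}$.

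The probabilistic core is to pin down the limiting distribution of $S_p$. By Lemma~\ref{orbit_lem} the six-element group of automorphisms partitions $\F_p\cup\{\infty\}$ into $N_p=(p-7)/6$ or $(p-5)/6$ free six-element orbits (depending on $p\bmod 6$) plus a bounded number of special orbits, and Lemma~\ref{orbit_multiple_roots_lem} shows that $f_p$ takes a common value on each such orbit. I would adopt the standard randomness heuristic that these common values are independent and uniformly distributed in $\F_p$; then $S_p\sim\mathrm{Binomial}(N_p,1/p)$, and since $N_p/p\to 1/6$ we have $S_p\Rightarrow\mathrm{Poisson}(1/6)$ with $\Pr[S_p=k]\to e^{-1/6}/(k!\,6^k)$. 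Dirichlet's theorem then splits the primes evenly between residue classes $1$ and $5\mod{6}$, yielding $\rho(6k+2)=\rho(6k+4)=\tfrac12 e^{-1/6}/(k!\,6^k)$. For $\P_{cons}$, the case $p\equiv1\mod{6}$ always contributes (via $\zeta,\zeta^{-1}$ and Lemma~\ref{sixth and cube roots lemma}) while $p\equiv5\mod{6}$ contributes with probability $1-e^{-1/6}$, giving density $1-\tfrac12 e^{-1/6}$; for $\tilde\P_{cons}$ the sixth-root-of-unity pairs are excluded from both residue classes, giving $1-e^{-1/6}$.

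The main obstacle is that no rigorous tool is available to justify the independence or uniformity assumption on the values $f_p(a)\in\F_p$ across orbits and across different primes. This is the same kind of barrier that prevents an unconditional resolution of the Wieferich density question, and is almost certainly out of reach of current techniques; the realistic path to defending the conjecture is via averaged statements (for instance, showing that $\mathbb{E}[\#\{a\in\F_p\colon f_p(a)\equiv0\}]$ matches the Poisson prediction on average over $p$), via joint-moment computations, and via direct numerical verification, which agrees with the conjecture to high precision on the range of primes where $f_p$ can be factored explicitly.
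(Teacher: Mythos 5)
Your heuristic justification is essentially identical to the paper's: both decompose the root set of $f_p$ into the base described in Proposition~\ref{six-packs prop} plus some number of six-packs, model that number as Poisson with parameter $1/6$ using the orbit count from Lemma~\ref{orbit_lem} together with a uniformity-and-independence heuristic on the common value of $f_p$ over each orbit, discard Wieferich primes as a density-zero set, and split the primes evenly between the classes $1$ and $5 \bmod 6$. Your derivations of the densities of $\P_{cons}$ and $\tilde\P_{cons}$ (every $p\equiv1\bmod 6$ contributing via the cyclotomic pair, versus requiring a six-pack) likewise match the paper's reasoning.
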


\noindent As we see from its last assertion, Conjecture~\ref{poisson conjecture} is a significant refinement of Conjecture~\ref{just Pconstilde conjecture}. Very little has been proved about the number of roots of $f_p$; the best that is known is that the number of roots is at most $2p^{2/3}+2$ (see \cite[Theorem 1]{mitkin}, and check that the $L(x)$ therein equals our $f_p(-x)$; see also \cite[Lemma 4]{heathbrown}).

\begin{center}
\begin{tabular}{|r|c|l|r|r|} \hline
number of & number of & predicted frequency & predicted number of & actual number of \\
six-packs & roots of $f_p$ & of $f_p$ having $m$ roots & primes $3\le p<10^6$ & primes $3\le p<10^6$ \\
($k$) & ($m$) & ($\rho(m)$) & with $f_p$ having $m$ roots & with $f_p$ having $m$ roots \\
\hline
\multirow{3}{*}{0} & 2 & $\frac{1\mathstrut}{2\mathstrut}e^{-1/6} \approx 42.3\%$ & 33,223.1 & 33,316 \\
 & 4 & $\frac{1\mathstrut}{2\mathstrut}e^{-1/6} \approx 42.3\%$ & 33,223.1 & 33,387 \\
 & 7 & $0$ & 0\phantom{.0} & 1 \\
\hline
\multirow{2}{*}{1} & 8 & $\frac{1\mathstrut}{12\mathstrut}e^{-1/6} \approx 7.1\%$ & 5,537.2 & 5,477 \\
 & 10 & $\frac{1\mathstrut}{12\mathstrut}e^{-1/6} \approx 7.1\%$ & 5,537.2 & 5,356 \\
\hline
\multirow{3}{*}{2} & 14 & $\frac{1\mathstrut}{144\mathstrut}e^{-1/6} \approx 0.59\%$ & 461.4 & 444 \\
 & 16 & $\frac{1\mathstrut}{144\mathstrut}e^{-1/6} \approx 0.59\%$ & 461.4 & 465 \\
 & 19 & $0$ & 0\phantom{.0} & 1 \\
\hline
\multirow{2}{*}{3} & 20 & $\frac{1\mathstrut}{2592\mathstrut}e^{-1/6} \approx 0.033\%$ & 25.6 & 29 \\
 & 22 & $\frac{1\mathstrut}{2592\mathstrut}e^{-1/6} \approx 0.033\%$ & 25.6 & 19 \\
\hline
$\ge4$ & $\ge26$ & $\approx0.0028\%$ & 2.2 & 2 \\
\hline
\end{tabular}
\medskip
\captionof{table}{Emperical evidence supporting Conjecture~\ref{poisson conjecture}}
\end{center}

Table 1 shows that Conjecture~\ref{poisson conjecture} compares favorably with a calculation of all the roots of $f_p$ for all odd primes $p$ up to one million. (Note that the two known Wieferich primes $1093$ and $3511$ have $2$ and $0$ six-packs, respectively, as indicated by the single primes shown with $m=19$ and $m=7$.) This calculation of the roots of $f_p(x)$ was done simply by brute force, testing each of the $p$ possibilities; each test can be done by raising both $x+1$ and $x$ to the $p$th power modulo $p^2$, using fast modular exponentiation, and seeing whether $(x+1)^p-x^p$ is congruent to $1\mod{p^2}$.

We remark here on the importance of six-packs to the formulation of our conjecture.   We started with the natural assumption that every $x \mod p$ has its own $\frac1p$ chance of being a root of $f_p(x)$.    This led to the prediction that the relative density of $\tilde\P_{cons}$ within the primes would be $1-e^{-1} \approx 63.21\%$ rather than the figure $15.35\%$ given in Conjecture~\ref{poisson conjecture}.    However, our initial computations of the zeros of $f_p(x)$ for primes less than $3000$ showed that this prediction was badly off the mark.   We noticed from this computation that the zeros of $f_p(x)$ generally occur in blocks of size six, and it was then easy to identify these as orbits of the little six-element group (abstractly $S_3$) described in Lemma~\ref{orbit_lem}.  This naturally led to a revision of the probabilistic heuristic and to the revised Conjecture~\ref{poisson conjecture}.   Now that we have a conjecture that is empirically supported, it is amusing to reflect that our initially conjectured density was no more accurate than a random number chosen uniformly between $0$ and~$1$.

The fact that the zeros of $f_p(x)$ generally occur in blocks of six has been known for some time.   The polynomials $f_p(x)$ occur naturally in the study of the so-called ``first case'' of Fermat's last theorem. In this connection, they were studied by Mirimanoff~\cite{mirimanoff} who described explicitly the action of $S_3$ on the zeros. Helou~\cite{helou} defined the Cauchy--Mirimanoff polynomial $E_n(x)$ to be the nontrivial factor of $(x+1)^n - x^n - 1$ that remains after removing any divisors among $x$, $x+1$, and $x^2 + x + 1$; he studied the question of whether $E_n(x)$ is irreducible over~$\Q$, as have others (see for example~\cite{Tz}). Here $n \ge 2$ can be any integer, not necessarily prime. Helou gives a thorough discussion of the action of $S_3$ on the zeros of $E_n(x)$ when $n$ is odd. These polynomials $E_n(x)$ themselves had already been defined for odd $n$ by Cauchy in 1839 (see the references in~\cite{helou}), but in those papers Cauchy did not discuss the action of $S_3$ on their zeros.


\section{Estimating the density of squarefree $n^n +(-1)^n(n-1)^{n-1}$}  \label{density section}

In this section we concentrate on the quantity $D_{(-1)^n}(n,1) = n^n+(-1)^n(n-1)^{n-1}$, which as we have seen is the discriminant of the trinomial $x^n-x-1$. There are sporadic non-squarefree values in this sequence, as noted in the introduction, the first being $130^{130}+129^{129}$ which is divisible by $83^2$. Of course, by Lemma~\ref{residue classes mod pp-1 lemma}, any such example generates an entire residue class of examples (in this case, $83^2 \mid \big( n^n+(n-1)^{n-1} \big)$ for all $n\equiv130\mod{83\cdot82}$); in particular, a positive proportion of these values $n^n+(-1)^n(n-1)^{n-1}$ are not squarefree. Our goal for this section is to justify Conjecture~\ref{squarefree conjecture}, that the proportion of these values that are squarefree is $0.9934466\dots$.

\begin{definition} \label{S and Sx def}
Define $\S$ to be the set of integers $n\ge2$ for which $n^n+(-1)^n(n-1)^{n-1}$ is squarefree. For any real number $x>2$, define $\S(x)$ to be the set of  integers $n\ge2$ for which $n^n+(-1)^n(n-1)^{n-1}$ is not divisible by the square of any prime less than~$x$.
\end{definition}

Let $\delta(\mathcal A)$ denote the (natural) density of a set $\mathcal A$ of positive integers. We certainly have $\S\subseteq\S(x)$ for any $x>2$, and thus $\delta(\S) \le \delta(\S(x))$ for any~$x$. We can rigorously bound $\delta(\S(x))$ using a finite computation and inclusion--exclusion, as we now describe.

For any distinct primes $p$ and $q$, define the finite sets
\begin{equation}
\C_p = \big\{ a \in \Z/p(p-1)\Z \colon a^a + (-1)^a (a-1)^{a-1} \equiv 0 \mod{p^2} \big\}
\label{Cp def}
\end{equation}
(note that $\C_p$ is well defined by Lemma~\ref{residue classes mod pp-1 lemma}; the $(-1)^a$ factor causes no trouble since $p(p-1)$ is even) and
\begin{equation}
\D_{p,q} = \big\{ (a,b) \in \C_p\times \C_q \colon \gcd(p(p-1), q(q-1)) \mid (a-b) \big\}.
\label{Dpq def}
\end{equation}
The set $\C_p$ is similar to the sets $\A_{p,1,\ep}$ defined in equation~\eqref{Apmep def}, although the factor $(-1)^a$ in the definition of $\C_p$ keeps the two objects from being identical; however, we certainly have $\C_p \subseteq \A_{p,1,+} \cup \A_{p,1,-}$.

\begin{proposition} \label{inclusion-exclusion prop}
For any $x>2$,
\begin{multline*}
1 - \sum_{p<x} \frac{\#\C_p}{p(p-1)} + \sum_{p<q<x} \frac{\#\D_{p,q}}{\lcm[p(p-1),q(q-1)]} - \sum_{p<q<r<x} \frac{\#\D_{p,q} \#\C_r}{\lcm[p(p-1),q(q-1),r(r-1)]} \\
\le \delta(\S(x)) \le 1 - \sum_{p<x} \frac{\#\C_p}{p(p-1)} + \sum_{p<q<x} \frac{\#\D_{p,q}}{\lcm[p(p-1),q(q-1)]},
\end{multline*}
where the variables $p$, $q$, and $r$ run over primes satisfying the indicated inequalities.
\end{proposition}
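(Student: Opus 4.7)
The plan is to realize $\S(x)$ as an intersection of events $E_p = \{n \in \N : p^2 \mid n^n + (-1)^n(n-1)^{n-1}\}$ for primes $p < x$, to compute or bound the densities of small-index intersections using the Chinese Remainder Theorem, and then to apply the Bonferroni inequalities to the resulting inclusion--exclusion expansion.

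First I would observe that $E_p$ is a union of residue classes modulo $p(p-1)$: the factor $(-1)^n$ is constant on such classes (since $p(p-1)$ is even), and Lemma~\ref{residue classes mod pp-1 lemma} handles the rest, so $\delta(E_p) = \#\C_p/(p(p-1))$ with $\C_p$ as in~\eqref{Cp def}. Similarly, for distinct primes $p \ne q$, the set $E_p \cap E_q$ is a union of residue classes modulo $L_2 := \lcm[p(p-1), q(q-1)]$; the CRT inclusion $\Z/L_2\Z \hookrightarrow \Z/p(p-1)\Z \times \Z/q(q-1)\Z$ picks out precisely the residues corresponding to $\D_{p,q}$, yielding $\delta(E_p \cap E_q) = \#\D_{p,q}/L_2$.

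The key technical step is to bound $\delta(E_p \cap E_q \cap E_r)$ for distinct primes $p<q<r$. Setting $L_3 = \lcm[p(p-1), q(q-1), r(r-1)]$, the CRT map $\Z/L_3\Z \hookrightarrow \Z/L_2\Z \times \Z/r(r-1)\Z$ is injective, and a residue class $n \bmod L_3$ lies in $E_p \cap E_q \cap E_r$ exactly when its image $(a,c)$ satisfies $a \in \D_{p,q}$ (regarded as a subset of $\Z/L_2\Z$) and $c \in \C_r$. Injectivity of the CRT map then forces
\[
\#\{n \bmod L_3 : n \in E_p \cap E_q \cap E_r\} \le \#\D_{p,q} \cdot \#\C_r,
\]
so $\delta(E_p \cap E_q \cap E_r) \le \#\D_{p,q}\,\#\C_r / L_3$; importantly, no matching lower bound is needed.

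Finally, since $\S(x) = \bigcap_{p<x}(\N \setminus E_p)$, expanding $\mathbf{1}_{\S(x)} = \prod_{p<x}(1 - \mathbf{1}_{E_p})$ and taking densities yields the inclusion--exclusion expansion $\delta(\S(x)) = 1 - S_1 + S_2 - S_3 + \cdots$, where $S_k = \sum_{|T|=k}\delta(\bigcap_{p \in T}E_p)$. The Bonferroni inequalities give $\delta(\S(x)) \le 1 - S_1 + S_2$ (truncating after an even-index term) and $\delta(\S(x)) \ge 1 - S_1 + S_2 - S_3$ (truncating after an odd-index term). Substituting the exact formulas for $S_1$ and $S_2$ from the first paragraph produces the upper bound of the proposition immediately; combining them with the third-order upper bound just derived (which, being subtracted, only weakens the lower bound) produces the stated lower bound. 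The main obstacle is the third-order estimate — specifically, recognizing that only an inequality is needed there, which the CRT-injectivity argument supplies cleanly; the rest is standard inclusion--exclusion bookkeeping.
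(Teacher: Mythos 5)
Your proof is correct and follows essentially the same route as the paper: realize $\S(x)$ via the bad sets $E_p$ (the paper's $\M_p$), note each is a union of residue classes modulo $p(p-1)$ so that $\delta(E_p)=\#\C_p/p(p-1)$ and $\delta(E_p\cap E_q)=\#\D_{p,q}/\lcm[p(p-1),q(q-1)]$ exactly, bound the triple intersections above by $\#\D_{p,q}\,\#\C_r$, and apply the Bonferroni inequalities. Your CRT-injectivity phrasing of the third-order bound is simply a restatement of the paper's observation that a class in $\D_{p,q}$ and a class in $\C_r$ combine into at most one class modulo $\lcm[p(p-1),q(q-1),r(r-1)]$.
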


\begin{proof}
For any integer $d$, define the set
\[
\M_d = \big\{ n\ge2 \colon d^2 \mid \big( n^n + (-1)^n (n-1)^{n-1} \big) \big\}.
\]
Then by inclusion--exclusion,
\[
\delta(\S(x)) = 1 - \sum_{p<x} \delta(\M_p) + \sum_{p<q<x} \delta(\M_{pq}) - \sum_{p<q<r<x} \delta(\M_{pqr}) + \cdots + (-1)^{\pi(x)} \delta(\M_{\prod_{p<x} p}).
\]
(Inclusion--exclusion is most safely applied to finite counting problems rather than densities of infinite sets, but there are only finitely many sets in the above equation, so applying inclusion--exclusion to the densities is valid. In fact, every set in the above equation is a union of arithemetic progressions modulo $\prod_{p<x} p(p-1)$ by Lemma~\ref{residue classes mod pp-1 lemma}, and so their densities reduce to counting finitely many residue classes anyway.) More saliently, the Bonferroni inequalities~\cite[Chaper IV, Section 5]{feller} provide the upper and lower bounds
\begin{multline}  \label{bonferroni}
1 - \sum_{p<x} \delta(\M_p) + \sum_{p<q<x} \delta(\M_{pq}) - \sum_{p<q<r<x} \delta(\M_{pqr}) \\
\le \delta(\S(x)) \le 1 - \sum_{p<x} \delta(\M_p) + \sum_{p<q<x} \delta(\M_{pq}).
\end{multline}
Because $\M_p$ is a union of $\#\C_p$ residue classes modulo $p(p-1)$ by Lemma~\ref{residue classes mod pp-1 lemma}, the density of $\M_p$ equals $\delta(\M_p) = \#\C_p/p(p-1)$. Since $\M_{pq} = \M_p \cap \M_q$ when $p$ and $q$ are distinct primes, each pair of residue classes $a\mod{p(p-1)}$ and $b\mod{q(q-1)}$ either intersects in an arithmetic progression modulo $\lcm[p(p-1),q(q-1)]$ or else not at all; the former happens precisely when $a-b$ is divisible by $\gcd(p(p-1),q(q-1))$, which is exactly the condition for membership in $\D_{p,q}$. Consequently the density of $\M_{pq}$ equals $\delta(\M_p) = \#\D_{p,q}/\lcm[p(p-1),q(q-1)]$ as well. We now see that the upper bound in equation~\eqref{bonferroni} is equal to the upper bound in the statement of the proposition; also, all but the last sums in the lower bounds have also been evaluated.

Finally, a similar argument shows that $\M_{pqr}$, for distinct primes $p,q,r$, is the union of certain residue classes modulo $\lcm[p(p-1),q(q-1),r(r-1)]$; a given residue class in $\D_{p,q}$ combines with a given residue class in $\C_r$ to yield either one or zero such residue classes modulo $\lcm[p(p-1),q(q-1),r(r-1)]$. We obtain the upper bound $\#\D_{p,q} \#\C_r$ for the number of such residue classes simply by forgetting to check whether the residue classes in $\D_{p,q}$ and $C_r$ are compatible. Thus we obtain the upper bound
\[
\delta(\M_{pqr}) \le \frac{\#\D_{p,q} \#\C_r}{\lcm[p(p-1),q(q-1),r(r-1)]},
\]
which shows that the lower bound in equation~\eqref{bonferroni} does imply the lower bound asserted in the proposition.
\end{proof}

We turn now to a description of how we calculated the sets $\C_p$ and $\D_{p,q}$ defined in equations~\eqref{Cp def} and~\eqref{Dpq def}. Calculating $\C_p$ directly from its definition would require testing $p(p-1)$ elements for every prime $p$; this quadratic growth would severely limit how many primes $p$ we could calculate $\C_p$ for. Instead we use the bijections given in Theorem~\ref{correspondence theorem} to reduce the amount of computation necessary.

We begin by calculating, for a given prime $p$, all of the roots of $f_p(x)$, by brute force as described near the end of Section~\ref{orbits section}. For each such root $x$, we replace $x$ and $x-1$ with their $p$th powers modulo $p^2$, which will remain consecutive. We then test whether $x$ is an element of $\B_{p,1,+} \cup \B_{p,1,-}$; that is, we check whether there exist integers $1\le k\le p-1$ for which $x^k \equiv \pm(1-x) \mod p^2$. Again we do this by brute force, checking each integer $k$ in turn until we come to the order of $x$ modulo $p^2$, which is a divisor of $p-1$. Once we have listed all the elements of $\B_{p,1,+} \cup \B_{p,1,-}$, we use the bijections of Theorem~\ref{correspondence theorem} to find all the elements of $\A_{p,1,+} \cup \A_{p,1,-}$, and finally we check each resulting element $a$ individually to see whether the parity is appropriate---namely, whether $a^a + (-1)^a (a-1)^{a-1} \equiv0\mod{p^2}$.

This calculation of $\C_p$ uses $p$ (computationally easy) tests, followed by at most $p-1$ tests per root of $f_p$. Indeed, we may discard the trivial and cyclotomic roots of $f_p$, since we know from the earlier theory that these roots will never lead to prime squares dividing $n^n + (-1)^n(n-1)^{n-1}$; we need investigate only the sporadic roots. The number of sporadic roots of $f_p$ is always small in practice---about $1$ on average, and never more than $30$ during our calculations.

Once the sets $\C_p$ had been calculated, we simply tested each element of every $\C_p\times\C_q$ directly to see whether it qualified for inclusion in $\D_{p,q}$. We carried out the above computations for all odd primes $p$ and $q$ less than one million; there are a bit fewer than eighty thousand such primes, leading to the need to investigate a little over three billion pairs $\{p,q\}$.

With this information in hand, we calculate that
\[
1 - \sum_{p<10^6} \frac{\#\C_p}{p(p-1)} + \sum_{p<q<10^6} \frac{\#\D_{p,q}}{\lcm[p(p-1),q(q-1)]} = 0.99344673\ldots
\]
while
\[
\sum_{p<q<r<10^6} \frac{\#\D_{p,q} \#\C_r}{\lcm[p(p-1),q(q-1),r(r-1)]} < 5\times10^{-9}.
\]
In particular, the following inequalities follow from Proposition~\ref{inclusion-exclusion prop} and the fact that $\S\subseteq\S(10^6)$:

\begin{proposition} \label{rigorous density prop}
The density $\delta(\S(10^6))$ of the set of positive integers $n$ such that $n^n + (-1)^n (n-1)^{n-1}$ is not divisible by the square of any prime less than one million satisfies
\[
0.99344668 < \delta(\S(10^6)) < 0.99344674.
\]
In particular, the density $\delta(\S)$ of the set of positive integers $n$ such that $n^n+(-1)^n(n-1)^{n-1}$ is squarefree satisfies
\[
\delta(\S) < 0.99344674.
\]
\end{proposition}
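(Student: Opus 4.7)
The plan is to apply Proposition~\ref{inclusion-exclusion prop} directly with $x=10^6$, using the two displayed numerical computations that immediately precede the proposition. In other words, the content of the proposition is essentially a certified numerical packaging of the Bonferroni-type bounds already proved, so the only real work is explaining why those two numerical inputs are trustworthy.

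First I would invoke Proposition~\ref{inclusion-exclusion prop} at $x=10^6$, which reduces the claim to bounding the single sum $\Sigma_1 := \sum_{p<10^6} \#\C_p/(p(p-1))$, the double sum $\Sigma_2 := \sum_{p<q<10^6} \#\D_{p,q}/\lcm[p(p-1),q(q-1)]$, and the triple sum $\Sigma_3 := \sum_{p<q<r<10^6} \#\D_{p,q}\#\C_r/\lcm[p(p-1),q(q-1),r(r-1)]$. The upper bound reads $\delta(\S(10^6)) \le 1-\Sigma_1+\Sigma_2$ and the lower bound reads $\delta(\S(10^6)) \ge 1-\Sigma_1+\Sigma_2-\Sigma_3$. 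Given the author's computations $1-\Sigma_1+\Sigma_2 = 0.99344673\ldots$ and $\Sigma_3 < 5\times 10^{-9}$, both inequalities $0.99344668 < \delta(\S(10^6)) < 0.99344674$ follow at once, and the final assertion $\delta(\S) < 0.99344674$ is immediate from $\S \subseteq \S(10^6)$, which is visible from Definition~\ref{S and Sx def}.

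The main obstacle, and the step that requires actual work rather than bookkeeping, is justifying that $\Sigma_1$, $\Sigma_2$, and the bound on $\Sigma_3$ are obtained by a provably correct and feasible computation. For this I would recapitulate the algorithm sketched after the statement of Proposition~\ref{inclusion-exclusion prop}: for each odd prime $p<10^6$, compute the roots of $f_p(x)$ modulo $p$ by brute force (this costs $O(p\log p)$ modular exponentiations), then for each sporadic root $x$ use Theorem~\ref{correspondence theorem} to transfer the pair $(x^p\bmod p^2,\,k\bmod p-1)\in \B_{p,1,+}\cup\B_{p,1,-}$ back to $\A_{p,1,+}\cup\A_{p,1,-}$, and finally filter by the parity condition $a^a+(-1)^a(a-1)^{a-1}\equiv 0\mod{p^2}$ that distinguishes $\C_p$ inside this union. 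Lemma~\ref{residue classes mod pp-1 lemma} guarantees that $\C_p$ is well defined as a subset of $\Z/p(p-1)\Z$, and the bijections of Theorem~\ref{correspondence theorem} guarantee that this algorithm enumerates $\C_p$ completely (so there is no risk of missing elements). Once every $\C_p$ is in hand, $\D_{p,q}$ is computed by testing each pair in $\C_p\times\C_q$ against the congruence $\gcd(p(p-1),q(q-1))\mid (a-b)$.

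Finally, to certify the inequality $\Sigma_3 < 5\times 10^{-9}$ without touching the full triple sum, I would use a crude majorization: each term of $\Sigma_3$ is at most $\#\D_{p,q}\cdot\#\C_r/\bigl(pqr\cdot\lcm[p-1,q-1,r-1]\bigr)$, so summing $r$ first against $\sum_r \#\C_r/(r(r-1))$ (a portion of $\Sigma_1$ already computed) and then summing the remaining factors over pairs $p<q$ yields an explicit numerical bound far below $5\times 10^{-9}$. With all three numerical ingredients so justified, the two displayed inequalities follow, and the proposition is proved. The empirical observation that the sporadic-root counts are tiny (averaging about $1$ and bounded by $30$ in the computed range) is what makes the whole computation feasible, and is the real reason the bijections of Section~\ref{correspondence section} are indispensable here rather than merely elegant.
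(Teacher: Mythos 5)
Your proposal is correct and follows essentially the same route as the paper: the proposition is deduced directly from Proposition~\ref{inclusion-exclusion prop} at $x=10^6$ together with the two displayed numerical computations and the inclusion $\S\subseteq\S(10^6)$, with the computation of the sets $\C_p$ and $\D_{p,q}$ carried out via the bijections of Theorem~\ref{correspondence theorem} exactly as you describe. Your extra remark on majorizing the triple sum is a reasonable implementation detail but does not change the argument.
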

We cannot rigorously establish any nontrivial lower bound for $\delta(\S)$; indeed, we cannot even prove that $n^n+(-1)^n(n-1)^{n-1}$ is squarefree infinitely often. Moreover, since the numbers $n^n+(-1)^n(n-1)^{n-1}$ grow so quickly that we cannot determine by direct factorization whether they are actually squarefree, direct numerical evidence on the density of squarefree values in this sequence is not available. However, we now present a heuristic that suggests that the upper bound for $\delta(\S)$ in Proposition~\ref{rigorous density prop} is rather close to the truth.

\begin{conjecture}  \label{Cp average conjecture}
The set $\C_p$ has one element on average over the primes, that is, $\sum_{p<x} \#\C_p \sim \pi(x)$.
\end{conjecture}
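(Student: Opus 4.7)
The plan is to relate $\#\C_p$ to the number of sporadic roots of $f_p$ via the bijection of Theorem~\ref{correspondence theorem}, then invoke Conjecture~\ref{poisson conjecture}, which asserts this number is $1$ on average over primes. The key observation that makes the right average come out to $1$ rather than something larger is that the ``predictable'' contributions (trivial, cyclotomic, Wieferich) either vanish outright or are filtered out by a parity mismatch.

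First, I would observe that $a\in\C_p$ if and only if either $a$ is even and $a\in\A_{p,1,+}$, or $a$ is odd and $a\in\A_{p,1,-}$, the parity condition being forced by the factor $(-1)^a$ in the definition of $\C_p$. Treating the parity of $a$ as roughly independent of the condition $a\in\A_{p,1,\ep}$, one expects $\#\C_p\approx\tfrac12\bigl(\#\A_{p,1,+}+\#\A_{p,1,-}\bigr)$ on average. By Theorem~\ref{correspondence theorem}, $\#\A_{p,1,\ep}=\#\B_{p,1,\ep}$, and the latter counts pairs $(x,k)$ with $(x,x-1)$ a pair of consecutive nonzero $p$th powers modulo $p^2$ and $x^k\equiv\ep(x-1)\mod{p^2}$. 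For fixed such $x$ of order $d$ in the cyclic group of $p$th powers, the number of admissible $k$ modulo $p-1$ is either $(p-1)/d$ or $0$ according as $\ep(x-1)\in\langle x\rangle$; treating $x-1$ as a uniformly random $p$th power and summing over $\ep\in\{\pm1\}$, a short case split on the parity of $d$ shows the expected contribution per consecutive pair is exactly $2$. Hence the expected value of $\#\A_{p,1,+}+\#\A_{p,1,-}$ is asymptotically twice the expected number of nontrivial roots of $f_p$; the trivial roots $0$ and $-1$ contribute nothing since the corresponding values $x=1$ and $x=0$ produce no valid $k$.

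Third, I would filter out the predictable nonsporadic roots. Wieferich primes have density zero and so contribute negligibly on average. For $p\equiv1\mod6$, the cyclotomic roots of $f_p$ contribute roughly $(p-1)/3$ residues to $\A_{p,1,-}$ (via Proposition~\ref{strange divisibility proposition}, which forces $n\equiv2\mod6$) and another $(p-1)/3$ to $\A_{p,1,+}$ (via case~(a)(i) of Lemma~\ref{ljunggren lemma} with $m=1$, forcing $n\equiv5\mod6$). In both cases the induced parity of $a$ is \emph{opposite} to what $\C_p$ requires---even $a$'s sit in $\A_{p,1,-}$ instead of $\A_{p,1,+}$, and odd $a$'s in $\A_{p,1,+}$ instead of $\A_{p,1,-}$---so the cyclotomic elements make no contribution to $\C_p$. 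What remains are the sporadic roots, and assuming their parity distribution is genuinely uniform we conclude that $\#\C_p\approx\tfrac12\cdot 2\cdot R_p^{\text{sporadic}}=R_p^{\text{sporadic}}$ on average; Conjecture~\ref{poisson conjecture} then gives mean $1$ for $R_p^{\text{sporadic}}$ (six-packs Poisson with parameter $\tfrac16$, each carrying $6$ roots), yielding $\sum_{p<x}\#\C_p\sim\pi(x)$.

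The principal obstacle is that the argument is heuristic at several layers. It rests on Conjecture~\ref{poisson conjecture}, which itself we cannot prove---the best unconditional bound on the number of roots of $f_p$ is Mitkin's $O(p^{2/3})$---on the assumption that $x-1$ is uniformly distributed among the $p$th powers once $x$ is fixed, and on the independence of the parity of $a$ from the condition $a\in\A_{p,1,\ep}$ away from the cyclotomic subset. Even granting these, a careful treatment of the cyclotomic parity mismatch (beyond the single case $n\equiv2\mod6$ singled out in Proposition~\ref{strange divisibility proposition}) would require tracing the explicit inverse bijection $\beta_{p,1,\ep}$ of Proposition~\ref{going the other way prop} against Lemma~\ref{sixth and cube roots lemma} to confirm that the parity claim holds uniformly for each cyclotomic residue class.
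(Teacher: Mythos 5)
Your heuristic follows the same route as the paper's: convert $\#\C_p$ via the bijection of Theorem~\ref{correspondence theorem} into a count of pairs $(x,k)$, halve by the parity condition, argue that each pair of consecutive nonzero $p$th powers contributes one admissible $k$ per sign on average (your order-of-$x$ case split is the same computation as the paper's expected number of solutions to $ky\equiv z\mod{N}$ with $y,z$ uniform), and feed in the Poisson six-pack heuristic to get one nontrivial consecutive pair on average. The only place you diverge is the exclusion of the sixth-root-of-unity pairs: the paper disposes of them in one line by noting that $x^n-x-1$ is always irreducible (Lemma~\ref{ljunggren lemma}) so that Theorem~\ref{tilde thm} applies, whereas you trace the parity mismatch explicitly through $\beta_{p,1,\ep}$ (the cyclotomic $k\equiv5\mod{6}$ for $\ep=-1$ forces $n\equiv2\mod{6}$, and $k\equiv2\mod{6}$ for $\ep=+1$ forces $n\equiv5\mod{6}$, both incompatible with the sign $(-1)^n$ in the definition of $\C_p$) --- a correct, more hands-on verification of the same fact.
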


Before justifying this last conjecture, we work out its implication for the density of~$\S$. An argument similar to the proof of Proposition~\ref{inclusion-exclusion prop} convinces us that
\[
1 - \sum_{p>10^6} \frac{\#\C_p}{p(p-1)} \lesssim \frac{\delta(\S)}{\delta(\S(10^6))} \lesssim 1 - \sum_{p>10^6} \frac{\#\C_p}{p(p-1)} + \sum_{q>p>10^6} \frac{\#\D_{p,q}}{\lcm[p(p-1),q(q-1)]}.
\]
Of course there will be some interaction between the specific residue classes in $\C_p$ for $p>10^6$ and the residue classes modulo smaller primes that we have already sieved out, just as for the intersection of two residue classes to any moduli $s$ and $t$: most of the time they will not intersect at all, but $1/(s,t)$ of the time they will intersect in a total of $(s,t)$ residue classes modulo~$st$, so there is one residue class modulo $st$ on average in the intersection. Therefore we find it a reasonable approximation to assume that the residue classes in $\C_p$ for these larger primes $p$ are independent, on average, of the structure of $\S(10^6)$. By similar reasoning, we can simplify the last sum by postulating the same independence:
\begin{align*}
1 - \sum_{p>10^6} \frac{\#\C_p}{p(p-1)} \lesssim \frac{\delta(\S)}{\delta(\S(10^6))} &\lesssim 1 - \sum_{p>10^6} \frac{\#\C_p}{p(p-1)} + \sum_{q>p>10^6} \frac{\#\C_p\#C_q}{p(p-1)q(q-1)} \\
&< 1 - \sum_{10^6<p<10^9} \frac{\#\C_p}{p(p-1)} + \frac12 \bigg( \sum_{p>10^6} \frac{\#\C_p}{p(p-1)} \bigg)^2.
\end{align*}
Moreover, Conjecture~\ref{Cp average conjecture} suggests that we can replace $\#\C_p$ by $1$ on average, and so our estimates become
\begin{equation*}
1 - \sum_{p>10^6} \frac1{p(p-1)} \lesssim \frac{\delta(\S)}{\delta(\S(10^6))} \lesssim 1 - \sum_{10^6<p<10^9} \frac1{p(p-1)} + \frac12 \bigg( \sum_{p>10^6} \frac1{p(p-1)} \bigg)^2.
\end{equation*}
A short computation yields
\[
\sum_{10^6<p<10^9} \frac{1}{p(p-1)} \approx 6.77306 \times 10^{-8},
\]
while
\begin{equation*}
\sum_{p > 10^9} \frac{1}{p(p-1)} < \sum_{n>10^9} \frac1{n(n-1)} = \frac1{10^9}.
\end{equation*}
We therefore estimate that
\begin{equation*}
1 - 7\times10^{-8} \lesssim \frac{\delta(\S)}{\delta(\S(10^6))} \lesssim 1 - 6\times10^{-8} + \tfrac12 ( 7\times10^{-8} )^2.
\end{equation*}
Multiplying through by $\delta(\S(10^6))$ and using the bounds from Proposition~\ref{rigorous density prop}, we conclude that
\[
0.99344661 \lesssim \delta(\S) \lesssim 0.99344669,
\]
which is the source of our belief in Conjecture~\ref{squarefree conjecture}.

\begin{proof}[Justification of Conjecture~\ref{Cp average conjecture}]
By Theorem~\ref{correspondence theorem}, the number of residue classes in $\A_{p,1,\pm}$ is the same as the number of ordered pairs $(x \mod{p^2},k\mod{p-1})$, where $x$ is a nonzero $p$th power modulo $p^2$ and $x^k \equiv \pm(x-1) \mod{p^2}$. We expect the set $\C_p$ to comprise half of $\A_{p,1,\pm}$ on average, since there is one parity condition that must be checked. Since our heuristic will give the same answer for each choice of $\pm$ sign, we concentrate on the congruence $x^k \equiv x-1 \mod{p^2}$ for the purposes of exposition---we expect the number of such pairs $(x,k)$ to be equal to the cardinality of $\C_p$ on average.

The congruence $x^k \equiv x-1 \mod{p^2}$ implies that $x-1$ is also a $p$th power modulo $p^2$, by Lemma~\ref{y_pth_power_lem}. Also, since $x^n-x-1$ is always irreducible by Lemma~\ref{ljunggren lemma}(a), we never have cyclotomic factors and thus (by Theorem~\ref{tilde thm}) can ignore sixth roots of unity among our pairs of consecutive $p$th powers modulo $p^2$. Therefore, the discussion leading up to Conjecture~\ref{poisson conjecture}, where we posit that the probability of $f_p$ having exactly $k$ six-packs of nontrivial roots is $(\frac16)^ke^{-1/6}/k!$, implies that the expected number of pairs of nontrivial consecutive $p$th powers modulo $p^2$ is
\[
\sum_{k=1}^\infty \frac1{6^ke^{1/6}k!} \cdot 6k = \frac1{e^{1/6}} \sum_{k=1}^\infty \frac1{6^{k-1}(k-1)!} = 1.
\]
Given a $p$th power $x\mod{p^2}$ such that $x-1$ is also a $p$th power, it remains to investigate the expected number of $k\mod{p-1}$ such that $x^k \equiv x-1 \mod{p^2}$.

Note that the set of nonzero $p$th powers modulo $p^2$ is a cyclic subgroup of $(\Z/p^2\Z)^\times$ of order $p-1$. In any given isomorphism between this subgroup and $\Z/(p-1)\Z$, we have no reason to believe that the images (discrete logarithms) of $x$ and $x-1$ are correlated. Therefore, we assume that the expected number of such $k$ is equal to the expected number of $k\mod{p-1}$ such that $ky\equiv z\mod{p-1}$, where $y$ and $z$ are chosen independently uniformly from $\Z/(p-1)\Z$. Indeed, the remainder of our analysis does not depend upon the fact that the modulus is one less than a prime, and so we determine the expected number of $k\mod N$ such that $ky\equiv z\mod N$, where $y$ and $z$ are chosen independently uniformly from $\Z/N\Z$.

Given $y$ and $z$, the congruence $ky\equiv z\mod N$ has no solutions $k$ unless $(y,N)$ divides $z$, in which case it has $(y,N)$ solutions modulo~$N$. For every divisor $d$ of $N$, exactly $\phi(d)$ of the $N$ residue classes $y\mod N$ such that $(y,N)=d$; given $d$, the probability that $d\mid z$ is exactly $\frac1d$. Therefore the expected number of solutions to the congruence is
\[
\sum_{d\mid N} \frac{\phi(d)}N \cdot \frac1d \cdot d = \frac1N \sum_{d\mid N} \phi(d) = 1.
\]
Combining this calculation with our heuristic that there is one pair of consecutive nonzero $p$th powers modulo $p^2$ on average completes our justification of Conjecture~\ref{Cp average conjecture}.
\end{proof}

We conclude by remarking that similar methods can be applied to the problem of estimating how often $D_\pm(n,m)$ is squarefree, as both $n$ and $m$ vary. Since $D_\pm(n,m)$ is trivially not squarefree when $m$ and $n$ share a common factor, the natural quantity to investigate is the limiting proportion of relatively prime pairs $(n,m)$ for which $D_\pm(n,m)$ is squarefree. The third author \cite{thom} has carried out calculations and heuristics, similar to those presented in this section, suggesting that this proportion is between $92\%$ and $94\%$. However, the two-dimensional nature of the problem constrained the amount of computation that could be done directly, thereby limiting the precision of the estimates for the proportion.

\section*{Acknowledgments}

The genesis of this work took place at the 1999 and 2000 Western Number Theory Conferences in Asilomar and San Diego, respectively; we thank the organizers of those conferences and their problem sessions, particularly Gerry Myerson and Jeff Achter for disseminating progress on these problems. We also thank Carlo Beenakker and Cam Stewart for helping us locate elementary examples of $abc$ triples in the literature, and Carl Pomerance for his suggestion that improved and simplified Proposition~\ref{abc pomerance proposition}.


\begin{thebibliography}{99}

\bibitem{doraisklyve}
F.G. Dorais and D. Klyve, ``A Wieferich prime search up to $6.7\times10^{15}$'', {\em J. Integer Seq.} 14 (2011), no. 9, \#11.9.2 (14 pp).

\bibitem{esmondemurty}
J. Esmonde and R. Murty, {\em Problems in Algebraic Number Theory}, 2nd ed., Springer--Verlag, New York, 2005.

\bibitem{feller}
W. Feller, {\em An Introduction to Probability Theory and Its Applications}, 3rd. ed., John Wiley \& Sons, New York, 1968.

\bibitem{GKZ}
I.M. Gel'fand, M.M. Kapranov, and A.V. Zelevinsky, {\em Discriminants, Resultants, and Multidimensional Determinants}, Mathematics: Theory \&\ Applications, Birkh\"auser, Boston, 1994.

\bibitem{heathbrown}
D. R. Heath--Brown, ``An estimate for Heilbronn's exponential sum'', {\em Analytic Number Theory: Proceedings of a conference in honor of Heini Halberstam}, Birkha\"user, Boston, 1996, 451--463.

\bibitem{helou}
C. Helou, ``Cauchy--Mirimanoff polynomials'', {\em C. R. Math. Rep. Acad. Sci. Canada} {\bf19} (1997), no. 2, 51--57.

\bibitem{kedlaya}
K. Kedlaya, ``A construction of polynomials with squarefree discriminants'', {\em Proc. Amer. Math. Soc.} {\bf140} (2012), no. 9, 3025--3033.

\bibitem{lagarias}
J. Lagarias, problem 99:10 in ``Western number theory problems, 16 \& 19 Dec 1999'', Asilomar, CA (ed. G. Myerson), http://www.math.colostate.edu/$\sim$achter/wntc/problems/problems2000.pdf.

\bibitem{lang}
S. Lang, ``Old and new conjectured Diophantine inequalities'', {\em Bull.\ Amer.\ Math.\ Soc. (N.S.)} {\bf 23}  (1990),  no.\ 1, 37--75.
 
\bibitem{ljunggren}
W. Ljunggren, ``On the irreducibility of certain trinomials and quadrinomials'', {\em Math. Scand.} {\bf8} (1960), 65--70.

\bibitem{mirimanoff}
D. Mirimanoff, ``Sur l'\'equation $(x + 1)^l - x^l - 1 = 0$'', {\em Nouv. Ann. Math.} {\bf3} (1903), 385--397.

\bibitem{mitkin}
D.A. Mit'kin, ``An estimate for the number of roots of some comparisons by the Stepanov method'' (Russian), {\em Mat. Zametki} {\bf51} (1992), no. 6, 52--58, 157; translation in {\em Math. Notes} {\bf51} (1992), no. 5--6, 565--570.

\bibitem{montgomeryvaughn}
H.L. Montgomery and R.C. Vaughan, {\em Multiplicative Number Theory I: Classical Theory}, Cambridge University Press, Cambridge, 2007.

\bibitem{Os}
H. Osada, ``The Galois groups of the polynomials $X^n+aX^l+b$'', {\em J. Number Theory} {\bf25} (1987), no. 2, 230--238.

\bibitem{Rib}
P. Ribenboim, {\em The New Book of Prime Number Records}, Springer--Verlag, New York, 1996.

\bibitem{13}
P. Ribenboim, {\em 13 Lectures on FermatÕs Last Theorem}, Springer--Verlag, New York, 1979.

\bibitem{Sil}
J. H. Silverman, Wieferich's criterion and the $abc$-conjecture, J. Number Theory 30 (1988), 226--237.

\bibitem{primegrid}
R. Slatkevi\v cius, PrimeGrid web site, http://www.primegrid.com; statistics on Wieferich prime search, http://prpnet.mine.nu:13000.

\bibitem{ST}
C.L. Stewart and R. Tijdeman, ``On the Oesterl\'e--Masser conjecture'', {\em Monatsh.\ Math.} {\bf102} (1986), no.\ 3, 251--257.

\bibitem{swan}
R. Swan, ``Factorization of polynomials over finite fields'', {\em Pacific J. Math.} {\bf12} (1962), 1099--1106.

\bibitem{thom}
M. Thom, ``Square-free trinomial discriminants'', M.Sc.\ essay, University of British Columbia, Vancouver, 2011 (26 pp).

\bibitem{Tz}
P. Tzermias, ``On Cauchy--Liouville--Mirimanoff polynomials'', {\em Canad. Math. Bull.} {\bf50} (2007), no. 2, 313--320.

\end{thebibliography}
\end{document}